\numberwithin{equation}{section}
\theoremstyle{plain}
\newtheorem{theorem}[]{Theorem}
\newtheorem*{theorem*}{Theorem}
\newtheorem{proposition}[equation]{Proposition}
\newtheorem{corollary}[equation]{Corollary}
\newtheorem*{corollary*}{Corollary}
\newtheorem{lemma}[equation]{Lemma}
\newtheorem*{lemma*}{Lemma}
\theoremstyle{definition}
\newtheorem{definition}[equation]{Definition}
\theoremstyle{remark}
\newtheorem{remark}[equation]{Remark}
\newtheorem*{remark*}{Remark}
\newcommand*{\E}{\mathbb{E}}
\newcommand*{\N}{\mathbb{N}}
\newcommand*{\Pb}{\mathbb{P}}
\newcommand*{\R}{\mathbb{R}}
\newcommand*{\Z}{\mathbb{Z}}
\newcommand{\1}{\mathds{1}}
\newcommand{\cL}{\mathcal{L}}
\newcommand{\cO}{\mathcal{O}}
\newcommand{\cR}{\mathcal{R}}
\newcommand{\cS}{\mathcal{S}}
\newcommand{\sE}{\mathscr{E}}
\newcommand{\dd}{\mathrm{d}}
\renewcommand{\geq}{\geqslant}
\renewcommand{\leq}{\leqslant}
\begin{document}

\begin{frontmatter}
\title{On the One-Dimensional Contact \\ Process with Enhancements}
\runtitle{On the One-Dimensional Contact Process with Enhancements}

\begin{aug}
\author[A]{\fnms{Enrique}~\snm{Andjel}%
}
\and
\author[B]{\fnms{Leonardo~T.}~\snm{Rolla}%
}
\address[A]{Aix-Marseille University}
\address[B]{University of São Paulo}
\end{aug}

\begin{abstract}
We study a one-dimensional contact process with two infection parameters, one giving the infection rates at the boundaries of a finite infected region and the other one the rates within that region. We prove that the critical value of each of these parameters is a strictly monotone continuous function of the other parameter. We also show that if one of these parameters is equal to the critical value of the standard contact process and the other parameter is strictly larger, then the infection starting from a single point has positive probability of surviving. This is in contrast with another result also obtained here, that the critical contact process on the half line with enhanced infection rate at finitely many sites also dies out.
\end{abstract}

\begin{keyword}[class=MSC]
\kwd[Primary ]{60K35}
\kwd{82B43}
\kwd[; secondary ]{60D05}
\end{keyword}

\begin{keyword}
\kwd{Contact process}
\kwd{Percolation}
\end{keyword}

\end{frontmatter}

\section{Introduction}

In this paper we consider a one-dimensional contact process with modified boundaries. This model was introduced by Durrett and Schinazi in~\cite{DurrettSchinazi00}. It differs from the standard one-dimensional contact process only in the way the rightmost (leftmost) infected site transmits its infection to its right (left) nearest neighbor. These infection rates are given by a parameter $\lambda_e$ while infection everywhere else occurs at rate $\lambda_i$, where $e$ and $i$ refer to external and internal (as usual, we assume that the recovery rate is $1$).
When $ \lambda_e=\lambda_i=\lambda $, this process reduces to the case of standard contact process, and we refer the reader to~\cite{Griffeath81,Liggett05,Valesin24} for its basic properties.
(At the end of this introduction, we also consider another variant of the process.)

Let $ \theta(\lambda_i,\lambda_e) $ denote the probability that, starting with only the origin infected, the infection survives throughout time.

Although comparisons between different pairs $ (\lambda_i,\lambda_e) $ on the parameter space are not immediate, they can be compared with points $ (\lambda_i,\lambda_i) $ and $ (\lambda_e,\lambda_e) $ through any standard graphical construction (we describe one in the next section).
Using this fact, that the critical contact process dies out~\cite{BezuidenhoutGrimmett90}, and comparing the boundary of the process with a simple random walk, one immediately gets the picture of the phase space shown in Figure~\ref{fig:phasespace2a}.

It turns out that $ \theta(\lambda_i,\lambda_e) $ 
is non-decreasing in both parameters. We note that although this property is very intuitive, its proof is not immediate~\cite[Proposition~1]{DurrettSchinazi00}.
So we define
\begin{align*}
\lambda_c
&:= \inf\{\lambda : \theta(\lambda,\lambda)>0 \}
= \sup\{\lambda : \theta(\lambda,\lambda)=0 \}
,
\\
\lambda_*^i(\lambda_e)
&:= \inf\{\lambda : \theta(\lambda,\lambda_e)>0 \}
=
\sup\{\lambda : \theta(\lambda,\lambda_e)=0 \}
,
\\
\lambda_*^e(\lambda_i)
&:= \inf\{\lambda : \theta(\lambda_i,\lambda)>0 \}
=
\sup\{\lambda : \theta(\lambda_i,\lambda)=0 \}
.
\end{align*}

The research program reported here started with the question of whether $ \lambda_*^i(\lambda_e) $ would be strictly larger than $ \lambda_c $ for every $ \lambda_e<\lambda_c $.
This question resembles a typical question of enhancement: does a decrease in $ \lambda_e $, however small, have a strong enough effect on the dynamics so as to cause the critical value of $ \lambda_i $ to increase?
For percolation systems with two parameters, there are well established techniques that can often be used to prove that the critical value for one of the parameters is a strictly monotone function of the other parameter (see~\cite{AizenmanGrimmett91} and~\cite[Section~3.3]{Grimmett99}).
However, these techniques are much better adapted to the non-oriented case, and will usually break down for the contact process and other oriented models.
For instance, in the classic contact process, we are not aware of a proof that the critical parameter is strictly decreasing as we increase the spatial dimension (although $\lambda_c \to 0$ as $d\to\infty$~\cite{Griffeath83,HolleyLiggett81}).
Enhancement arguments being of no help, we have to rely on other methods which depend on the one-dimensional properties of our processes.

\begin{figure}[b!]
\hfil
\subfloat[]{\includegraphics[page=1,width=0.33\textwidth]{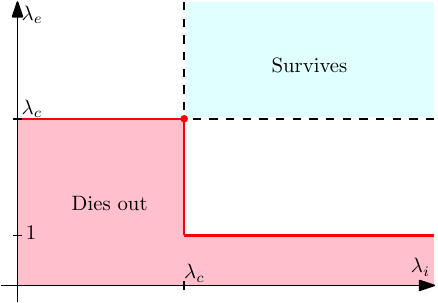}\label{fig:phasespace2a}}%
\hfil
\subfloat[]{\includegraphics[page=2,width=0.33\textwidth]{phasespace}\label{fig:phasespace2b}}%
\hfil
\subfloat[]{\includegraphics[page=3,width=0.33\textwidth]{phasespace}\label{fig:phasespace2c}}%
\hfil
\caption{(a) Simple properties of the phase space: $ \theta(\lambda_i,\lambda_e)>0 $ if $ \min\{\lambda_i,\lambda_e\}>\lambda_c $ and $ \theta(\lambda_i,\lambda_e)=0 $ if $ \max\{\lambda_i,\lambda_e\} \leq \lambda_c $ or $ \lambda_e \leq 1 $.
(b) Results of~\cite{DurrettSchinazi00}. (c) Results of this paper.
\\ \hspace*{\fill} (color~online)}
\end{figure}

The following were proved in~\cite{DurrettSchinazi00} and are illustrated in Figure~\ref{fig:phasespace2b}.
\begin{theorem}
For $\lambda_e > \lambda_c$, we have $\lambda_*^i(\lambda_e) = \lambda_c$.
\end{theorem}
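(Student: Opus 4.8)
The plan is to prove the inequalities $\lambda_*^i(\lambda_e)\le\lambda_c$ and $\lambda_*^i(\lambda_e)\ge\lambda_c$ separately. The first is already contained in Figure~\ref{fig:phasespace2a}: if $\lambda_i>\lambda_c$ then, since also $\lambda_e>\lambda_c$, one has $\min\{\lambda_i,\lambda_e\}>\lambda_c$, so on a common graphical construction the modified process started from a single site dominates a standard contact process with parameter $\min\{\lambda_i,\lambda_e\}$, which survives with positive probability; hence $\theta(\lambda_i,\lambda_e)>0$ for every $\lambda_i>\lambda_c$, i.e.\ $\lambda_*^i(\lambda_e)\le\lambda_c$.

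The substance of the theorem is the reverse inequality: for every $\lambda_i<\lambda_c$ (and the given $\lambda_e>\lambda_c$) the infection started from the origin must die out. I would reduce this to the assertion that the rightmost infected site $r_t$ does not escape to $+\infty$ and, symmetrically, the leftmost site $l_t$ does not escape to $-\infty$: once both hold, the infection is eventually confined to a bounded interval, where it is a continuous-time Markov chain on a finite state space with the empty configuration absorbing and accessible, so it dies out and $\theta(\lambda_i,\lambda_e)=0$. Since infections cross only nearest-neighbour edges, $\sup_t r_t$ is the largest $m\ge 1$ such that site $m$ is ever infected, and site $m$ can become infected only through site $m-1$ at an instant at which $m-1$ is the rightmost infected site; so it suffices to show that $p_m:=\Pb(\text{site }m\text{ is ever infected})$ tends to $0$ (the events in question are nested, so this gives $\Pb(\sup_t r_t=\infty)=0$).

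To bound $p_m$ I would use the record times $\sigma_m$ (the first time $m$ is infected) and estimate $\Pb(\sigma_{m+1}<\infty\mid\sigma_m<\infty)$. At time $\sigma_m$ the infection lives on $(-\infty,m]$ with $m$ rightmost, and until $m+1$ is reached the site $m+1$ is infected at rate at most $\lambda_e$ and only while $m$ stays rightmost; hence this conditional probability is at most $\E[1-e^{-\lambda_e T}]$, where $T$ is the total time $m$ remains the rightmost infected site afterwards. What should make $T$ small is that, restricted to $(-\infty,m]$, the evolution is a \emph{subcritical} contact process of rate $\lambda_i$ (plus the left-boundary dynamics, which only carry mass further left): a subcritical contact process dies out exponentially fast and has range with exponential tails, so the cloud behind the edge cannot continually push mass back to $m$; one thus expects $T$ to have an exponential moment bounded uniformly in $m$ (and in the configuration at time $\sigma_m$), which gives $\E[1-e^{-\lambda_e T}]\le\rho<1$ and hence $p_m\le\rho^{m}\to0$.

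The main obstacle is precisely this uniform bound: at time $\sigma_m$ the configuration to the left of $m$ need not be small, so one cannot simply compare with a subcritical process issued from a single site. I would deal with this by following, instead of the whole configuration, only the distance from the rightmost site back to a suitable ``renewal point'' --- obtained by cutting the edge trajectory at the (a.s.\ infinitely many) times when the stretch of sites immediately behind $r_t$ is empty --- and then estimating, block by block, the net displacement of the edge by coupling with a genuinely subcritical contact process on a half-line to which a controlled amount of extra boundary activity has been added, using the exponential estimates for the subcritical process to show the displacement has strictly negative mean per block. Arranging the renewal structure so that the blocks are genuinely i.i.d.\ despite the moving boundary, and quantifying the interaction of the extra $\lambda_e$-arrows with the subcritical bulk, is where the real difficulty lies. (When $\lambda_e\le1$ all of this can be bypassed by dominating the edge directly by a random walk with non-positive drift, as noted in the introduction; but for $\lambda_c<\lambda_e$ this shortcut is unavailable and the interplay with the subcritical interior is essential.)
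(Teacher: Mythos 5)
Your first half is fine: on the common graphical construction every $\lambda$-open edge with $\lambda=\min\{\lambda_i,\lambda_e\}$ is both $\lambda_i$- and $\lambda_e$-open, so the modified process contains a standard contact process with rate $\min\{\lambda_i,\lambda_e\}$ and $\lambda_*^i(\lambda_e)\leq\lambda_c$ follows. Be aware, though, that the paper does not prove this theorem at all — it is quoted from \cite{DurrettSchinazi00} — so your argument has to stand on its own, and for the hard direction it does not.

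The genuine gap is the uniform estimate $\E[1-e^{-\lambda_e T}]\leq\rho<1$ over all possible configurations at the record time $\sigma_m$: everything preceding it (nestedness of the events, confinement plus finite-state absorption, the fact that before $\sigma_{m+1}$ site $m$, when infected, is rightmost and infects $m+1$ at rate $\lambda_e$) is routine, but this estimate carries essentially the full weight of the theorem and your sketch does not establish it. Concretely: (i) in the regime $\lambda_i<\lambda_c<\lambda_e$ the process is not attractive (it is attractive only for $\lambda_i\geq\lambda_e$, Lemma~\ref{lemma:attractive} and the remark following it), so you cannot declare the fully occupied half-line to be the worst configuration behind the edge, nor invoke monotonicity or subadditivity to make the bound uniform in the state at $\sigma_m$; (ii) the cloud behind the frontier is not merely a subcritical rate-$\lambda_i$ cluster — its own left edge infects at rate $\lambda_e>\lambda_c$, so a surviving supercritical mechanism is present to the left, and ruling out that it repeatedly re-feeds site $m$ is precisely the interaction you defer; (iii) the proposed renewal times (an empty stretch immediately behind $r_t$) are not shown to occur with positive density, and even at such times the configuration further left is not forgotten, so the blocks are not i.i.d.\ without substantial extra work; (iv) no negative-drift-per-block estimate is actually proved. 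As written, the proposal is an outline whose decisive step is missing; completing it would require a comparison/renormalization argument of the strength of the one in \cite{DurrettSchinazi00} (or of the block type used in Section~\ref{sec:critnospeed} of this paper), not just exponential estimates for a subcritical contact process started from a single site.
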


\begin{theorem}
\label{thm:ds}
For $\lambda_e > 1$, we have $\lambda_*^i(\lambda_e) < \infty$.
\end{theorem}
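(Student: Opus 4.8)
The plan is to run a block construction, comparing the process for large $\lambda_i$ with a supercritical finite--range--dependent oriented percolation: the large parameter that drives the construction is $\lambda_i$, while the hypothesis $\lambda_e>1$ supplies the outward drift of the infected region that keeps it alive. A cheap reduction lets us replace the single infected origin by a fully infected interval. Indeed, from one infected site there is, for every fixed $L$ and every $\lambda_i$, a positive--probability event on which the infection spreads out from $0$ in both directions and fills $\{-L,\dots,L\}$ during a unit time interval with no recovery there (a prescribed ordered finite family of infection and non--recovery marks in the graphical construction); so it suffices to exhibit some $L$ and some finite $\lambda_i$ for which the process started from $\{-L,\dots,L\}$ fully infected survives with positive probability.

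For the block construction, fix blocks $I_m=\{2mL-L,\dots,2mL+L\}$ and a time step $T$, and attach to each site $(m,n)$ of the renormalised lattice $\{(m,n):n\ge0,\ m+n\text{ even}\}$ the event $G_{m,n}$ that, running the dynamics inside a space--time box $[2mL-KL,2mL+KL]\times[nT,(n+1)T]$ (with $K$ a fixed constant, and with all infection entering from outside the box suppressed), started at time $nT$ from ``$I_m$ fully infected, everything else healthy'', one ends at time $(n+1)T$ with both $I_{m-1}$ and $I_{m+1}$ fully infected. For $\lambda_i\ge\lambda_e$ the suppression only decreases the process in a monotone coupling, so on the event that the associated oriented percolation started at $(0,0)$ survives, the true process started from $I_0$ fully infected keeps some $I_m$ fully infected at infinitely many times $nT$ and hence does not die out. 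The events $\{G_{m,n}\}$ are $R$--dependent for a finite $R=R(K)$, so by the standard comparison theorem for dependent oriented percolation it suffices to show $\Pb(G_{m,n})\ge 1-\epsilon_0(R)$; in fact I would show $\Pb(G_{m,n})\to1$ as $\lambda_i\to\infty$, for a suitable choice of $L$ and of $T=T(\lambda_e,L)$, which finishes the proof.

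The remaining and genuinely delicate task is this last estimate; by translation invariance take $m=n=0$. I would follow the rightmost infected site $r_t$ of the restricted process: it increases by $1$ at rate $\lambda_e$, and at each recovery of $r_t$ (rate $1$) it drops by $1+h_t$, where $h_t$ is the length of the healthy stretch immediately behind $r_t$, so $r_t$ has drift $\lambda_e-1-\E[h_t]$. The core of the argument is to show that, for $\lambda_i$ large, $h_t$ is typically $0$ and only exponentially rarely large, \emph{uniformly over the whole interval $[0,T]$}: the two endpoints of any maximal healthy stretch inside the infected envelope perform, while the stretch is nonempty, nearest--neighbour walks whose combined drift shrinks the stretch at rate at least $2(\lambda_i-1)$, so a given hole reaches length $k$ with probability at most $(C/\lambda_i)^{k}$ and disappears within time $O(1/\lambda_i)$; summing this subcritical--branching--type bound over the $O(LT)$ nucleation events gives, with probability tending to $1$ as $\lambda_i\to\infty$, that $\E[h_t]\le\epsilon$ throughout $[0,T]$ and that any site that has lain inside the envelope for a unit of time is healthy at a given later instant only with probability $O(1/\lambda_i)$. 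Granting this, $r_t$ dominates a random walk of mean $(\lambda_e-1-\epsilon)T$ with uniformly bounded increment variance; with $T=CL/(\lambda_e-1)$ for $C$ large and $\epsilon$ small, a Chebyshev estimate gives $r_t\ge3L$ for all $t\in[T/2,T]$ with probability close to $1$, and symmetrically $\ell_t\le-3L$ for the leftmost site. On that event $I_{-1}\cup I_{1}\subset[\ell_t,r_t]$ throughout $[T/2,T]$, so each of the $O(L)$ sites there has been inside the envelope for a time $\gg 1/\lambda_i$ and is therefore infected at time $T$ with probability $1-O(1/\lambda_i)$; a union bound yields $\Pb(G_{0,0})\ge 1-O(L/\lambda_i)\to1$. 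The main obstacle is exactly the uniform--in--time hole control: the horizon $T$ has order $L$, not $O(1/\lambda_i)$, so a large hole must be excluded at \emph{every} instant of a long interval, and it is this that forces $\lambda_i$ to be large; the inequality $\lambda_e>1$ enters only afterwards, to make the front drift positive so that $I_{\pm1}$ are actually reached by time $T$.
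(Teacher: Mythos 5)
The paper itself gives no proof of this statement: Theorem~\ref{thm:ds} is quoted from Durrett and Schinazi~\cite{DurrettSchinazi00}, so the only meaningful comparison is with their original argument, and your proposal follows essentially that route — for large $\lambda_i$ the infected region is, up to rare short-lived holes, an interval whose edges drift outward at rate $\lambda_e-1>0$ (the $\lambda_i=\infty$ heuristic), converted into survival through a finite space-time block event and a comparison with finite-range-dependent oriented percolation via~\cite{LiggettSchonmannStacey97}. Your two monotonicity ingredients (suppressing infection entering the box, and enlarging the initial configuration at the start of each block) are both valid exactly because you work in the regime $\lambda_e\le\lambda_i$, where the graphical construction with $\lambda_e$-open edges also $\lambda_i$-open preserves inclusions; this correctly sidesteps the non-locality issue that forces the paper to introduce ``open branches'' in its own block argument (Section~\ref{sec:critnospeed}), which you do not need here since your block events are defined directly through the restricted dynamics rather than through open paths. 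The crux you single out, the uniform-in-time hole control, is in fact the easy part once $L$ and $T$ are fixed and $\lambda_i\to\infty$: an interior hole is refilled at rate at least $2\lambda_i$ while it grows at rate at most $2$, so the expected number of holes ever reaching length $2$ in the box during $[0,T]$ is $O(LT/\lambda_i)$, and the expected amount of time during which the site just behind the right edge is healthy is $O(T/\lambda_i)$; hence the correction to the drift $\lambda_e-1$ and the probability of ever seeing a large hole both vanish as $\lambda_i\to\infty$, and your Chebyshev estimate for the edge displacement plus the final union bound over $I_{\pm1}$ give $\Pb(G_{0,0})\to1$ as claimed. In short, the proposal is correct in outline and coincides in spirit with the proof in the cited reference; what remains is routine bookkeeping (hole merging, conditioning at edge-recovery times, keeping the front away from the box boundary by taking $K$ large), none of which threatens the argument.
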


\begin{corollary}
$\lim_{\lambda_i \rightarrow \infty} \lambda^e_*(\lambda_i)=1$.
\end{corollary}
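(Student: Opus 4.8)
The plan is to pin down the limit by sandwiching $\lambda_*^e(\lambda_i)$ between $1$ and $1+\varepsilon$ for every $\varepsilon>0$ once $\lambda_i$ is taken large, the lower bound coming from the random-walk comparison and the upper bound being a direct repackaging of Theorem~\ref{thm:ds}. For the lower bound, recall the elementary fact recorded in Figure~\ref{fig:phasespace2a}: whenever $\lambda_e\le 1$ one has $\theta(\lambda_i,\lambda_e)=0$, because the rightmost infected site is dominated by a continuous-time walk that jumps right at rate at most $1$ and left (or dies) at rate $1$, hence cannot escape to $+\infty$. Consequently $1\in\{\lambda:\theta(\lambda_i,\lambda)=0\}$, so $\lambda_*^e(\lambda_i)\ge 1$ for \emph{every} $\lambda_i$, and in particular $\liminf_{\lambda_i\to\infty}\lambda_*^e(\lambda_i)\ge 1$. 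This step also shows the supremum defining $\lambda_*^e(\lambda_i)$ is over a nonempty set, so $\lambda_*^e(\lambda_i)$ is well defined.

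For the upper bound, fix $\varepsilon>0$. Since $1+\varepsilon>1$, Theorem~\ref{thm:ds} gives $\lambda_*^i(1+\varepsilon)<\infty$; using that $\theta$ is non-decreasing in its first argument, this means there is a finite $\Lambda(\varepsilon)$ such that $\theta(\lambda_i,1+\varepsilon)>0$ for all $\lambda_i>\Lambda(\varepsilon)$. For any such $\lambda_i$, monotonicity of $\theta$ in $\lambda_e$ together with the definition $\lambda_*^e(\lambda_i)=\sup\{\lambda:\theta(\lambda_i,\lambda)=0\}$ forces $\lambda_*^e(\lambda_i)\le 1+\varepsilon$ (in particular $\lambda_*^e(\lambda_i)$ is finite for $\lambda_i>\Lambda(1)$). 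Hence $\limsup_{\lambda_i\to\infty}\lambda_*^e(\lambda_i)\le 1+\varepsilon$, and letting $\varepsilon\downarrow 0$ yields $\limsup_{\lambda_i\to\infty}\lambda_*^e(\lambda_i)\le 1$. Combining the two bounds gives $\lim_{\lambda_i\to\infty}\lambda_*^e(\lambda_i)=1$.

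There is no substantive obstacle here: all of the analytic content sits inside Theorem~\ref{thm:ds} and inside the random-walk domination, and the corollary merely translates ``$\lambda_*^i(\lambda_e)<\infty$ for $\lambda_e>1$'' into a statement about the inverse function $\lambda_*^e$. The only point requiring a little care is that $\lambda_*^e(\lambda_i)$ is not asserted (or needed) to be monotone in $\lambda_i$, so the limit must be extracted through the $\liminf$/$\limsup$ squeeze above rather than from a monotone-convergence shortcut; and one should keep the direction of monotonicity of $\theta$ straight when passing between $\theta>0$ regions and the infima/suprema that define $\lambda_*^i$ and $\lambda_*^e$.
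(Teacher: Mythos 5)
Your argument is correct and is essentially the derivation the paper intends: the corollary is stated as an immediate consequence of Theorem~\ref{thm:ds} together with the elementary facts behind Figure~\ref{fig:phasespace2a} ($\theta(\lambda_i,\lambda_e)=0$ for $\lambda_e\leq 1$ by the random-walk comparison, plus monotonicity of $\theta$), which is exactly your squeeze between $1$ and $1+\varepsilon$.
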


We now state the main results of this paper, illustrated in Figure~\ref{fig:phasespace2c}.

The first and second theorems below state that the critical value of $\lambda_i$ is a strictly decreasing function of $\lambda_e$ in the relevant region, and vice versa.
As a consequence, the critical value of 
either parameter is a continuous function of the other parameter.

\begin{theorem}
\label{thm:noverticals}
The function
$\lambda_*^i(\lambda_e)$ is strictly decreasing for $\lambda_e \in (1,\lambda_c]$.
\end{theorem}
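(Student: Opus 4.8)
The plan is to reduce the theorem to a survival property of a near‑critical process and then establish that property through a one‑dimensional analysis of the right edge.

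\textbf{Reduction.} Since $\theta$ is non-decreasing in each argument \cite{DurrettSchinazi00}, $\lambda_*^i$ is automatically non-increasing, so what is at stake is that it is not constant on any nondegenerate subinterval of $(1,\lambda_c]$. Fixing $1<\lambda_e<\lambda_e'\le\lambda_c$, it suffices to exhibit one $\lambda_i<\lambda_*^i(\lambda_e)$ with $\theta(\lambda_i,\lambda_e')>0$, for then $\lambda_*^i(\lambda_e')\le\lambda_i<\lambda_*^i(\lambda_e)$. Choosing $\lambda_i$ just below $\lambda_*^i(\lambda_e)$, the task becomes: \emph{a process that is barely subcritical at boundary rate $\lambda_e$ survives once that rate is raised to $\lambda_e'$}. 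Any such $\lambda_i$ must satisfy $\lambda_i>\lambda_c$, since $\lambda_e'\le\lambda_c$ forbids $\max\{\lambda_i,\lambda_e'\}\le\lambda_c$ (otherwise $\theta(\lambda_i,\lambda_e')=0$ by comparison with the standard critical contact process). So one must first establish the strict separation $\lambda_*^i(\lambda_e)>\lambda_c$ --- itself a death statement (a modified contact process with interior rate only slightly above $\lambda_c$ and boundary rate below $\lambda_c$ should see its edge drift to $-\infty$), to be handled by the same edge analysis as below --- after which $\lambda_i$ may be taken with $\lambda_c<\lambda_i<\lambda_*^i(\lambda_e)$; note that then the bulk of the relevant processes is a genuinely supercritical contact process.

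\textbf{Main steps.} Work in the graphical construction and couple the processes at boundary rates $\lambda_e<\lambda_e'$ (common interior rate $\lambda_i$) so that passing from $\lambda_e$ to $\lambda_e'$ only superimposes boundary arrows: the infected sets are then ordered, and the right edge $r_t$ of the $\lambda_e'$-process receives \emph{unit rightward advances arriving at rate $\lambda_e'-\lambda_e$}, at every instant and whatever the configuration. I would analyse this edge with a Kuczek-type renewal decomposition --- space--time cones from which the future of the edge is independent of the past, yielding i.i.d.\ displacements and durations between regeneration times --- and conclude that the process from a single site survives with positive probability once the mean displacement per cycle is strictly positive. To produce that positive mean one combines: (i) as $\lambda_i>\lambda_c$, a long infected segment just behind the edge is refilled and kept alive with probability bounded away from $0$ over a time window comparable to its length, so the edge is continually re-supplied; and (ii) taking $\lambda_i$ close to $\lambda_*^i(\lambda_e)$ makes such segments persist for a very long time under the unaugmented dynamics, while the added boundary arrows push the edge forward at a fixed positive rate that the original dynamics cannot absorb. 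This should give a strictly positive mean displacement for the $\lambda_e'$-process, hence $\theta(\lambda_i,\lambda_e')>0$, completing the reduction.

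\textbf{Main obstacle.} Step (ii) is the crux. The extra advances arrive at a fixed positive rate, but the right edge can retreat by an unbounded amount whenever its leading site recovers, so a positive rate of forward kicks does not of itself yield positive speed. One must control these retreats --- showing that a macroscopic backward jump of the edge forces a macroscopic gap right behind it, which a supercritical interior makes exponentially unlikely on the scales that matter --- and balance them against the forward push inside a renewal scheme whose regeneration times have finite mean. Carrying this through with constants that do not deteriorate as $\lambda_i\uparrow\lambda_*^i(\lambda_e)$, and noting that the whole scheme rests on the range $\lambda_c<\lambda_i<\lambda_*^i(\lambda_e)$ being nonempty --- which fails exactly for $\lambda_e>\lambda_c$, where $\lambda_*^i\equiv\lambda_c$ and no strict decrease holds --- is where the real difficulty lies.
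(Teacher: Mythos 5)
Your reduction (fix $1<\lambda_e<\lambda_e'\le\lambda_c$, couple so that raising the boundary rate only superimposes extra rightward kicks at rate $\lambda_e'-\lambda_e$, and conclude from a positive drift of the right edge) follows the same skeleton as the paper, but both load-bearing steps are left as assertions, and one of them is exactly the difficulty the paper was written to overcome. Your step (ii) --- that the extra kicks ``cannot be absorbed'' by the original dynamics --- is not an auxiliary estimate but the whole theorem in disguise, and you concede in your last paragraph that you do not know how to prove it. The paper's proof of the corresponding statement, Proposition~\ref{prop:speedincreasese} ($\alpha(\lambda_i,\lambda_e+\varepsilon)\ge\alpha(\lambda_i,\lambda_e)+\varepsilon$), has no counterpart in your sketch: it starts the process from the stationary law $\mu$ of the configuration seen from the rightmost site (Proposition~\ref{prop:existsinv}), and the crucial Lemma~\ref{lemma:pathdomination} shows that adding one site at the edge of a $\mu$-distributed configuration stochastically dominates shifting the whole configuration by one unit; restarting from a dominated $\mu$-sample at each ring of the $\varepsilon$-clock then converts every kick into a full unit of expected displacement, uniformly in how close the parameters are to the critical curve. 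Your alternative route (exponential control of macroscopic retreats of the edge, with constants uniform as $\lambda_i\uparrow\lambda_*^i(\lambda_e)$) runs head-on into the degeneration you yourself flag: near the critical curve the unaugmented drift vanishes and no fixed quantitative gap survives, so ``persist for a very long time'' does not convert into a positive mean displacement per cycle. In addition, your scheme needs $\lambda_*^i(\lambda_e)>\lambda_c$ as an input to make the interval $(\lambda_c,\lambda_*^i(\lambda_e))$ nonempty; that strict inequality is the Corollary that the paper \emph{deduces} from Theorem~\ref{thm:noverticals}, and deferring it to ``the same edge analysis'' is deferring to another unproven enhancement-type statement of the same nature.

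The second gap is the regeneration device itself. Kuczek-type break points are built either conditionally on survival (circular here, since survival at $(\lambda_i,\lambda_e')$ is what you want to prove) or from additivity and positive edge speed of the unmodified supercritical process; the boundary-modified process is attractive for $\lambda_e\le\lambda_i$ but not additive, and it is non-local: whether an arrow at the edge is usable depends on the entire configuration, so the ``future independent of the past'' property of a candidate regeneration cone is not automatic and is nowhere justified, especially since the unaugmented process at $(\lambda_i,\lambda_e)$ with $\lambda_i<\lambda_*^i(\lambda_e)$ dies out. Relatedly, you pass silently from ``positive mean displacement per cycle'' to ``survival starting from a single site''; for this non-local process that implication is itself one of the paper's main technical contributions, handled not by edge renewal but by the open-branch and grafting construction and the renormalization of Section~\ref{sec:critnospeed} (Theorem~\ref{thm:critnospeed}, via Lemmas~\ref{lemma:crossing} and~\ref{lemma:block}), combined with the semicontinuity and nonnegativity statements (Lemmas~\ref{lemma:rightcontinuous} and~\ref{lemma:nonnegative}). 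So the proposal identifies a plausible outline but leaves both of its essential steps --- the strict speed increase and the passage from edge drift to survival --- as conjectures rather than proofs.
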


In particular, we answer the original question affirmatively:
\begin{corollary}
For $ \lambda_e < \lambda_c $, we have $\lambda_*^i(\lambda_e) > \lambda_c$.
\end{corollary}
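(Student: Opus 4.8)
The plan is to obtain the corollary as an immediate consequence of Theorem~\ref{thm:noverticals}, together with the two elementary facts recorded in Figure~\ref{fig:phasespace2a}. I would first separate out the sub-range $\lambda_e \leq 1$: there, comparing the boundary of the infected interval with a (sub)critical nearest-neighbour random walk shows $\theta(\lambda_i,\lambda_e) = 0$ for every $\lambda_i$, so $\lambda_*^i(\lambda_e) = +\infty > \lambda_c$ trivially. It therefore remains to handle $\lambda_e \in (1,\lambda_c)$.

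For that range I would use the anchor bound $\lambda_*^i(\lambda_c) \geq \lambda_c$. Indeed, if $\lambda_i \leq \lambda_c$ then the pair $(\lambda_i,\lambda_c)$ has $\max\{\lambda_i,\lambda_c\} = \lambda_c$, and under the graphical construction of the next section the modified-border process with these parameters is dominated by the standard contact process with parameter $\lambda_c$, which dies out~\cite{BezuidenhoutGrimmett90}; hence $\theta(\lambda_i,\lambda_c) = 0$, so $\lambda_*^i(\lambda_c) \geq \lambda_c$. Now $\lambda_e$ and $\lambda_c$ both lie in $(1,\lambda_c]$ with $\lambda_e < \lambda_c$, so by the strict monotonicity in Theorem~\ref{thm:noverticals},
\[
\lambda_*^i(\lambda_e) > \lambda_*^i(\lambda_c) \geq \lambda_c ,
\]
which is the claim. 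Together with the case $\lambda_e \leq 1$ this covers all $\lambda_e < \lambda_c$.

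At this point there is no genuine obstacle left: the entire content of the corollary is carried by Theorem~\ref{thm:noverticals}, and what remains is bookkeeping. The only two points needing attention are treating $\lambda_e \leq 1$ on its own (where $\lambda_*^i = +\infty$ rather than a finite value), and applying strictness at an interior comparison point by invoking the easy bound $\lambda_*^i(\lambda_c) \geq \lambda_c$ rather than an unneeded claim that equality holds there.
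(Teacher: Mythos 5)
Your proof is correct and follows essentially the same route as the paper, which presents this corollary as an immediate consequence of Theorem~\ref{thm:noverticals} together with the elementary facts of Figure~\ref{fig:phasespace2a} (namely $\theta=0$ when $\lambda_e\leq 1$ and when $\max\{\lambda_i,\lambda_e\}\leq\lambda_c$, the latter giving the anchor $\lambda_*^i(\lambda_c)\geq\lambda_c$). You have merely spelled out the bookkeeping that the paper leaves implicit, which is fine.
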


\begin{theorem}
\label{thm:nohorizontals}
The function
$\lambda_*^e(\lambda_i)$ is strictly decreasing on $[\lambda_c,+\infty)$.
\end{theorem}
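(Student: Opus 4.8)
First I would note that the statement only has content on $[\lambda_c,+\infty)$: for $\lambda_i<\lambda_c$ the first theorem of~\cite{DurrettSchinazi00}, together with monotonicity of $\theta$ in $\lambda_e$, gives $\lambda_*^i(\lambda_e)\geq\lambda_c$ for every $\lambda_e$, hence $\theta(\lambda_i,\lambda_e)=0$ for all $\lambda_e$ and $\lambda_*^e(\lambda_i)=+\infty$, whereas for $\lambda_i>\lambda_c$ the same theorem gives $\theta(\lambda_i,\lambda_e)>0$ whenever $\lambda_e>\lambda_c$, so $\lambda_*^e(\lambda_i)\leq\lambda_c$. On $[\lambda_c,+\infty)$ the function $\lambda_*^e$ is already non-increasing (because $\theta$ is non-decreasing in $\lambda_i$), so what must be shown is that it has no horizontal plateau. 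It is worth recording the duality behind this: a plateau of $\lambda_*^e$ at a height $h\in(1,\lambda_c]$ is exactly a jump of $\lambda_*^i$ at $h$, so — granting Theorem~\ref{thm:noverticals} — the present theorem is equivalent to the continuity of $\lambda_*^i$ on $(1,\lambda_c]$; in either formulation the content is a quantitative perturbation estimate.

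The plan is to argue through the front. For the process started from $(-\infty,0]\cap\Z$ infected let $r_t$ be its rightmost infected site and let $\alpha(\lambda_i,\lambda_e)=\lim_{t\to\infty}r_t/t$ be the asymptotic front speed; this limit exists by Proposition~\ref{prop:existsinv}, it depends continuously on $\lambda_e$, and, as for the standard contact process, $\alpha>0$ is equivalent to survival from a finite seed, with $\alpha(\lambda_i,\lambda_*^e(\lambda_i))=0$ for $\lambda_i\geq\lambda_c$. I would then establish the following quantitative strict monotonicity: for $\lambda_i\geq\lambda_c$, $\lambda_i'>\lambda_i$ and any $\lambda_0>1$ there is $c=c(\lambda_i,\lambda_i',\lambda_0)>0$ such that $\alpha(\lambda_i',\lambda_e)\geq\alpha(\lambda_i,\lambda_e)+c$ for every supercritical pair $(\lambda_i,\lambda_e)$ with $\lambda_e\geq\lambda_0$. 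This excludes plateaus. Suppose $\lambda_*^e$ were equal to a constant $h$ on a non-degenerate interval $I\subseteq[\lambda_c,+\infty)$; then $h\in(1,\lambda_c]$, the value $h=1$ being excluded because a sufficiently weak border cannot sustain the infection at any fixed finite $\lambda_i$ — the backward jumps of the front exceed one with a positive density when $\lambda_i<\infty$ — which is itself part of what the theorem asserts and is read off from the same analysis of the process seen from the front. Now pick $\lambda_i<\lambda_i'$ in $I$ and $\lambda_0\in(1,h)$; for $\lambda_e>h$ the pair $(\lambda_i,\lambda_e)$ is supercritical, so the estimate gives $\alpha(\lambda_i',\lambda_e)\geq\alpha(\lambda_i,\lambda_e)+c$, and letting $\lambda_e\downarrow h$ we obtain $\alpha(\lambda_i',h)\geq c>0$, so $(\lambda_i',h)$ is supercritical — contradicting $\lambda_*^e(\lambda_i')=h$.

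It remains to explain the coupling behind the quantitative estimate, which is the crux. Realise the $(\lambda_i,\lambda_e)$ and $(\lambda_i',\lambda_e)$ processes on a common graphical construction, the second one carrying in addition an independent family of interior infection arrows of rate $\lambda_i'-\lambda_i$, so that its infection dominates the first. The source of a strict gain is that these extra arrows re-infect sites immediately behind the front, so that at an instant when the current rightmost infected site recovers the front of the $(\lambda_i',\lambda_e)$ process falls back by strictly less than that of the $(\lambda_i,\lambda_e)$ process; using the stationary law of the process seen from the front — which for a supercritical pair $(\lambda_i,\lambda_e)$ assigns to configurations with a vacant site immediately behind the rightmost infected one a density that is bounded below uniformly over $\lambda_e\geq\lambda_0$ — one shows that the space--time rate of instants at which an extra arrow genuinely shortens a backward jump is bounded below, and a renewal (or block/restart) argument converts this local improvement into the asserted uniform additive gain in speed. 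The main obstacle is exactly this last passage: upgrading a local, coupling-level improvement into a macroscopic one that does not shrink to zero as $\lambda_e$ is lowered towards $\lambda_*^e(\lambda_i)$, where the front of the slower process grinds almost to a halt. Here the one-dimensional structure is indispensable — the front is a genuine coordinate admitting a stationary description, and the supercritical contact process regenerates — whereas the enhancement techniques that resolve such questions for unoriented percolation do not transfer to this oriented model.
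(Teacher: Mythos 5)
Your top-level reduction is the same as the paper's: pass to the right edge of the process started from $\Z_-$, use that the speed vanishes on the critical curve, prove that increasing $\lambda_i$ strictly increases the speed, and conclude by (right) continuity. But the two pillars you lean on are exactly the hard content, and the proposal does not supply them. First, ``$\alpha>0$ is equivalent to survival from a finite seed, as for the standard contact process'' cannot be imported: this model is neither additive nor local (whether an edge transmits depends on the configuration far away), so the classical path/duality arguments break down; the paper devotes Section~\ref{sec:critnospeed} to a block argument with ``open branches'' precisely to prove that, in the attractive regime, $\alpha(\lambda_i,\lambda_e)>0$ implies survival --- and in fact the sharper statement $\alpha>0\Rightarrow\lambda_e>\lambda_*^e(\lambda_i)$, which is also what your final step needs: survival at $(\lambda_i',h)$ alone does not contradict $\lambda_*^e(\lambda_i')=h$, since the infimum could a priori be attained. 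Combined with Lemmas~\ref{lemma:rightcontinuous} and~\ref{lemma:nonnegative} (note only \emph{right} continuity of $\alpha$ in $\lambda_e$ is available, not the continuity you assert; and existence of the speed comes from subadditivity, Lemma~\ref{lemma:speed}, not Proposition~\ref{prop:existsinv}), this yields Theorem~\ref{thm:critnospeed}, which you also invoke without proof.

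Second, and this is the crux, your quantitative estimate $\alpha(\lambda_i',\lambda_e)\ge\alpha(\lambda_i,\lambda_e)+c$ uniformly over supercritical pairs with $\lambda_e\ge\lambda_0$ is asserted rather than proved: the mechanism you sketch (a uniform lower bound on the density of a vacant site just behind the front under the stationary front law, then ``a renewal argument'' upgrading the local gain) is exactly the step you yourself flag as the main obstacle, and it degenerates precisely where it is needed, as $\lambda_e\downarrow\lambda_*^e(\lambda_i)$, where the slower front's speed tends to zero and no uniform regeneration estimate is given. The paper avoids any uniform difference bound and works directly at $\lambda_e=\lambda_*^e(\lambda_i)$ (Proposition~\ref{prop:speedincint}): (i) the one-sided modified process, with rate $\lambda_e$ only at the right edge and $\lambda_i$ elsewhere, survives with probability $\theta_*>0$ (Lemma~\ref{lemma:survivalint}, whose proof uses zero speed at criticality together with $\alpha(\lambda_i,\lambda_i)>0$); (ii) adding one site ahead of a semi-infinite configuration increases $\E[\cR\eta_t]$ by at least $\theta_*$ for \emph{every} later time (Lemma~\ref{expinc}, via an explicit coupling); (iii) in each unit time interval the extra interior arrows put the faster front at least one site ahead with probability $\delta>0$ uniformly in the configuration. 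Accumulating gives $\E[\cR\xi^-_t]\ge\E[\cR\eta^-_t]+n\delta\theta_*$ for $t\ge n$, hence $\alpha(\lambda_i',\lambda_*^e(\lambda_i))\ge\delta\theta_*>0$, and the theorem follows. Until you prove your uniform gain (or this weaker pointwise positivity), the proposal has a genuine gap at its core; your separate exclusion of a plateau at height $1$ is likewise only gestured at, whereas in the paper $\lambda_*^e(\lambda_i)>1$ is a corollary of the theorem, not an input.
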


\begin{corollary}
For all $\lambda_i < \infty$ we have $\lambda_*^e(\lambda_i) > 1$.
\end{corollary}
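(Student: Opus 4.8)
The plan is to obtain this as an immediate consequence of Theorem~\ref{thm:nohorizontals}, by combining strict monotonicity with the trivial lower bound $\lambda_*^e(\lambda_i) \geq 1$, valid for \emph{every} $\lambda_i$. That bound holds because $\theta(\lambda_i,\lambda_e)=0$ whenever $\lambda_e \leq 1$, so the set $\{\lambda : \theta(\lambda_i,\lambda)=0\}$ always contains an interval of the form $(0,1]$ and hence has supremum at least $1$; in particular there is no issue with the definition of $\lambda_*^e(\lambda_i)$ being vacuous.

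First I would handle $\lambda_i \in [\lambda_c,\infty)$. For such $\lambda_i$ the point $\lambda_i+1$ also lies in $[\lambda_c,\infty)$, so Theorem~\ref{thm:nohorizontals} gives the strict inequality $\lambda_*^e(\lambda_i) > \lambda_*^e(\lambda_i+1)$; since $\lambda_*^e(\lambda_i+1) \geq 1$ by the observation above, we conclude $\lambda_*^e(\lambda_i) > 1$. In other words, a strictly decreasing function on $[\lambda_c,\infty)$ that is bounded below by $1$ cannot attain the value $1$. In particular $\lambda_*^e(\lambda_c) > 1$.

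Next, for $\lambda_i < \lambda_c$, I would reduce to the previous case using that $\theta$ is non-decreasing in its first argument: if $\theta(\lambda_c,\lambda)=0$ then $\theta(\lambda_i,\lambda) \leq \theta(\lambda_c,\lambda)=0$, so $\{\lambda : \theta(\lambda_i,\lambda)=0\} \supseteq \{\lambda : \theta(\lambda_c,\lambda)=0\}$ and therefore $\lambda_*^e(\lambda_i) \geq \lambda_*^e(\lambda_c) > 1$. I do not expect any real obstacle here: all the substance is carried by Theorem~\ref{thm:nohorizontals}, and the corollary is just the remark that strict monotonicity upgrades the obvious bound $\lambda_*^e \geq 1$ to a strict one, together with a one-line comparison to cover interior rates below $\lambda_c$.
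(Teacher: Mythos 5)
Your argument is correct and is essentially the one the paper intends: the corollary is stated as an immediate consequence of Theorem~\ref{thm:nohorizontals}, upgrading the trivial bound $\lambda_*^e \geq 1$ (from $\theta(\lambda_i,\lambda_e)=0$ for $\lambda_e\leq 1$) to a strict one via strict monotonicity on $[\lambda_c,\infty)$, and your reduction of the case $\lambda_i<\lambda_c$ via monotonicity of $\theta$ in $\lambda_i$ (or, even more simply, via $\theta=0$ when $\max\{\lambda_i,\lambda_e\}\leq\lambda_c$) is exactly the kind of one-line comparison the paper takes for granted.
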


\begin{corollary}
\label{cor:critint}
For $ \lambda_e = \lambda_c $, we have $\lambda_*^i(\lambda_e) = \lambda_c$.
\end{corollary}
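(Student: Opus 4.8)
The plan is to prove the two inequalities $\lambda_*^i(\lambda_c)\geq\lambda_c$ and $\lambda_*^i(\lambda_c)\leq\lambda_c$ separately. The first is elementary: for any $\lambda_i\leq\lambda_c$ one has $\max\{\lambda_i,\lambda_c\}=\lambda_c$, so the elementary comparison recalled in the introduction (see Figure~\ref{fig:phasespace2a}), together with the extinction of the critical contact process~\cite{BezuidenhoutGrimmett90}, gives $\theta(\lambda_i,\lambda_c)\leq\theta(\lambda_c,\lambda_c)=0$; hence $\lambda_*^i(\lambda_c)\geq\lambda_c$.

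For the reverse inequality I would argue by contradiction. Suppose $\lambda_*^i(\lambda_c)>\lambda_c$, and pick any $\lambda_i$ in the nonempty open interval $(\lambda_c,\lambda_*^i(\lambda_c))$. Since $\lambda_i<\lambda_*^i(\lambda_c)$ and $\theta$ is non-decreasing in its first coordinate, $\theta(\lambda_i,\lambda_c)=0$, and monotonicity in the second coordinate then yields $\theta(\lambda_i,\lambda_e)=0$ for every $\lambda_e\leq\lambda_c$. On the other hand, for every $\lambda_e>\lambda_c$ the result of~\cite{DurrettSchinazi00} that $\lambda_*^i(\lambda_e)=\lambda_c$ gives $\lambda_i>\lambda_c=\lambda_*^i(\lambda_e)$, whence $\theta(\lambda_i,\lambda_e)>0$. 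These two facts force $\lambda_*^e(\lambda_i)=\lambda_c$. As $\lambda_i$ was arbitrary in $(\lambda_c,\lambda_*^i(\lambda_c))$, the function $\lambda_*^e$ would be constant, equal to $\lambda_c$, on this nondegenerate subinterval of $[\lambda_c,+\infty)$, contradicting the strict monotonicity supplied by Theorem~\ref{thm:nohorizontals}. Therefore $\lambda_*^i(\lambda_c)\leq\lambda_c$, and the two bounds together give the claim.

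I do not expect a serious obstacle here: once Theorem~\ref{thm:nohorizontals} is available the corollary is a short deduction, and the only point requiring care is the bookkeeping of the $\inf$ and $\sup$ in the definitions in the presence of merely non-strict monotonicity of $\lambda_*^i$ and $\lambda_*^e$ near the junction $\lambda_e=\lambda_c$. The contradiction is extracted not from a putative ``vertical jump'' of $\lambda_*^i$ at $\lambda_c$ directly, but from the flat piece that such a jump would force on the ``inverse'' function $\lambda_*^e$, which is precisely the quantity controlled by Theorem~\ref{thm:nohorizontals}; by contrast, Theorem~\ref{thm:noverticals} alone does not suffice, since it constrains $\lambda_*^i$ only on $(1,\lambda_c]$ and rules out no downward jump at the right endpoint $\lambda_c$ down to the value $\lambda_c$ that already holds throughout $(\lambda_c,+\infty)$.
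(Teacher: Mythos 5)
Your proof is correct and takes essentially the route the paper intends: the corollary is stated without a separate proof as a consequence of Theorem~\ref{thm:nohorizontals}, and your contradiction argument is just the contrapositive of the direct deduction (for any $\lambda_i>\lambda_c$, strict decrease of $\lambda_*^e$ on $[\lambda_c,\infty)$ together with the elementary bound $\lambda_*^e(\lambda_i')\leq\lambda_c$ for $\lambda_c<\lambda_i'<\lambda_i$ forces $\lambda_*^e(\lambda_i)<\lambda_c$, i.e.\ $\theta(\lambda_i,\lambda_c)>0$). Your lower bound $\lambda_*^i(\lambda_c)\geq\lambda_c$ via the standard coupling and the extinction of the critical contact process~\cite{BezuidenhoutGrimmett90} is exactly the elementary fact recorded in Figure~\ref{fig:phasespace2a}.
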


The following corollary is a consequence of Theorems~\ref{thm:noverticals} and~\ref{thm:nohorizontals}.
For the first two parts, note that if one of the functions $\lambda^e_*$ and $\lambda^i_*$
is strictly monotone the other one must be continuous. Then these functions are inverses of each other and the last two parts of the corollary follow.
\begin{corollary}
\label{continuity1}
The critical curve has the following properties.
\begin{enumerate}[$ (i) $]
\item
The function $\lambda^e_*:[\lambda_c,\infty) \rightarrow (1,\lambda_c]$ is continuous.
\item
The function $\lambda^i_*:(1,\lambda_c] \rightarrow [\lambda_c,\infty)$ is continuous.
\item
$\lim_{\lambda_e \to 1} \lambda^i_*(\lambda_e)=+\infty$.
\item
For every $a \in [\lambda_c,\infty) $ and $b \in (1,\lambda_c]$, $\lambda^e_*(a)=b$ if and only if $\lambda^i_*(b)=a$.
\end{enumerate}
\end{corollary}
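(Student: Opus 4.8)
The plan is to extract two soft facts about the threshold functions and then bootstrap them into the full inverse relationship; once that is in hand, continuity is automatic. Write $f:=\lambda^i_*$ on $(1,\lambda_c]$ and $g:=\lambda^e_*$ on $[\lambda_c,\infty)$. First I would record that these are genuine real-valued functions with the stated codomains: $f(\lambda_e)<\infty$ for $\lambda_e>1$ by Theorem~\ref{thm:ds}; since $f$ is strictly decreasing (Theorem~\ref{thm:noverticals}) with $f(\lambda_c)=\lambda_c$ (Corollary~\ref{cor:critint}), we get $f\colon(1,\lambda_c]\to[\lambda_c,\infty)$; likewise $g$ is strictly decreasing (Theorem~\ref{thm:nohorizontals}) with $g(\lambda_c)=\lambda_c$ (Theorem~\ref{thm:helpout}) and $g(\lambda_i)\to 1$ as $\lambda_i\to\infty$ (the corollary recalled above), so $g\colon[\lambda_c,\infty)\to(1,\lambda_c]$. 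From the monotonicity of $\theta(\cdot,\cdot)$ in each coordinate and the definition of $f,g$ as infima (equivalently suprema), one obtains, for all $\lambda_i\in[\lambda_c,\infty)$ and $\lambda_e\in(1,\lambda_c]$,
\[
\lambda_i>f(\lambda_e)\ \Longrightarrow\ \theta(\lambda_i,\lambda_e)>0\ \Longrightarrow\ \lambda_e\geq g(\lambda_i),
\qquad
\lambda_i<f(\lambda_e)\ \Longrightarrow\ \theta(\lambda_i,\lambda_e)=0\ \Longrightarrow\ \lambda_e\leq g(\lambda_i),
\]
together with the analogous pair of implications obtained by interchanging the two coordinates.

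The heart of the argument is part~$(iv)$. Fix $a\in[\lambda_c,\infty)$ and $b\in(1,\lambda_c]$, and suppose $g(a)=b$; I claim $f(b)=a$. If $f(b)>a$, pick $a'\in(a,f(b))$ (note $a'>a\geq\lambda_c$, so $a'$ lies in the domain of $g$); since $a'<f(b)$, the second displayed implication gives $g(a')\geq b$, while strict monotonicity of $g$ and $a'>a$ give $g(a')<g(a)=b$, a contradiction. If $f(b)<a$, pick $a'\in(f(b),a)$ (note $a'>f(b)\geq\lambda_c$); since $a'>f(b)$, the first implication gives $g(a')\leq b$, while $g(a')>g(a)=b$, again a contradiction. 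Hence $f(b)=a$. The reverse implication $f(b)=a\Rightarrow g(a)=b$ is proved identically, using instead the strict monotonicity of $f$ and the two interchanged implications; the boundary instance $b=\lambda_c$ (which forces $a=\lambda_c$, and conversely) is immediate from $f(\lambda_c)=g(\lambda_c)=\lambda_c$ and strict monotonicity. This proves~$(iv)$.

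Given~$(iv)$, the remaining items are soft. Part~$(iv)$ says precisely that $f$ and $g$ are mutually inverse; since each is injective (being strictly monotone) and each composite is the identity, $f$ and $g$ are monotone bijections between the intervals $(1,\lambda_c]$ and $[\lambda_c,\infty)$, hence homeomorphisms — a monotone bijection between intervals cannot have a jump, since a jump would make it skip an entire subinterval of values and violate surjectivity. This gives~$(i)$ and~$(ii)$. For~$(iii)$: as $f$ is decreasing on $(1,\lambda_c]$, the limit $\lim_{\lambda_e\downarrow 1}f(\lambda_e)=\sup_{\lambda_e\in(1,\lambda_c]}f(\lambda_e)$ exists in $(\lambda_c,+\infty]$; but by~$(iv)$ every $a\in[\lambda_c,\infty)$ equals $f(g(a))$, so the range of $f$ is all of $[\lambda_c,\infty)$ and this supremum is $+\infty$.

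The only genuine obstacle is making~$(iv)$ watertight: one has to notice that it is the strict monotonicity of \emph{both} threshold functions — Theorems~\ref{thm:noverticals} and~\ref{thm:nohorizontals} — that upgrades the two soft, non-strict implications above into an exact inverse relation, and one must check that the comparison points $a'$ used in the proof remain inside the domain $[\lambda_c,\infty)$ of $g$ (respectively $(1,\lambda_c]$ of $f$), which they do because they are chosen between $f(b)\geq\lambda_c$ and $a$, or on the side of $a$ away from $\lambda_c$. Everything else is a routine property of monotone functions.
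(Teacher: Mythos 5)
Your proposal is correct and takes essentially the approach the paper intends: the corollary is stated there without proof as an immediate consequence of Theorems~\ref{thm:noverticals} and~\ref{thm:nohorizontals}, and your argument is the natural fleshing-out of exactly those ingredients, deriving the inverse relation in part $(iv)$ from the threshold definitions plus strict monotonicity of both critical functions (together with the endpoint and codomain facts from Theorem~\ref{thm:ds}, Corollary~\ref{cor:critint} and Theorem~\ref{thm:helpout}), and then obtaining $(i)$--$(iii)$ from the resulting monotone bijection between intervals. I see no gaps, including in the domain checks for the comparison points $a'$ and $b'$.
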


\begin{remark}
In the course of proving Theorems~\ref{thm:noverticals} and~\ref{thm:nohorizontals}, we show existence and uniqueness
of a stationary state for the process seen from the rightmost infected site, as well as convergence to such state, for supercritical pairs of parameters
(Proposition~\ref{prop:existsinv}).
\end{remark}

Corollary~\ref{cor:critint} is somewhat expected, it says that a process with supercritical parameter in the bulk and critical parameter at the boundary will survive with positive probability.
Perhaps more surprising is its counterpart that we state now.

\begin{theorem}
\label{thm:helpout}
For $ \lambda_i = \lambda_c $, we have $\lambda_*^e(\lambda_i) = \lambda_c$.
\end{theorem}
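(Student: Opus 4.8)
\emph{Plan.} Since the simple properties in Figure~\ref{fig:phasespace2a} give $ \theta(\lambda_c,\lambda_e)=0 $ for every $ \lambda_e\leq\lambda_c $, we already have $ \lambda_*^e(\lambda_c)\geq\lambda_c $, and what must be proved is that $ \theta(\lambda_c,\lambda_e)>0 $ for each fixed $ \lambda_e>\lambda_c $. The plan is to do this, as for the standard contact process, by showing that the right edge of the $ (\lambda_c,\lambda_e) $ process started from the half-line $ (-\infty,0] $ moves out with strictly positive speed. Once this is known, by left--right symmetry the left edge of the process started from $ [0,+\infty) $ also has positive speed, and then the usual comparison of the process started from a large interval with a supercritical oriented percolation — run over a time window of order $ L $ (the length of the starting interval), which is far shorter than the survival time of the critical bulk of that size — followed by a finite-energy step from the interval to a single site, gives $ \theta(\lambda_c,\lambda_e)>0 $; all the comparisons here are of the kind already used in the paper and rest on the monotonicity of~\cite[Proposition~1]{DurrettSchinazi00}. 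So let $ \xi_t $ be the $ (\lambda_c,\lambda_e) $ process from $ (-\infty,0] $, put $ r_t:=\max\xi_t $, and note that $ \xi_t $ never becomes empty, by the usual finite-speed-of-propagation argument for a process started from an infinite configuration. A subadditivity argument for the edge, as in the standard contact process, then gives $ r_t/t\to\beta(\lambda_e) $ almost surely, and the whole theorem reduces to the claim $ \beta(\lambda_e)>0 $.

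To attack $ \beta(\lambda_e)>0 $ I would work with the process seen from its rightmost site, $ \hat\xi_t:=\xi_t-r_t $, a Markov process on the infinite subsets of $ (-\infty,0] $ that contain the origin. Reading off its dynamics, $ r_t $ moves only by $ +1 $, at rate $ \lambda_e $ (the rightmost site infecting its right neighbour), and by $ -G_t $, at rate $ 1 $, where $ G_t\geq1 $ is the distance from $ r_t $ down to the next occupied site; internal infections never change $ r_t $. Hence, if one shows that $ \hat\xi_t $ is positive recurrent with a stationary law $ \pi $ under which $ G $ is integrable and — crucially — $ \E_\pi[G]<\lambda_e $, then by the ergodic theorem $ r_t/t\to\lambda_e-\E_\pi[G]=\beta(\lambda_e)>0 $. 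So the task becomes to show that the environment seen from the edge does not degenerate and that, on average, the rightmost site (which recovers at rate $ 1 $) falls back by strictly less than $ \lambda_e $. This is the one place where $ \lambda_i=\lambda_c $ enters: for $ \lambda_i<\lambda_c $ the column of infection behind the front is drained exponentially fast and $ \E_\pi[G] $ would be large (one expects no non-degenerate $ \pi $ at all for $ \lambda_e $ near $ \lambda_c $), whereas at $ \lambda_i=\lambda_c $ this column decays only polynomially, so any $ \lambda_e>\lambda_c $ keeps re-feeding it and holds $ \E_\pi[G] $ below $ \lambda_e $.

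The main obstacle is precisely this non-degeneracy of the near-edge environment. Proposition~\ref{prop:existsinv} provides such a stationary environment only for supercritical pairs, so a new argument is needed, and the rates at which the leading sites of the infection advance versus are cut off depend on the law of the near-edge environment itself, which makes a naive balance estimate circular. I expect the way out to be a multiscale bootstrap: first prove, on a coarse scale, that the front does not recede too quickly, then feed this into a renormalisation to upgrade it to a genuine advance on a finer scale, and iterate until the estimate closes on itself. An alternative packaging runs the argument at $ \lambda_i>\lambda_c $, where Proposition~\ref{prop:existsinv} applies and $ \beta(\lambda_i,\lambda_e)>0 $, and lets $ \lambda_i\downarrow\lambda_c $: since $ \theta(\cdot,\lambda_e) $ is non-decreasing and, being the infimum over $ t $ of the continuous maps $ \lambda_i\mapsto\Pb_{(\lambda_i,\lambda_e)}(\xi_t\neq\emptyset) $ with $ \xi_0=\{0\} $, upper semicontinuous, hence right-continuous, in its first argument, one has $ \theta(\lambda_c,\lambda_e)=\lim_{\lambda_i\downarrow\lambda_c}\theta(\lambda_i,\lambda_e) $, and the point is to show this decreasing limit is strictly positive — equivalently, that $ \E_{\pi_{\lambda_i,\lambda_e}}[G] $ stays bounded away from $ \lambda_e $ as $ \lambda_i\downarrow\lambda_c $ with $ \lambda_e $ held fixed. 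This is the same difficulty seen from the other side: in whatever formulation, the crux is to control the edge environment uniformly while the bulk parameter sits exactly at $ \lambda_c $.
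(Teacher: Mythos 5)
There is a genuine gap, and it sits at both ends of your argument. First, your reduction to ``$r_t/t\to\beta(\lambda_e)$ by subadditivity, as for the standard contact process'' is not available here: for $\lambda_e>\lambda_i$ the process is \emph{not} attractive (see the remark after Lemma~\ref{lemma:attractive} and the opening of Section~\ref{sec:nonattractive}), there is no coupling with $\xi^A_t\subseteq\xi^B_t$ for $A\subseteq B$, and the Subadditive Ergodic Theorem machinery of Section~\ref{sec:speedetc} breaks down — it is not even known that the edge has an asymptotic speed in this regime. Likewise, the closing step from positive edge speed to $\theta(\lambda_c,\lambda_e)>0$ via a block/oriented-percolation comparison relies on the open-branch construction of Section~\ref{sec:critnospeed}, which is also confined to $\lambda_e\leq\lambda_i$; the monotonicity of $\theta$ from~\cite[Proposition~1]{DurrettSchinazi00} is a statement about survival probabilities, not a pathwise coupling, so it cannot substitute for the missing attractiveness in either step.

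Second, and decisively, the existence of a non-degenerate stationary edge environment $\pi$ at $\lambda_i=\lambda_c$, $\lambda_e>\lambda_c$ with $\E_\pi[G]<\lambda_e$ \emph{is} the theorem: Proposition~\ref{prop:existsinv} does not cover this regime (the paper lists this as an open problem), and your proposed ``multiscale bootstrap'', or equivalently the uniform control of $\E_{\pi_{\lambda_i,\lambda_e}}[G]$ as $\lambda_i\downarrow\lambda_c$, is exactly the hard content and is left unproved. The paper avoids both obstacles by never analysing the non-attractive process's own edge environment. It restarts at each ring of the rate-$\varepsilon$ clock: using Lemma~\ref{lemma:pathdomination} it thins the configuration back to the invariant measure $\mu$ of the \emph{critical attractive} process seen from the right edge~\cite{CoxDurrettSchinazi91}, so that the edge increments $Y_n$ between rings form a stationary integrable sequence with $\E Y_n=1$ (here Proposition~\ref{prop:critinvspeedzero}, that the critical process started from $\mu$ has zero speed, supplies the drift computation). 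The Ergodic Theorem then yields $\liminf_t \cR\xi_t/t>0$ with positive probability, and extinction is excluded because on the extinction event $\xi$ eventually coincides with a $(\lambda_c,\lambda_c)$ process, which has zero speed. To repair your proof you would need either to establish the stationarity-and-drift estimate at criticality directly, or to adopt a restart scheme of this kind.
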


We make a few remarks about the above theorem.

\begin{remark}
\label{rmk:abitmore}
We will in fact prove something slightly stronger: if the underlying graph is $\Z_+$ instead of $\Z$, and the infection rate is $ \lambda_c + \varepsilon $ at the right-hand side boundary and $ \lambda_c $ everywhere else, then the process survives.
\end{remark}

\begin{remark}
Even though the theorem says that the process survives for $ \lambda_i=\lambda_c $ and $ \lambda_e>\lambda_c $, the process does not have a non-trivial invariant measure.
Indeed, if it had such a measure, it would be supported on configurations that have infinitely many occupied sites in both directions and would thus be invariant for the standard contact process with critical parameter, contradicting a.s.\ extinction established in~\cite{BezuidenhoutGrimmett90}.
\end{remark}

\begin{remark}
The critical contact process seen from the rightmost infected site has a unique stationary state~\cite{CoxDurrettSchinazi91}.
In the course of proving Theorem~\ref{thm:helpout}, we prove that the critical contact process started from this distribution has zero asymptotic speed (Proposition~\ref{prop:critinvspeedzero}).
\end{remark}

\begin{remark}
\label{rmk:zhang1}
It is conjectured that the critical contact process dies out even if we change the infection rate to an arbitrarily large value at a finite number of edges.
This conjecture is supported by an analogous result in non-oriented percolation~\cite{Zhang94}.
In contrast, the above theorem says that, if at each time we increase the infection rate by $ \varepsilon>0 $ at a specific, dynamically chosen, pair of edges (those at the boundary of the infected interval), then the process survives with positive probability.
\end{remark}

Regarding the conjecture mentioned in Remark~\ref{rmk:zhang1}, we obtain a partial result to be contrasted with Theorem~\ref{thm:helpout} and Remark~\ref{rmk:abitmore}.

\begin{theorem}
\label{thm:fixenhancement}
Consider the standard contact process on $\Z_+$, with recovery rate $1$ and infection rate $\lambda_c$ at all but finitely many sites, and having at finitely many sites a recovery rate $\delta>0$ and an infection rate $\lambda<\infty$ arbitrary but fixed. This process dies out a.s.
\end{theorem}

\begin{remark}
\label{rmk:zhang2}
Our proof relies strongly on the underlying graph being $\Z_+$ rather than $\Z$.
This creates one of those situations where, although the full conjecture cannot be mathematically established, partial results make its negation more and more implausible (as is the case for the $\theta(p_c)=0$ conjecture in Bernoulli percolation).
In the present context, the above theorem says that, for the analogous process on $\Z$, if the infection survives then \emph{every} infinite infection path has to visit sites that are arbitrarily far from the origin \emph{in both directions}.
\end{remark}

We briefly mention some challenges faced in proving Theorem~\ref{thm:helpout}.
Since this is no longer in the regime $ \lambda_e \leq \lambda_i $, the process under consideration is not attractive.
So there is no simple way to compare different configurations.
As an anecdote, we had previously obtained several ``proofs'' for Theorem~\ref{thm:helpout} that turned out to have subtle flaws in them.
In order to prove that the process with parameters $ \lambda_i = \lambda_c $ and $ \lambda_e = \lambda_c + \varepsilon $ survives, we compare it with a process with parameters $ \lambda_e = \lambda_i = \lambda_c $, which is attractive and has zero speed (Proposition~\ref{prop:critinvspeedzero}).
Due to lack of attractiveness, we cannot use subadditivity to study growth speed either, and, in fact, it is not known whether the process has an asymptotic speed at all.
To overcome these problems, we perform a kind of ``restart'' each time an extra infected site is added to the former process.
At each such restart, we perform a thinning of the previous process, obtaining a configuration distributed as the invariant measure for the critical process seen from the rightmost infected site~\cite{CoxDurrettSchinazi91}, the domination between the concerned measures being provided by Proposition~\ref{prop:pathdomination}.
This sequence of restarted critical processes is carefully constructed so as to make it stationary on the one hand, and in some sense comparable with the invariant measure on the other hand.
From stationarity, we can conclude that this auxiliary process has a speed a.s.,\ and that the speed is positive with non-zero probability.
From there we conclude that the non-restarted process has positive lower speed with positive probability, which fortunately is enough to conclude that the process survives.

The remaining sections are organized as follows.
Section~\ref{sec:notation} introduces the notation and some basic
definitions.
Section~\ref{sec:speedetc} contains results concerning the asymptotic speed in the attractive regime $\lambda_e \leq \lambda_i$.
In Section~\ref{sec:edge} we obtain a series of results for the process starting from semi-infinite configurations seen from the boundary, some of which may be of independent interest.
In Section~\ref{sec:critnospeed} we show that the asymptotic speed is $0$ when the pair $(\lambda_i, \lambda_e)$ is critical.
In Sections~\ref{sec:speedincreasese} and~\ref{sec:intinc}, we complete the proof of Theorems~\ref{thm:noverticals} and~\ref{thm:nohorizontals}.
Section~\ref{sec:nonattractive} is devoted to the proof of Theorem~\ref{thm:helpout}; it uses results from Section~\ref{sec:edge} but it is otherwise independent of the other sections as it considers the non-attractive regime.
Section~\ref{sec:survn} expands the arguments of Section~\ref{sec:nonattractive} to prove Remark~\ref{rmk:abitmore}.
Section~\ref{sec:zhang} is independent of all the others and gives a proof of Theorem~\ref{thm:fixenhancement}.
Section~\ref{sec:openproblems} states some open problems.

\section{Terminology, definitions and notation}
\label{sec:notation}

Configurations are
subsets of $ \Z $
represented by capital letters $ A $ and $ B $ if they are fixed, or $\eta, \zeta, \xi$ if they are random.
The set of configurations is denoted $ \Sigma = \mathcal{P}(\Z) \cong \{0,1\}^\Z $.
The set of semi-infinite configurations (only to the left) is denoted $ \Sigma^\ominus $.
The shift to the right is defined as $ T A = A + 1 \subseteq \Z $.
For $ A \in \Sigma $, we define $ \cR A = \sup A $ and $ \cL A = \inf A $.
(On $\Sigma$ and its subspaces, we consider the topology corresponding to the product topology of $\{0,1\}^\Z$.)




Throughout this paper we use an enlarged version of the standard graphical construction of the contact process, which can handle all values of $ \lambda_i $, $ \lambda_e $ and $ \varepsilon $ simultaneously.
The relevant features of this graphical construction are that each space-time point $ (x,t) $ can have a \emph{recovery mark} and each oriented edge from $ (x,t) $ to $ (x \pm 1, t) $ can be \emph{$ \lambda $-open} for a given value of $ \lambda $, in a way that $ \lambda $-open edges are $ \lambda' $-open for all $ \lambda' \geq \lambda $.
We also have an extra clock of rate $\varepsilon$ which is not attached to any specific site (it can be used to account for an increase in $\lambda_e$).
We let $ \Pb $ denote an underlying probability measure in a space where these elements are defined.

Here is a possible implementation.
Sample for each $ x\in\Z $ a PPP (Poisson Point Process) $ \omega_x \subseteq (0,+\infty) $ of intensity $ 1 $.
We say that there is a recovery mark at $ (x,t) $ if $ t \in \omega_x $.
For each pair $ (x , x \pm 1) $, sample a PPP $ \omega_{x,x\pm 1} \subseteq (0,+\infty) \times (0,+\infty) $ with intensity $ 1 $, and we say that the oriented edge going from $ (x,t) $ to $ (x \pm 1,t) $ is $ \lambda $-open if $ \omega_{x,x\pm 1} \cap \big(\{t\} \times (0,\lambda]\big) \ne \emptyset $.
Finally sample a PPP $ \omega_\varepsilon \subseteq (0,+\infty) $ of intensity $ \varepsilon $, and say that there is an \emph{$ \varepsilon $-mark} at time $ t $ if $ t \in \omega_\varepsilon $.

Given $ A\subseteq \Z $, a pair $ (\lambda_i,\lambda_e) $, and $ s \geq 0 $, we construct the process $ (\eta^A_{s,t})_{t\geq s} $ as follows.
At time $ s $, $ \eta^A_{s,s}=A $.
The process jumps from $ B $ to $B\setminus \{x\}$ at time $ t $ if $ x \in B $ and $t \in \omega_x$;
it jumps to $B\cup \{x+1\}$ if $x \in B$ and either $ x \neq \mathcal R B$ and the edge from $(x,t)$ to $(x+1,t)$ is $\lambda_i$-open or $ x=\mathcal R B $ and the edge from $(x,t)$ to $(x+1,t)$ is $\lambda_e$-open;
it jumps to $B\cup \{x-1\}$ if $x\in B$ and either $x\neq \mathcal L B$ and the edge from $(x,t)$ to $(x-1,t)$ is $\lambda_i$-open or $x=\mathcal L B$ and the edge from $(x,t)$ to $(x-1,t)$ is $\lambda_e$-open.
These rules a.s.\ define $\eta^A_{s,t}$ for all $t\geq s$ when $A$ is finite.
For infinite $A$, we define $\eta^A_{s,t}$
by taking a limit $ A_n \uparrow A $.
When the parameters need to be specified, we write $ \eta^A_{s,t,\lambda_i,\lambda_e} $.
Finally, if $ s=0 $, we may omit it and write $ \eta^A_{t} $.

For given $A \in \Sigma$, this defines a process $ (\eta^A_t)_{t\geq 0} $ started from configuration $\eta_0 = A$, informally described in the previous section.
Process
$ (\eta^-_t)_t $ starts from $ \eta_0 = \Z_- $,
$ (\eta^+_t)_t $ starts from $ \eta_0 = \Z_+ $,
$ (\eta^x_t)_t $ starts from $ \eta_0 = \{x\} $,
$ (\eta^0_t)_t $ starts from $ \eta_0 = \{0\} $,
and
$ (\eta^\nu_t)_t $ starts from random $ \eta_0 $ with distribution $ \nu $.
We reserve the letter $ \mu $ for the distribution provided by Proposition~\ref{prop:existsinv}.

Processes denoted by the letter $\eta$ always evolve with parameters $ (\lambda_i,\lambda_e) $ and always use the same graphical construction, differing only in the starting time and initial configuration.
Processes with different rules will be denoted with letters $ \xi $ or $ \zeta $, depending on the section.
In a slight abuse of notation, the superscript in $ \xi^1_t,\xi^2_t,\dots,\xi^n_t,\dots $ may be used to index a sequence of processes rather than specify the initial condition.

\begin{remark}
\label{rmk:allornothing}
$ \Pb(\eta^A_t \ne \emptyset \ \forall t) $ is either positive for all finite non-empty $ A $ or zero for all finite non-empty $ A $.
\end{remark}

The above construction defines the processes of interest.
However, when analyzing these processes, we shall make use of the notions of paths, open paths, active paths, and rightmost paths.
A \emph{path} is a discrete càdlàg function $ \gamma : [t_0,t_1] \to \Z $ 
whose jumps are of length $1$.
We say that a path is \emph{$\lambda$-open} if all its jumps correspond to \emph{$\lambda$-open} edges and it does not cross any recovery marks.
Let $(\lambda_i,\lambda_e)$ be fixed.
Given $x\in \Z$, $A \subseteq \Z$ and $s\geq 0$, an \emph{active path} starting from a space-time point $(x,s)$ for configuration $A$ is a path that does not cross any recovery marks, and whose jumps happen at $\lambda_i$-open or $\lambda_e$-open edges, depending on whether the jump is internal or external to $[\cL \eta^A_{s,t},\cR \eta^A_{s,t}]$.
Due to the nearest-neighbor property of the jumps, if there is an active path starting at $A \times \{s\}$ and ending at $B \times \{t\}$ for a given initial configuration $ A' \supseteq A $, then there exists a rightmost active path going from $A$ to $B$ in the same time interval for the same initial configuration $A'$.

\section{Attractiveness and asymptotic speed}
\label{sec:speedetc}

We start this section with a basic lemma:
\begin{lemma}
\label{lemma:attractive}
On the octant $ \{\lambda_i \geq \lambda_e \geq 0 \} $,
the configuration $ \eta^A_{s,t,\lambda_i,\lambda_e} $ is increasing in $ A $, $ \lambda_i $ and $ \lambda_e $ for fixed $ t \geq s \geq 0 $.
\end{lemma}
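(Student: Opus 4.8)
The plan is to prove, by a single coupling argument carried out on the graphical construction of Section~\ref{sec:notation}, the following statement: if both $(\lambda_i,\lambda_e)$ and $(\lambda_i',\lambda_e')$ lie in the octant, $A\subseteq A'$, $\lambda_i\leq\lambda_i'$ and $\lambda_e\leq\lambda_e'$, then almost surely $\eta^A_{s,t,\lambda_i,\lambda_e}\subseteq\eta^{A'}_{s,t,\lambda_i',\lambda_e'}$ for every $t\geq s$. The three monotonicity assertions in the lemma are the special cases obtained by varying one argument at a time, and the general case follows by composing them. Since $\eta^A_{s,t}$ for infinite $A$ is defined as the increasing limit over finite $A_n\uparrow A$, once the statement is known for finite initial configurations it passes to the limit (monotonicity in the parameters being applied for each fixed $A_n$), so it suffices to treat finite $A\subseteq A'$.

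For finite $A$ and $A'$, fix $t\geq s$. Almost surely both coupled processes remain inside a common bounded space-time box $\Lambda\times[s,t]$, and inside this box there are only finitely many recovery marks and finitely many times at which an edge of $\Lambda$ is $\lambda_i'$- or $\lambda_e'$-open (the $\varepsilon$-clock plays no role here). We induct on these finitely many event times $s<r_1<\dots<r_N\leq t$: the inclusion holds at time $s$ and is constant between consecutive event times, so it is enough to show that it is preserved across each $r_k$. At a recovery mark both processes delete the same site (or are unaffected), so the inclusion is trivially preserved, and the only step that needs work is an infection update across an edge.

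For the infection step, write $B=\eta^A_{s,r^-}$ and $B'=\eta^{A'}_{s,r^-}$, and assume $B\subseteq B'$. Consider (say) the right edge at a site $x$, and suppose this update infects $x+1$ in the smaller process, i.e.\ $x\in B$ and the edge from $(x,r)$ to $(x+1,r)$ is $\rho$-open, where $\rho=\lambda_e$ if $x=\cR B$ and $\rho=\lambda_i$ otherwise. By the induction hypothesis $x\in B\subseteq B'$, so it suffices to check that the edge is $\rho'$-open, where $\rho'=\lambda_e'$ if $x=\cR B'$ and $\rho'=\lambda_i'$ otherwise; since a $\rho$-open edge is $\rho'$-open as soon as $\rho\leq\rho'$, we only need $\rho\leq\rho'$. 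If $x=\cR B$ then $\rho=\lambda_e\leq\lambda_e'\leq\lambda_i'$ and $\rho'\in\{\lambda_e',\lambda_i'\}$, so $\rho\leq\rho'$. If $x\neq\cR B$, then $x<\cR B\leq\cR B'$ (the latter because $B\subseteq B'$), hence $x\neq\cR B'$, so $\rho=\lambda_i$ and $\rho'=\lambda_i'\geq\lambda_i$, again $\rho\leq\rho'$. The left edge is handled symmetrically using $\cL B'\leq\cL B$. Hence any site created in the smaller process by the update is also created in the larger one, the inclusion is preserved across $r_k$, and the induction --- and the lemma --- follows.

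The one genuinely delicate point, and the main obstacle, is precisely the threshold comparison $\rho\leq\rho'$ in the infection step: one must rule out the ``mixed'' situation in which the smaller process uses its internal rate $\lambda_i$ on an edge while the larger process uses its external rate $\lambda_e'$ on the same edge. This cannot occur because $A\subseteq A'$, propagated by the induction hypothesis to $\cR B\leq\cR B'$, forces $x$ to be the rightmost occupied site of the smaller process whenever it is the rightmost occupied site of the larger one; and the remaining case, where $x$ is the rightmost occupied site of the smaller but not of the larger process, is exactly where the octant hypothesis $\lambda_e\leq\lambda_i$ is used. Both the ordering $A\subseteq A'$ and the octant restriction are essential here: outside $\{\lambda_i\geq\lambda_e\}$ the process fails to be attractive and none of the three monotonicity properties survives.
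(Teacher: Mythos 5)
Your proof is correct and is essentially the paper's argument: the paper simply states that the lemma ``follows immediately from the construction described in \S\ref{sec:notation}'', i.e.\ from the joint graphical coupling, and your write-up fills in exactly that coupling, with the key points (the reduction to finite $A$, the propagation of $\cR B\leq\cR B'$ and $\cL B'\leq\cL B$ to exclude the mixed rate comparison, and the use of $\lambda_e\leq\lambda_e'\leq\lambda_i'$ when $x=\cR B<\cR B'$) handled correctly. Only your closing aside that \emph{none} of the three monotonicity properties survives outside the octant goes beyond what is shown (the paper's remark only exhibits failure of monotonicity in $A$), but this does not affect the proof of the lemma as stated.
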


The lemma follows immediately from the construction described in \S\ref{sec:notation}, since infecting more sites can only affect the infection rates positively, by turning some external edges into internal ones.

\begin{remark}
The process is not attractive when $\lambda_e > \lambda_i$.
To see this, consider $A=\{0\}$ and $B=\{3\}$, then for small values of $t$ we have 
$\Pb(1 \in \eta^A_t)>\Pb(1 \in \eta^{A\cup B}_t)$.
\end{remark}

\begin{remark}
When $\lambda_e<\lambda_i$, the process is attractive but not additive. Indeed, with the same $A$ and $B$ as above, we have
$\Pb(1\in \eta^{A\cup B}_t) > \Pb(1\in \eta^A_t)+\Pb(1\in \eta^B_t)$ for small $t$,
hence there is no coupling such that
$\eta^{A\cup B}_t= \eta^A_t \cup \eta^B_t$,
even though 
$\eta^{A\cup B}_t \supseteq  \eta^A_t \cup \eta^B_t$.
\end{remark}

Since many properties of the contact process are usually derived from its additivity, we have to modify or reinvent some of their proofs.
Fortunately, for $\lambda_i \geq \lambda_e$, attractiveness is enough to use the Subadditive Ergodic Theorem as in~\cite[Chapter~VI]{Liggett05}.
This allows us to work with the asymptotic speed of the boundary of the process.
With this in mind, we define
\begin{equation}
\nonumber
\alpha_t(\lambda_i,\lambda_e) = {\E [\cR \eta^-_t]}
\end{equation}
and 
\begin{equation}
\nonumber
\displaystyle \alpha(\lambda_i,\lambda_e) = \inf\limits_{t>0} \frac{\alpha_t(\lambda_i,\lambda_e)}{t}.
\end{equation}

\begin{lemma}
\label{lemma:speed}
Let $0 < \lambda_e \leq \lambda_i < \infty$ be fixed.
Then,
\[
\frac{\cR \eta^-_t}{t}
\overset{\text{a.s.}}{\longrightarrow}
\alpha(\lambda_i,\lambda_e)
=
\lim_{t\to\infty}
\frac{\alpha_t(\lambda_i,\lambda_e)}{t}
.
\]
\end{lemma}
Thanks to Lemma~\ref{lemma:attractive},
the proof is the same as in~\cite[VI.2.19]{Liggett05}.

\begin{lemma}
\label{lemma:gspeed}
Suppose $1<\lambda_e \leq \lambda_i<\infty$ and $\theta(\lambda_i,\lambda_e) > 0$.
Let $ B \in \Sigma^\ominus $.
Then,
\[
\frac{\cR \eta^B_t}{t}
\overset{\text{a.s.}}{\longrightarrow}
\alpha(\lambda_i,\lambda_e) 
.
\]
\end{lemma}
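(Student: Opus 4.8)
The plan is to bound $\cR\eta^B_t$ above and below by quantities whose asymptotic slopes we already control, using attractiveness and Lemma~\ref{lemma:speed}. Fix $B\in\Sigma^\ominus$ and let $x=\cR B$. The upper bound is immediate: since $B\subseteq\Z\cap(-\infty,x]=T^x\Z_-$, Lemma~\ref{lemma:attractive} and translation invariance of the graphical construction give $\eta^B_t\subseteq\eta^{T^x\Z_-}_t$ in distribution up to the shift, hence $\cR\eta^B_t\leq x+\cR\eta^-_t$ in an appropriate coupling, so $\limsup_t \cR\eta^B_t/t\leq\alpha(\lambda_i,\lambda_e)$ a.s.\ by Lemma~\ref{lemma:speed}. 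This step uses only attractiveness and costs nothing.

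For the lower bound I would use the hypothesis $\theta(\lambda_i,\lambda_e)>0$ together with Remark~\ref{rmk:allornothing}. First, on the event that $\eta^B_t\neq\emptyset$ for all $t$ (which has positive probability whenever $B\neq\emptyset$, and $B=\emptyset$ is trivial, though here $B\in\Sigma^\ominus$ is infinite so survival is automatic), I want to produce inside $\eta^B_t$ a translate of a surviving half-line. The idea: since $B$ is a left-infinite configuration, it contains $\{n\}$ for arbitrarily negative $n$; but more usefully, I would argue that after a bounded random time the process $\eta^B$ dominates some fixed translate $T^k\Z_-\cap(-\infty, y]$ — or rather, I would run the process from $B$, wait for it to ``fill in'' a long enough left-interval near its right edge, and then compare. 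Concretely: let $\eta^-$ be the process from $\Z_-$ built on the same graphical construction; by attractiveness $\eta^{\Z_-}_t\subseteq\eta^{B\cup\Z_-}_t$, but $B\cup\Z_-$ may differ from $B$. Instead, I would couple by noting $B\supseteq\Z_-+m$ fails in general, so the cleaner route is: pick $N$ large and observe $B\cap[-N+x,x]$; with positive probability the standard contact-process part of the dynamics fills the block $[x-N,x]$ entirely by some time $s_N$, after which $\eta^B_{s_N}\supseteq[x-N,x]$ and in particular dominates a translate of the initial segment of $\Z_-$ of length $N+1$. Then by a restart argument one shows $\cR\eta^B_t\geq \cR(\text{surviving }\eta^-\text{-process, shifted})$ eventually.

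The cleanest execution, and the one I would actually write, is via the rightmost active path. On survival of $\eta^B$, the rightmost $(\lambda_i,\lambda_e)$-active path $r_t:=\cR\eta^B_t$ is itself (the right boundary of) a contact process started from a semi-infinite configuration, and by the subadditive/attractiveness machinery behind Lemma~\ref{lemma:speed} one gets $r_t/t\to\alpha$; the point is that the \emph{law} of $(\cR\eta^B_t)_t$ differs from that of $(\cR\eta^-_t)_t$ only through the initial left-infinite profile, and attractiveness sandwiches $\eta^{B}_t$ between $\eta^{\Z_-+\cR B}_t$ (giving the upper bound above) and, after a positive-probability filling event followed by a renewal, a delayed-but-full copy of $\eta^{\Z_-+k}$ (giving the matching lower bound). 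I expect the main obstacle to be the lower bound: making rigorous that on the survival event one can, after finite time, guarantee domination of a full translate of an initial segment of $\Z_-$ long enough that the restarted process survives with the surviving path realizing speed $\alpha$ — this requires Remark~\ref{rmk:allornothing} to transfer ``survives'' to ``survives from a bounded block'', plus attractiveness to do the sandwich, plus Lemma~\ref{lemma:speed} applied to the restarted process. The $\limsup\leq\alpha$ half is routine; the $\liminf\geq\alpha$ half conditional on survival is where the care goes, and one must also handle the a.s.\ (not merely conditional) statement, which follows since for $B\in\Sigma^\ominus$ infinite the process survives almost surely when $\theta>0$ — indeed $\eta^B_t\supseteq\eta^{\{n\}}_t$ for each $n\in B$, and infinitely many independent-enough attempts give a.s.\ survival.
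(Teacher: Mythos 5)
Your upper bound ($\limsup_t t^{-1}\cR\eta^B_t\leq\alpha$ via $B\subseteq\Z_-+\cR B$, attractiveness and Lemma~\ref{lemma:speed}) is exactly the paper's argument. The lower bound, however, has a genuine gap. Your plan is: fill a long block $[x-N,x]$ near the right edge with positive probability, then restart and claim $\eta^B$ dominates ``a delayed-but-full copy of $\eta^{\Z_-+k}$''. But a finitely filled block only gives $\eta^{[x-N,x]}_t\subseteq\eta^{(-\infty,x]}_t$, i.e.\ the domination between the restarted process and a half-line process goes the \emph{wrong} way for a lower bound on the right edge; no amount of filling a finite block lets you sit a translate of $\Z_-$ inside the configuration. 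So even on the event that the restarted block process survives forever, you have no stated reason why its right edge moves at speed $\alpha$: Lemma~\ref{lemma:speed} applies only to semi-infinite initial data, and your sketch never supplies the bridge from finitely-started processes to the half-line speed. That bridge is precisely the missing ingredient: the classical nearest-neighbour identity that, on the same graphical construction, $\cR\eta^{\{x\}}_t=\cR\eta^{(-\infty,x]}_t$ for as long as $\eta^{\{x\}}_t\neq\emptyset$ (which persists for the modified-border dynamics, since which edge counts as the boundary edge is determined by the common right edge).

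With that identity the paper's proof is two lines and needs no restart: since $\theta(\lambda_i,\lambda_e)>0$ and $B$ has infinitely many points, almost surely there is some $x\in B$ with $\eta^{\{x\}}_t\neq\emptyset$ for all $t$; for that $x$, $t^{-1}\cR\eta^{\{x\}}_t=t^{-1}\cR\eta^{(-\infty,x]}_t\to\alpha$ by Lemma~\ref{lemma:speed}, and attractiveness ($\{x\}\subseteq B$, here is where $\lambda_e\leq\lambda_i$ enters) gives $\liminf_t t^{-1}\cR\eta^B_t\geq\alpha$. Your closing remark that $\eta^B$ survives a.s.\ because it contains infinitely many single-site attempts is in the right spirit, but mere survival of $\eta^B$ does not yield the speed; what is needed, and what you never isolate, is a single surviving single-site process inside $B$ together with the right-edge coincidence that transfers the speed $\alpha$ to it. As written, your $\liminf$ half would not close.
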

\begin{proof}
Write $ \alpha = \alpha (\lambda_i,\lambda_e) $.
First note that, if $ \eta^0_t \ne \emptyset $, then $ \cR \eta^-_t = \cR \eta^0_t $ (this holds because both processes are defined from the same graphical construction and infections are to nearest-neighbor only).
Hence, a.s., if $ \eta^x_t \ne \emptyset $ for all $ t \geq 0 $, then $ \lim_t t^{-1} \cR \eta^x_t = \alpha $ by the previous lemma.
Now since $\theta(\lambda_i,\lambda_e) > 0$ and $ B $ has infinitely many points, a.s.\ there exists $ x \in B $ such that $ \eta^x_t \ne \emptyset $ for all $ t \geq 0 $, which implies that
$ \liminf_t t^{-1} \cR \eta^B_t \geq \lim_t t^{-1} \cR \eta^x_t = \alpha $ because $ \{x\} \subseteq B $.

On the other hand, we can assume $ B \subseteq \Z_- $ without loss of generality, and from this we get $ \limsup_t t^{-1} \cR \eta^B_t \leq \lim_t t^{-1} \cR \eta^-_t = \alpha $, a.s.
\end{proof}

\begin{lemma}
\label{lemma:rightcontinuous}
The function $\alpha(\cdot,\lambda_e)$ is right continuous on $[\lambda_e,\infty)$,
and
the function $\alpha(\lambda_i,\cdot)$ is right continuous on $[0,\lambda_i]$.
\end{lemma}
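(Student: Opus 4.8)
The plan is to establish right continuity of $\alpha(\cdot,\lambda_e)$ at a fixed point $\lambda_i^0 \geq \lambda_e$ by combining monotonicity with an explicit upper bound that converges as $\lambda_i \downarrow \lambda_i^0$. From Lemma~\ref{lemma:attractive} the quantity $\alpha_t(\lambda_i,\lambda_e) = \E[\cR\eta^-_{t,\lambda_i,\lambda_e}]$ is nondecreasing in $\lambda_i$, hence so is $\alpha(\lambda_i,\lambda_e) = \inf_{t>1} \alpha_t(\lambda_i,\lambda_e)/t$. Therefore $\lim_{\lambda_i \downarrow \lambda_i^0}\alpha(\lambda_i,\lambda_e)$ exists and is at least $\alpha(\lambda_i^0,\lambda_e)$; the content of the lemma is the reverse inequality. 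Since an infimum of right-continuous functions need not be right-continuous in general, the key point is to produce, for each fixed $t$, an upper bound on $\alpha_t(\lambda_i,\lambda_e)$ that is right-continuous in $\lambda_i$ and whose infimum over $t$ still recovers $\alpha(\lambda_i^0,\lambda_e)$ in the limit.

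First I would fix $t>1$ and show $\lambda_i \mapsto \alpha_t(\lambda_i,\lambda_e)$ is itself right continuous. In the graphical construction of \S\ref{sec:notation}, as $\lambda_i \downarrow \lambda_i^0$ the set of $\lambda_i$-open edges decreases to the set of $\lambda_i^0$-open edges (a.s.\ no edge has its opening ``time coordinate'' exactly equal to $\lambda_i^0$, since the PPP $\omega_{x,x\pm1}$ a.s.\ puts no mass on the line $\{\text{second coordinate}=\lambda_i^0\}$). One then checks that on the event that the configuration up to time $t$ depends only on finitely many Poisson marks, $\cR\eta^-_{t,\lambda_i,\lambda_e} \to \cR\eta^-_{t,\lambda_i^0,\lambda_e}$ pointwise; here one must be slightly careful because $\eta^-$ starts from an infinite configuration, but since $\cR\eta^-_{s} = \cR\eta^0_{s}$ whenever $\eta^0_s\neq\emptyset$ and otherwise $\cR\eta^-_s$ equals the rightmost point reachable by an active path from $\Z_-$, the value of $\cR\eta^-_{t}$ depends only on the marks in a bounded space-time region to the left of and below the relevant active path, and a standard restriction-to-a-large-box argument gives a.s.\ convergence. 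Dominated convergence (using, say, a comparison of $\cR\eta^-_t$ with a Poisson-type growth bound uniform for $\lambda_i$ in a bounded neighborhood of $\lambda_i^0$) then yields $\alpha_t(\lambda_i,\lambda_e) \to \alpha_t(\lambda_i^0,\lambda_e)$.

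Next I would upgrade pointwise-in-$t$ right continuity to right continuity of the infimum. Given $\delta>0$, choose $t_0>1$ with $\alpha_{t_0}(\lambda_i^0,\lambda_e)/t_0 < \alpha(\lambda_i^0,\lambda_e)+\delta$. By the previous paragraph there is $\lambda_i^1>\lambda_i^0$ such that for all $\lambda_i\in[\lambda_i^0,\lambda_i^1]$ we have $\alpha_{t_0}(\lambda_i,\lambda_e)/t_0 < \alpha(\lambda_i^0,\lambda_e)+\delta$, and since $\alpha(\lambda_i,\lambda_e)\leq \alpha_{t_0}(\lambda_i,\lambda_e)/t_0$ this gives $\alpha(\lambda_i,\lambda_e) < \alpha(\lambda_i^0,\lambda_e)+\delta$ on that interval. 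Combined with monotonicity, $\lim_{\lambda_i\downarrow\lambda_i^0}\alpha(\lambda_i,\lambda_e) \leq \alpha(\lambda_i^0,\lambda_e)+\delta$ for every $\delta$, hence equality. The statement for $\alpha(\lambda_i,\cdot)$ on $[0,\lambda_i]$ is entirely symmetric: $\alpha_t(\lambda_i,\lambda_e)$ is nondecreasing in $\lambda_e$ (again Lemma~\ref{lemma:attractive}, staying in the octant $\lambda_i\geq\lambda_e$), and the same graphical-construction continuity argument applies to the $\lambda_e$-open edges at the boundary sites.

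The main obstacle I expect is the a.s.\ convergence $\cR\eta^-_{t,\lambda_i,\lambda_e}\to\cR\eta^-_{t,\lambda_i^0,\lambda_e}$ and the accompanying uniform integrability, precisely because $\eta^-$ starts from an infinite set and a priori its rightmost point could be influenced by arbitrarily distant marks. The remedy is the localization observation already implicit in the proof of Lemma~\ref{lemma:gspeed}: the position of the rightmost active path from $\Z_-$ by time $t$ is determined inside a space-time region whose size has exponential tails uniformly in $\lambda_i$ near $\lambda_i^0$, so both the pointwise limit (via finitely-many-marks events) and the domination needed for the limit of expectations go through. Everything else is monotonicity bookkeeping.
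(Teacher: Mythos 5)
Your proof is correct and follows essentially the same route as the paper: the paper likewise notes that $t^{-1}\alpha_t(\cdot,\lambda_e)$ is continuous for each fixed $t$, so that the infimum $\alpha(\cdot,\lambda_e)$ is upper semi-continuous, and then combines this with monotonicity (Lemma~\ref{lemma:attractive}) to get right continuity, your $\delta$--$t_0$ argument being just an unpacking of that step. The only difference is that you spell out the graphical-construction/localization justification of the finite-time continuity, which the paper simply asserts.
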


\begin{proof}
For all $ t > 1 $, the function $ t^{-1} \alpha_t(\cdot,\lambda_e) $ is continuous.
Hence, $ \alpha(\cdot,\lambda_e) $ is upper semi-continuous.
By Lemma~\ref{lemma:attractive}, it is non-decreasing, and therefore it is right-continuous.
The right continuity of $ \alpha(\lambda_i,\cdot) $ is proved in exactly the same way.
(Cf.~\cite[VI.2.27b]{Liggett05}.)
\end{proof}

\begin{lemma}
\label{lemma:nonnegative}
Suppose $\lambda_e \leq \lambda_i$ and $\theta(\lambda_i,\lambda_e) > 0$.
Then $ \alpha(\lambda_i,\lambda_e) \geq 0$.
\end{lemma}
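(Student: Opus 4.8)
The plan is to deduce $\alpha(\lambda_i,\lambda_e)\geq 0$ from the survival hypothesis together with Lemma~\ref{lemma:gspeed}. The key observation is that when $\theta(\lambda_i,\lambda_e)>0$, the process started from $\Z_-$ survives forever almost surely (it dominates $\eta^0$, which survives with positive probability, and by Remark~\ref{rmk:allornothing} together with translation invariance one upgrades this to a.s.\ survival from the infinite configuration $\Z_-$ — indeed $\Z_-$ contains infinitely many sites each of which independently has a positive chance of generating an infinite path, so some site does so a.s.). On this event the rightmost infected site $\cR\eta^-_t$ is well-defined and finite for all $t$, and we want to show it cannot drift to $-\infty$ at linear rate.

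First I would invoke Lemma~\ref{lemma:gspeed} with $B=\Z_-\in\Sigma^\ominus$, which gives $\lim_t t^{-1}\cR\eta^-_t = \alpha(\lambda_i,\lambda_e)$ almost surely. So it suffices to argue that this limit is nonnegative. The natural mechanism is a monotonicity/recurrence argument for the rightmost point: whenever $\cR\eta^-_t = x$, the site $x$ is the rightmost infected site, so the edge from $(x,t)$ to $(x+1,t)$ is governed by rate $\lambda_e>0$; hence there is a positive rate at which $\cR\eta^-$ increases by $1$, while it decreases only when $x$ itself recovers and no other infected site lies at $x$ or to its right — but every infected site to the left of $x$ can also push rightward at rate $\lambda_i$. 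The cleanest route is to compare $\cR\eta^-_t$ with a random walk: it is stochastically bounded below by a continuous-time nearest-neighbor walk that jumps $+1$ at rate at least $\lambda_e$ (from the rightmost-site edge) and $-1$ at rate at most $1$ (the recovery rate at the rightmost site), conditioned on survival of the process. Such a walk has asymptotic speed $\lambda_e-1$ if $\lambda_e\geq 1$; if $\lambda_e<1$ this crude bound is not enough, and one must use the interior infections.

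The hard part will be handling the regime $\lambda_e<1$, where the rightmost site alone does not supply enough outward push. Here I would argue indirectly: suppose for contradiction $\alpha(\lambda_i,\lambda_e)<0$, i.e.\ $\cR\eta^-_t\to-\infty$ a.s.\ at linear rate. Since $\theta(\lambda_i,\lambda_e)>0$ and by attractiveness (Lemma~\ref{lemma:attractive}) the finite process $\eta^0_t$ is dominated by $\eta^-_t$, on the survival event of $\eta^0$ we also have $\cR\eta^0_t\to-\infty$. But by left-right symmetry of the graphical construction, the leftmost site $\cL\eta^0_t$ satisfies $\cL\eta^0_t\to+\infty$ at the same linear rate, which is impossible since $\cL\eta^0_t\leq\cR\eta^0_t$ and both cannot cross while the configuration stays nonempty. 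This contradiction forces $\alpha\geq 0$. I expect the symmetry step to be the one requiring care: one must check that the hypothesis $\lambda_e\leq\lambda_i$ is symmetric under reflection and that the quantity $\alpha$ defined via $\cR\eta^-$ indeed equals (minus) the analogous quantity defined via $\cL\eta^+$, so that the contradiction is genuine; this uses Lemma~\ref{lemma:speed} and the spatial symmetry of $\Pb$.
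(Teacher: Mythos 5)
Your final contradiction argument is correct and is essentially the paper's proof: the paper likewise assumes $\alpha(\lambda_i,\lambda_e)<0$, uses attractiveness (Lemma~\ref{lemma:attractive}) to squeeze $\eta^0_t$ between $\eta^-_t$ and $\eta^+_t$, and uses the a.s.\ linear speed (Lemma~\ref{lemma:gspeed}) plus left--right symmetry to force $\eta^-_t\cap\eta^+_t=\emptyset$ eventually, contradicting $\theta(\lambda_i,\lambda_e)>0$. Your preliminary random-walk comparison and the discussion of a.s.\ survival from $\Z_-$ are unnecessary detours, but they do not affect the validity of the argument you ultimately give.
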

\begin{proof}
By Lemma~\ref{lemma:attractive}, $ \eta^{0}_t \subseteq \eta^-_t $ and $ \eta^{0}_t \subseteq \eta^+_t $ for all $ t $.
If $ \alpha(\lambda_i,\lambda_e) < 0 $, by Lemma~\ref{lemma:gspeed}, we have $ \eta^-_t \cap \eta^+_t = \emptyset $ for all large $ t $, almost surely, which by the previous sentence implies that $\eta^0_t = \emptyset$ for large $t$, and therefore $ \theta(\lambda_i,\lambda_e)=0 $.
(Cf.~\cite[VI.2.27a]{Liggett05}.)
\end{proof}

\begin{theorem}
\label{thm:critnospeed}
For $1< \lambda_e \leq \lambda_c $, we have that
$\alpha(\lambda_*^i(\lambda_e),\lambda_e)=0$.
For $ \lambda_i \geq \lambda_c $, we have
$\alpha(\lambda_i,\lambda_*^e(\lambda_i))=0$.
\end{theorem}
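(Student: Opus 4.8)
\emph{Plan.} I would prove both statements by the same two-sided scheme, establishing separately that the speed is $\geq 0$ and $\leq 0$ at the relevant point; only the second (upper) bound uses the renormalization of Section~\ref{sec:critnospeed}. Consider the first statement; the second is symmetric, interchanging the roles of $\lambda_i$ and $\lambda_e$ and perturbing $\lambda_e$ instead. Fix $\lambda_e\in(1,\lambda_c]$ and write $\lambda^*:=\lambda_*^i(\lambda_e)$. From the elementary facts recorded in Figure~\ref{fig:phasespace2a} we have $\lambda^*\geq\lambda_c\geq\lambda_e$, so $(\lambda^*,\lambda_e)$ lies in the octant of Lemma~\ref{lemma:attractive} and all the lemmas of this section apply at and near that point.

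\emph{Lower bound.} For every $\lambda_i>\lambda^*$ we have $\theta(\lambda_i,\lambda_e)>0$ (by monotonicity of $\theta$ in $\lambda_i$ and the definition of $\lambda^*$) and $\lambda_e\leq\lambda^*<\lambda_i$, so Lemma~\ref{lemma:nonnegative} gives $\alpha(\lambda_i,\lambda_e)\geq 0$; letting $\lambda_i\downarrow\lambda^*$ and invoking the right continuity of $\alpha(\cdot,\lambda_e)$ from Lemma~\ref{lemma:rightcontinuous} yields $\alpha(\lambda^*,\lambda_e)\geq 0$.

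\emph{Upper bound (the renormalization step).} I would argue by contradiction. Suppose $a:=\alpha(\lambda^*,\lambda_e)>0$. Since $a$ is finite, $\E[\cR\eta^-_t]\in\R$ for every $t$, so $\eta^-_t\neq\emptyset$ for all $t$ almost surely, and Lemma~\ref{lemma:speed} gives $t^{-1}\cR\eta^-_t\to a$ almost surely. I would then run a block construction at parameters $(\lambda^*,\lambda_e)$: for a large time $T$ and a width $W=W(T)$ chosen large enough, let $E(T)$ be an event measurable with respect to the marks of the graphical construction inside the space-time box $[-W,W]\times[0,T]$, expressing, informally, that a suitable truncation $\eta^{[-W,0]}$ agrees with $\eta^-$ near its rightmost site throughout $[0,T]$ (a confinement event, which localizes everything to the box despite the non-local dynamics) and that this rightmost site advances by at least $\tfrac{a}{2}T$ while the occupied set stays nonempty at all intermediate times. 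By attractiveness (Lemma~\ref{lemma:attractive}), whenever the true process contains the relevant set at the bottom of a space-time translate of the box, the occurrence of the translated $E(T)$ forces the true process to contain the corresponding set, translated to the right by $\tfrac{a}{2}T$, at the top; and by the a.s.\ convergence of the speed together with standard large-deviation bounds for the Poisson marks one gets $\Pb_{\lambda^*,\lambda_e}(E(T))\to 1$ as $T\to\infty$. Fix $T$ with $\Pb_{\lambda^*,\lambda_e}(E(T))>1-\delta_0$, where $\delta_0$ is the threshold in the comparison with $k$-dependent oriented percolation. The key point is that $E(T)$ is an increasing event depending on only finitely many marks, so $\Pb_{\lambda_i,\lambda_e}(E(T))\to\Pb_{\lambda^*,\lambda_e}(E(T))$ as $\lambda_i\uparrow\lambda^*$; hence there is $\lambda_i'\in[\lambda_e,\lambda^*)$ with $\Pb_{\lambda_i',\lambda_e}(E(T))>1-\delta_0$ as well. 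Tiling space-time by translates of the box at parameters $(\lambda_i',\lambda_e)$ then produces a supercritical $k$-dependent oriented percolation, and on the event that the good boxes percolate the true process survives with positive probability from a suitable finite seed, so $\theta(\lambda_i',\lambda_e)>0$ by Remark~\ref{rmk:allornothing}, with $\lambda_i'<\lambda_*^i(\lambda_e)$ — contradicting the definition of $\lambda_*^i$. Therefore $\alpha(\lambda^*,\lambda_e)\leq 0$, and with the lower bound, $\alpha(\lambda^*,\lambda_e)=0$. The only configuration not covered this way is when no such $\lambda_i'$ is available, i.e.\ $\lambda^*=\lambda_e=\lambda_c$; there the claim reduces to $\alpha(\lambda_c,\lambda_c)=0$, which is classical — the lower bound is as above, and the upper bound follows from the same renormalization (now decreasing both parameters) together with the fact that the critical contact process dies out~\cite{BezuidenhoutGrimmett90} (cf.~\cite[Chapter~VI]{Liggett05}).

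\emph{Main obstacle.} The heart of the proof, and the business of Section~\ref{sec:critnospeed}, is the construction of the good-box event $E(T)$: because the infection rate at a site depends on whether it is currently the rightmost occupied site, and hence on an unbounded region of the configuration, the classical active-path description is unavailable, and one must design a more robust notion of open path for which $E(T)$ (a) is measurable with respect to finitely many marks in a bounded box, (b) certifies, via attractiveness, a genuine rightward advance of the modified process, and (c) has probability tending to $1$. Property (c) is the delicate one: it forces $E(T)$ to be phrased in terms of the half-line process $\eta^-$, which survives once $a>0$, rather than a finite seed, which may die — the transfer back to the finite world is done through the bounded speed at which influence propagates, so that a long enough occupied segment behaves like $\eta^-$ near its right edge for the duration of the box. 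Everything else — the comparison with $k$-dependent oriented percolation and the continuity-in-$\lambda$ perturbation — is routine.
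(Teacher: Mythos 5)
Your outer scaffolding coincides with the paper's: the lower bound via Lemmas~\ref{lemma:nonnegative} and~\ref{lemma:rightcontinuous}, and the upper bound via a finite-box event of probability close to $1$ at the critical point, a perturbation of the parameters using the finiteness of the box, a comparison with $k$-dependent oriented percolation, and Remark~\ref{rmk:allornothing} to contradict the definition of the critical value. But the step you leave as an informal black box --- the construction of the good-box event --- is precisely the content of Section~\ref{sec:critnospeed}, and the substitute you sketch does not work as stated. The problem is the iteration between blocks. You claim that ``by attractiveness, whenever the true process contains the relevant set at the bottom of a translate of the box, the occurrence of the translated $E(T)$ forces the true process to contain the corresponding set, translated to the right, at the top.'' If the ``relevant set'' is a fully occupied interval of length comparable to $W$, the certified output cannot have probability tending to $1$ (the configuration behind the right edge has density bounded away from $1$, so a fully occupied long interval at the top is exponentially unlikely), violating your property (c). If instead the certified output is sparse (an occupied site, or a positive-density subset, in a target interval), then attractiveness alone gives only that the true process dominates the process restarted from that sparse seed, and at the point $(\lambda_*^i(\lambda_e),\lambda_e)$ finite or sparse seeds may die with probability bounded away from $0$ --- indeed finite-seed survival is exactly what is unknown there --- so the next box's event cannot certify the required advance with probability near $1$ uniformly over admissible seeds. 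In the classical contact process this tension is resolved by additivity and the edge identity $\cR\eta^B_t=\max_x\cR\eta^x_t$; here the process is not additive (see the Remark after Lemma~\ref{lemma:attractive}), and your confinement trick (agreement of a truncation with $\eta^-$ near its right edge) only applies when the seed is exactly the top portion of a half-line configuration, not to whatever sparse set the previous box can actually guarantee.

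The paper's way around this is the one genuinely new idea of Section~\ref{sec:critnospeed}, and it is absent from your proposal: the \emph{open branch}, a purely mark-measurable certificate (collections of paths with no recovery marks, $\lambda_i$-marks at every jump and $\lambda_e$-marks at the jumps exposed at the branch's extremes) which certifies infection transmission for the true process \emph{regardless of the configuration outside the domain}, because when the source happens to be the true rightmost or leftmost infected site the needed $\lambda_e$-mark is present, and otherwise a $\lambda_i$-mark suffices (using $\lambda_e\leq\lambda_i$). The grafting lemma (Lemma~\ref{lemma:crossing}) concatenates complete branches across crossing domains, so the certificate produced along the infinite oriented path of good blocks remains anchored at the original time-zero seed $\{-6L,\dots,6L\}$ and no re-seeding at intermediate times is ever needed; and Lemma~\ref{lemma:block} builds the branch event out of edge events for the half-line processes $\eta^-$ and $\eta^+$ only (Lemma~\ref{lemma:speed}), never invoking finite-seed behaviour. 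Your proposal correctly identifies where the difficulty lies, but replacing this mechanism by ``confinement plus attractiveness'' leaves the block-to-block transfer unproved, so the renormalization step --- and with it the upper bound $\alpha\leq 0$ --- is not established.
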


We give the proof in \S\ref{sec:critnospeed}.

\begin{proposition}
\label{prop:speedincreasese}
Suppose $\theta(\lambda_i,\lambda_e) > 0$, and $ \lambda_e + \varepsilon \leq \lambda_i $.
Then
$
\alpha(\lambda_i,\lambda_e+\varepsilon) \geq \alpha(\lambda_i,\lambda_e) + \varepsilon
.$
\end{proposition}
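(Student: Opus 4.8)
The plan is to realize the extra $\varepsilon$ in the external rate as forced infections of the site immediately to the right of the current configuration, occurring at the arrival times of the external clock $\omega_\varepsilon$, and to show that each such forced infection produces, on average, a displacement of the rightmost infected site that is at least as large as what a rigid rightward shift of the whole configuration would produce. The key comparison is the content of Lemma~\ref{lemma:pathdomination}: starting the process from the invariant measure $\mu$ for the configuration seen from the rightmost infected site (provided by Proposition~\ref{prop:existsinv}, which applies since $\theta(\lambda_i,\lambda_e)>0$), infecting the site $\cR A + 1$ and then running the $(\lambda_i,\lambda_e)$-dynamics stochastically dominates (in the sense of $\cR$) running the $(\lambda_i,\lambda_e)$-dynamics after shifting the configuration one step to the right. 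Since $\lambda_e+\varepsilon\le\lambda_c$ we stay in the attractive octant $\lambda_e+\varepsilon\le\lambda_i$ (note $\lambda_e\le\lambda_i$ already, and the hypothesis forces $\lambda_e+\varepsilon\le\lambda_c\le\lambda_i$ in the relevant range), so Lemma~\ref{lemma:attractive} and the subadditive machinery of \S\ref{sec:speedetc} are available.

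First I would set up the coupling. Build the $\xi$-process with parameters $(\lambda_i,\lambda_e+\varepsilon)$ from the enlarged graphical construction: between consecutive $\varepsilon$-marks it follows exactly the $(\lambda_i,\lambda_e)$-rules, and at each $\varepsilon$-mark time $t$ we additionally declare the edge from $(\cR\xi_{t-},t)$ to $(\cR\xi_{t-}+1,t)$ to be open, i.e.\ we add the site $\cR\xi_{t-}+1$. One checks this reproduces the correct law of the $(\lambda_i,\lambda_e+\varepsilon)$-process, because the superposition of a rate-$\lambda_e$ PPP on the boundary edge with the rate-$\varepsilon$ external clock is a rate-$(\lambda_e+\varepsilon)$ PPP. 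Start both processes from $\Z_-$ (or from any $B\in\Sigma^\ominus$), and let $N_t$ be the number of $\varepsilon$-marks up to time $t$, so $\E N_t = \varepsilon t$.

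Next, the renewal/induction step. Between $\varepsilon$-marks the two speeds agree, so all the gain is concentrated at the mark times. At the $k$-th mark time $\tau_k$, I want to say that the displacement $\cR\xi_{\tau_k} - \cR\xi_{\tau_k-}$, conditionally on the configuration seen from its rightmost site, has expectation at least $1$ more (in the sense of eventual effect on $\cR$) than not adding the site — and, crucially, that the long-run effect of the add at $\tau_k$ is to advance $\cR\xi_t$ for all later $t$ by at least one step relative to the non-added evolution. This is where Lemma~\ref{lemma:pathdomination} enters: adding a site at the boundary dominates shifting the configuration right by one, and a rightward shift of the initial data obviously advances $\cR\eta^-_t$ by exactly $1$ for all $t$. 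Feeding the configuration-seen-from-the-tip at time $\tau_k$ into Proposition~\ref{prop:existsinv}, which guarantees this law is (asymptotically) $\mu$ so that Lemma~\ref{lemma:pathdomination} is applicable, one obtains $\E[\cR\xi_t] \ge \E[\cR\eta^-_t] + \E N_t = \alpha_t(\lambda_i,\lambda_e)\cdot(1+o(1)) + \varepsilon t$ up to lower-order corrections; dividing by $t$ and using Lemma~\ref{lemma:speed} (applied to both parameter pairs, legitimate by attractiveness) together with Lemma~\ref{lemma:gspeed} gives $\alpha(\lambda_i,\lambda_e+\varepsilon)\ge\alpha(\lambda_i,\lambda_e)+\varepsilon$.

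The main obstacle, as anticipated in the introduction, is making the "cumulative" claim rigorous: that an add at time $\tau_k$ permanently advances the rightmost site by at least one relative to the counterfactual, and that these advances do not interfere destructively as $k$ grows. The honest way to handle this is not pathwise but in expectation, via a telescoping argument: one writes $\E[\cR\xi_t]$ as a sum over marks of the expected increment contributed by switching on the $k$-th add (holding earlier adds on), each such increment being $\ge 1$ by an application of Lemma~\ref{lemma:pathdomination} to the process already carrying the earlier adds (which is still in the attractive regime and still, after the restart, has its tip-configuration comparable to $\mu$ via Proposition~\ref{prop:existsinv}). Controlling the error terms — the discrepancy between the actual tip-configuration at $\tau_k$ and the stationary $\mu$, and boundary effects near $t$ — and confirming that these are $o(t)$ so they wash out in the speed limit, is the delicate part; everything else is bookkeeping with the graphical construction and the already-established speed lemmas.
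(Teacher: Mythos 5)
Your overall strategy is the same as the paper's (forced infections at the $\varepsilon$-clock rings, Lemma~\ref{lemma:pathdomination} to compare ``add a site at the tip'' with ``shift right by one'', attractiveness, and a comparison of expectations), but there is a genuine gap at precisely the point where the paper's proof does its real work. Your telescoping step applies Lemma~\ref{lemma:pathdomination} to the configuration of the $\xi$-process (parameters $(\lambda_i,\lambda_e+\varepsilon)$) seen from its tip at the $k$-th ring time, arguing that this law is ``asymptotically $\mu$'' with errors that are $o(t)$. That is not justified: Proposition~\ref{prop:existsinv} concerns the $(\lambda_i,\lambda_e)$-dynamics, and once the first forced infection has occurred the process you are tracking is no longer a $(\lambda_i,\lambda_e)$-process, so its tip-recentered law has no reason to be close to $\mu$ (if it converges at all, it would be to the invariant measure for $(\lambda_i,\lambda_e+\varepsilon)$, for which Lemma~\ref{lemma:pathdomination} says nothing). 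You flag this as ``the delicate part'' but offer no mechanism to control it, and controlling it head-on is exactly what the paper avoids. The paper's solution is a restart: start the pair from $\mu$ itself, so that at $\tau_1$ the tip-recentered law is \emph{exactly} $\mu$ (independence of the clock plus stationarity from Proposition~\ref{prop:existsinv}); then Lemma~\ref{lemma:pathdomination} gives $\Psi\xi_{\tau_1}\succcurlyeq\mu$, which is used to extract a thinned configuration $\zeta^1\subseteq\xi_{\tau_1}$ with the same rightmost site and $\Psi\zeta^1\sim\mu$ exactly, and a fresh $(\lambda_i,\lambda_e)$-process is run from $\zeta^1$; attractiveness (valid since $\lambda_e+\varepsilon\leq\lambda_c\leq\lambda_i$) keeps it below $\xi$, and Proposition~\ref{prop:speedconstant} gives the exact identity $\E[\cR\eta^\mu_t]=\alpha t$, so the recursion $\E[\cR\xi_{\tau_n}]\geq\E[\cR\eta_{\tau_n}]+n$ holds with no error terms at all.

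A second, smaller omission: even granting the expectation bound along the ring times, you still have to pass from $\E[\cR\xi_{\tau_n}]/n\geq(\alpha+\varepsilon)/\varepsilon$ to the almost-sure speed $\alpha(\lambda_i,\lambda_e+\varepsilon)$, and a.s.\ convergence (Lemma~\ref{lemma:gspeed}) does not by itself bound expectations from above by the limit. The paper handles this with a reverse-Fatou step, dominating $\cR\xi_t$ by the pure right-jump process $Z_t$ whose normalized increments converge in mean; your sketch simply divides by $t$ and cites Lemmas~\ref{lemma:speed} and~\ref{lemma:gspeed}, which does not close this loop (note also that $\tau_n$ is not independent of the $\xi$-evolution, so replacing $\tau_n$ by deterministic times is not immediate). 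In short: right skeleton, but the two technical devices that make the argument work --- the exact-$\mu$ restart via thinning, and the domination/Fatou step --- are missing.
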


We give the proof in \S\ref{sec:speedincreasese}.

Recall Theorem~\ref{thm:ds} which says that
$\lambda_*^i(\lambda_e)<\infty$ for $\lambda_e>1$.

\begin{corollary}
\label{cor:speedincrease}
Let $ 1 < \lambda_e < \lambda_e' \leq \lambda_c $.
Then $\alpha(\lambda_*^i(\lambda_e),\lambda_e')>0.$
\end{corollary}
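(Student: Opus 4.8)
The obstruction to simply quoting Proposition~\ref{prop:speedincreasese} is that, by definition of the critical value, $\theta(\lambda_*^i(\lambda_e),\lambda_e)=0$, so the hypothesis of that proposition fails precisely at the point of interest. The plan is therefore to apply it at internal rates slightly \emph{above} $\lambda_*^i(\lambda_e)$, where the process survives, and then pass to the limit using right-continuity of the speed.

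Write $\lambda_i^\circ := \lambda_*^i(\lambda_e)$ and $\varepsilon := \lambda_e' - \lambda_e>0$. I would first record two preliminary observations. Since $\lambda_e>1$, Theorem~\ref{thm:ds} gives $\lambda_i^\circ<\infty$. And since $\lambda_e<\lambda_e'\leq\lambda_c$, the elementary phase-space facts of Figure~\ref{fig:phasespace2a} show that $\theta(\lambda,\lambda_e)=0$ for all $\lambda\leq\lambda_c$, whence $\lambda_i^\circ=\inf\{\lambda:\theta(\lambda,\lambda_e)>0\}\geq\lambda_c\geq\lambda_e'$. In particular $\lambda_i^\circ\in[\lambda_e',\infty)$ and $\lambda_i^\circ>\lambda_e$, so all the processes appearing below live in the attractive octant and the lemmas of this section apply.

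Now fix $\delta>0$. Monotonicity of $\theta(\cdot,\lambda_e)$ together with the definition of $\lambda_i^\circ$ gives $\theta(\lambda_i^\circ+\delta,\lambda_e)>0$, and since $\lambda_e\leq\lambda_i^\circ+\delta$, Lemma~\ref{lemma:nonnegative} yields $\alpha(\lambda_i^\circ+\delta,\lambda_e)\geq0$ (one could instead invoke Theorem~\ref{thm:critnospeed} together with monotonicity of $\alpha$ in its first argument). Since $\lambda_e+\varepsilon=\lambda_e'\leq\lambda_c$ and $\theta(\lambda_i^\circ+\delta,\lambda_e)>0$, Proposition~\ref{prop:speedincreasese} applies and gives
\[
\alpha(\lambda_i^\circ+\delta,\lambda_e')\;\geq\;\alpha(\lambda_i^\circ+\delta,\lambda_e)+\varepsilon\;\geq\;\varepsilon.
\]
As this holds for every $\delta>0$ and $\alpha(\cdot,\lambda_e')$ is right-continuous on $[\lambda_e',\infty)$ by Lemma~\ref{lemma:rightcontinuous}, letting $\delta\downarrow0$ — which is legitimate exactly because $\lambda_i^\circ\geq\lambda_e'$ — gives $\alpha(\lambda_*^i(\lambda_e),\lambda_e')\geq\varepsilon>0$, as claimed.

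Once the ingredients are lined up this is essentially bookkeeping; the single delicate point is the one noted at the outset — $\theta$ vanishes precisely at $\lambda_i^\circ$ — which is why the proof must proceed by approximation from above, and why one has to check that $\lambda_i^\circ$ lies in the interval $[\lambda_e',\infty)$ on which right-continuity of $\alpha(\cdot,\lambda_e')$ is guaranteed.
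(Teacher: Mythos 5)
Your proof is correct and follows essentially the same route as the paper's: apply Proposition~\ref{prop:speedincreasese} together with Lemma~\ref{lemma:nonnegative} at $\lambda_*^i(\lambda_e)+\delta$, where survival holds, and then let $\delta\downarrow 0$ using the right-continuity of $\alpha(\cdot,\lambda_e')$ from Lemma~\ref{lemma:rightcontinuous}. Your extra care in verifying $\lambda_*^i(\lambda_e)\geq\lambda_c\geq\lambda_e'$ (so that right-continuity applies, including the endpoint case $\lambda_e'=\lambda_c$) is a welcome but minor addition to what the paper states more tersely.
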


\begin{proof}
Write $\varepsilon=\lambda_e'-\lambda_e$.
Take $\lambda_i = \lambda_*^i(\lambda_e) \geq \lambda_c \geq \lambda_e' $ (note that $\lambda_*^i(\lambda_e) < \infty$ by Theorem~\ref{thm:ds}).
For $\delta > 0$, we have $\theta(\lambda_i+\delta,\lambda_e) > 0$.
By Proposition~\ref{prop:speedincreasese} and Lemma~\ref{lemma:nonnegative}, $\alpha(\lambda_i+\delta,\lambda_e') \geq \varepsilon$.
Letting $\delta \downarrow 0$,
by Lemma~\ref{lemma:rightcontinuous} we get
$\alpha(\lambda_i,\lambda_e') \geq \varepsilon$.
\end{proof}

\begin{proposition}
\label{prop:speedincint}
Let $ \lambda_i' > \lambda_i > \lambda_c $.
Then $\alpha(\lambda_i' ,\lambda_*^e(\lambda_i))>0.$
\end{proposition}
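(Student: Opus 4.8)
The plan is to mimic the structure of Corollary~\ref{cor:speedincrease}, but using Proposition~\ref{prop:speedincint}'s internal-increase analogue, namely Proposition~\ref{prop:speedincreasese}'s counterpart for the bulk parameter. Looking at the statement, I want to prove $\alpha(\lambda_i',\lambda_*^e(\lambda_i))>0$ whenever $\lambda_i'>\lambda_i>\lambda_c$. The natural route is to first establish a ``speed increases with $\lambda_i$'' proposition of the form: if $\theta(\lambda_i,\lambda_e)>0$ and $\lambda_i+\varepsilon$ still lies in the attractive octant, then $\alpha(\lambda_i+\varepsilon,\lambda_e)\geq\alpha(\lambda_i,\lambda_e)+c(\varepsilon)$ for some strictly positive $c(\varepsilon)$ (this is the content promised by Section~\ref{sec:intinc}, proved via the auxiliary $\zeta$-process and the cumulative-effect argument sketched in the introduction). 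I will take that as the key input.

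Granting such a proposition, here is how I would carry out the argument. Fix $\lambda_i'>\lambda_i>\lambda_c$ and set $\lambda_e=\lambda_*^e(\lambda_i)$; note $\lambda_e\leq\lambda_i<\lambda_i'$, so all the pairs in play lie in the attractive octant and Lemmas~\ref{lemma:speed}--\ref{lemma:nonnegative} apply. Write $\varepsilon=\lambda_i'-\lambda_i>0$. For every $\delta>0$ we have $\lambda_i+\delta>\lambda_i\geq\lambda_c>\ldots$ — more to the point, $(\lambda_i,\lambda_e+\delta')$ is supercritical for small $\delta'$ since $\lambda_e$ is exactly the critical external value at $\lambda_i$; but I actually want to perturb $\lambda_i$ upward instead. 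Since $\lambda_i>\lambda_c$ and $\lambda_*^e$ is (by Theorem~\ref{thm:ds}'s corollary) strictly below $\lambda_i$, for $\delta>0$ small the pair $(\lambda_i,\lambda_e+\delta)$ still has $\lambda_e+\delta\leq\lambda_i$ and is supercritical, so $\theta(\lambda_i,\lambda_e+\delta)>0$. By the internal-increase proposition applied at $(\lambda_i,\lambda_e+\delta)$ with increment $\varepsilon$ (and using that $\lambda_i+\varepsilon=\lambda_i'$ keeps us in the octant provided $\delta$ is small enough that $\lambda_e+\delta\leq\lambda_i'$, which holds), together with Lemma~\ref{lemma:nonnegative} giving $\alpha(\lambda_i,\lambda_e+\delta)\geq0$, we obtain $\alpha(\lambda_i',\lambda_e+\delta)\geq c(\varepsilon)>0$. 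Finally let $\delta\downarrow 0$: by the right-continuity of $\alpha(\lambda_i',\cdot)$ from Lemma~\ref{lemma:rightcontinuous}, we conclude $\alpha(\lambda_i',\lambda_e)=\alpha(\lambda_i',\lambda_*^e(\lambda_i))\geq c(\varepsilon)>0$.

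The main obstacle I anticipate is not the bookkeeping above — that is a routine transcription of the proof of Corollary~\ref{cor:speedincrease} — but rather making sure the internal-increase input is stated in a form strong enough to be applied \emph{at the perturbed point} $(\lambda_i,\lambda_e+\delta)$ and with a quantitative lower bound $c(\varepsilon)$ that does not degenerate as $\delta\to0$. Concretely, one needs the constant in the speed increase to depend only on $\varepsilon$ (and perhaps on the fact that we are uniformly inside the supercritical region), not on $\alpha$ of the base point or on the precise external rate. This is exactly why the introduction's argument for the internal increase is ``more indirect'': the bound comes from the survival probability $\theta_*>0$ of the auxiliary $\zeta$-process with rate $\lambda_e$ only on the right edge, and one must check $\theta_*$ can be chosen uniform over the relevant perturbations. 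Once that uniformity is in hand, the limit $\delta\downarrow0$ and right-continuity close the argument. One subtlety worth double-checking: we need $\theta(\lambda_i,\lambda_e+\delta)>0$ for all small $\delta>0$, which follows because $\lambda_i>\lambda_c$ forces $\lambda_*^e(\lambda_i)<\lambda_i$ strictly (Theorem~\ref{thm:nohorizontals} or its corollary) — but since Theorem~\ref{thm:nohorizontals} is what we are building toward, one should instead argue directly: $\lambda_e=\lambda_*^e(\lambda_i)$ means $\theta(\lambda_i,\lambda_e')>0$ for every $\lambda_e'>\lambda_e$, in particular for $\lambda_e'=\lambda_e+\delta$, which is all that is used.
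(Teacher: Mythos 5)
Your proposal does not actually prove the proposition: it reduces it to an unproven ``key input'' --- a quantitative statement of the form $\alpha(\lambda_i+\varepsilon,\lambda_e)\geq\alpha(\lambda_i,\lambda_e)+c(\varepsilon)$ with $c(\varepsilon)>0$ uniform over the perturbed points $(\lambda_i,\lambda_e+\delta)$ --- which is precisely the substance of Section~\ref{sec:intinc} and is, if anything, stronger than Proposition~\ref{prop:speedincint} itself (the paper never establishes, nor needs, such a uniform additive increment valid throughout the supercritical region). You flag the crucial difficulty yourself (``one must check $\theta_*$ can be chosen uniform over the relevant perturbations'') and then leave it unresolved, so the argument is essentially circular: all the mathematical work of the proposition is hidden in the assumed input.

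The paper's route avoids your $\delta$-perturbation and right-continuity step entirely by working directly at $\lambda_e=\lambda_*^e(\lambda_i)$, where no uniformity question arises. There, Theorem~\ref{thm:critnospeed} gives $\alpha(\lambda_i,\lambda_e)=0$, and the auxiliary process $\zeta$ (rate $\lambda_e$ only to the right of the rightmost infected site, rate $\lambda_i$ everywhere else, including the left edge) is shown to survive with probability $\theta_*>0$ (Lemma~\ref{lemma:survivalint}), by trapping it between $\eta^-$, whose right edge has zero speed, and a configuration whose left edge drifts left at speed $\alpha(\lambda_i,\lambda_i)>0$. Then Lemma~\ref{expinc} shows that adding one extra site to the right of a semi-infinite configuration raises $\E[\cR\eta_t]$ by at least $\theta_*$ for all later $t$ (coupling via the descendants of the extra site, which evolve like $\zeta$), and a coupling estimate shows that in each unit of time the $\lambda_i'$-process gains such an extra site over the $\lambda_i$-process with probability at least some $\delta>0$. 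Iterating gives $\E[\cR\xi^-_t]\geq\E[\cR\eta^-_t]+n\delta\theta_*$ for $t\geq n$, hence $\alpha(\lambda_i',\lambda_*^e(\lambda_i))\geq\delta\theta_*>0$. None of these three steps (survival of $\zeta$ at the critical external rate, the cumulative $\theta_*$-gain per added site, the per-unit-time probability $\delta$ of gaining a site) appears in your proposal beyond being named as a hoped-for black box, so the gap is the entire core of the proof.
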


We give the proof in \S\ref{sec:intinc}.

\begin{proof}
[\proofname\ of Theorems~\ref{thm:noverticals} and~\ref{thm:nohorizontals}]
Suppose that $1 < \lambda_e < \lambda_e' \leq \lambda_c $ and take $\lambda_i = \lambda_*^i(\lambda_e)$ (note that $\lambda_i \geq \lambda_c>0$ and, by Theorem~\ref{thm:ds}, $\lambda_*^i(\lambda_e) < \infty$).
By Corollary~\ref{cor:speedincrease}, $\alpha(\lambda_i,\lambda_e')>0$.
By Theorem~\ref{thm:critnospeed}, $\lambda_i > \lambda_*^i(\lambda_e')$, since otherwise we would have $\alpha(\lambda_i,\lambda_e')=0$.
So $\lambda_*^i(\lambda_e) > \lambda_*^i(\lambda_e')$, proving
Theorem~\ref{thm:noverticals}.

Note that
$\lambda_*^e(\cdot)$ is non-increasing on
$[\lambda_c,+\infty)$
by attractiveness, since $\lambda_*^e(\cdot) \leq \lambda_c$ on this interval.
Let $ \lambda_i' > \lambda_i > \lambda_c $ and take $\lambda_e = \lambda_*^e(\lambda_i)$ (note that $1\leq \lambda_e \leq \lambda_c$).
By Proposition~\ref{prop:speedincint}, $\alpha(\lambda_i',\lambda_e)>0$.
By Theorem~\ref{thm:critnospeed}, $\lambda_e > \lambda_*^e(\lambda_i')$, since otherwise we would have $\alpha(\lambda_i',\lambda_e)=0$.
So $\lambda_*^e(\lambda_i) > \lambda_*^e(\lambda_i')$, proving
Theorem~\ref{thm:nohorizontals}.
\end{proof}

\section{The process seen from the boundary}
\label{sec:edge}
In this section we obtain a series of properties for the process starting from semi-infinite configurations, seen from the boundary.
These results will be used in subsequent sections and some of them may be of independent interest.

We start with some more notation.
Define $ \Sigma^\odot = \{ A \in \Sigma^\ominus: \cR A = 0 \} $ and define the map $ \Psi : \Sigma^\ominus \to \Sigma^\odot $ by $ \Psi A = T^{-\cR A} A $.
For finite non-empty $A$, we define $\Psi A$ by the same formula, and we define $\Psi \emptyset = \emptyset$.
We define the addition of a site
$ \Xi : \Sigma^\ominus \to \Sigma^\ominus $
by
$ \Xi \eta = \eta \cup \{\cR\eta+1\} $.
A real-valued function $f$ on $ \Sigma $ will be called \emph{increasing} if $f(A) \leq f(B)$ whenever $A \subseteq B$.
If $\eta $ and $\xi$ are random elements of $\Sigma$, we say that $\eta$ stochastically dominates $\xi$, denoted $ \eta \succcurlyeq \xi $, if $\E [f(\eta)]\geq \E [f(\xi)]$ for every bounded increasing measurable function $f$.

\begin{proposition}
\label{prop:existsinv}
Suppose $\lambda_e \leq \lambda_i$ and $\theta(\lambda_i,\lambda_e) > 0$, or $ \lambda_i=\lambda_e=\lambda_c $.
Then
there exists a measure~$ \mu $ supported on $ \Sigma^\odot $ such that $ \Psi\eta^\mu_t \sim \mu $ for every $ t\geq 0 $, and $ \Psi \eta^A_t \to \mu $ weakly for every fixed $ A \in \Sigma^\odot $.
\end{proposition}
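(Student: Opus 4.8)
The plan is to obtain $\mu$ as a Cesàro (or subsequential weak) limit of the distributions of $\Psi\eta^A_t$ and to upgrade this to genuine convergence and stationarity using attractiveness. First I would fix a convenient initial condition, namely $A=\Z_-\in\Sigma^\ominus$, and consider the process $(\Psi\eta^-_t)_t$ on $\Sigma^\odot$. Since $\Sigma^\odot$ is a compact metrizable space (a closed subset of $\mathcal P(\Z)$ in the product topology), the family of time-averaged laws $\frac1T\int_0^T \mathrm{law}(\Psi\eta^-_t)\,\dd t$ is tight, so along some sequence $T_n\to\infty$ it converges weakly to a probability measure $\mu$ on $\Sigma^\odot$. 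The standard argument (as for invariant measures of Feller processes) shows $\mu$ is invariant for the Markov process $(\Psi\eta_t)_t$; here one must check that $(\Psi\eta_t)_t$ is itself Feller on $\Sigma^\odot$, which follows from the graphical construction since $\Psi\eta^A_t$ depends continuously (in the product topology, for the appropriate a.s.\ event) on $A$ and the Poisson data, at least when the process survives. This handles the existence of a stationary $\mu$ with $\Psi\eta^\mu_t\sim\mu$.

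For the convergence statement $\Psi\eta^A_t\to\mu$ for every fixed $A\in\Sigma^\ominus$, I would use monotonicity in the initial condition (Lemma~\ref{lemma:attractive}, valid because $\lambda_e\leq\lambda_i$, including the diagonal case $\lambda_i=\lambda_e=\lambda_c$). The key coupling is: for $A\in\Sigma^\ominus$ with $A\subseteq\Z_-$ we have $\eta^A_t\subseteq\eta^-_t$, and one wants to compare the \emph{shifted} configurations $\Psi\eta^A_t$ and $\Psi\eta^-_t$. The point where survival enters is that, conditioned on survival, the rightmost site $\cR\eta^A_t$ and $\cR\eta^-_t$ "couple" in the sense that after a long time the two processes agree near their common rightmost site; this is a standard consequence of the fact that $\eta^-_t$ and $\eta^A_t$ are built from the same graphical data and, once they share a site, the discrepancies lie far to the left (and the rightmost region only depends, through $\cR$, on the bulk dynamics). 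Concretely, let $\tau$ be the first time the two processes (both surviving) have a common site; after $\tau$ the portions of $\eta^-$ and $\eta^A$ in a window $[\cR\eta^-_t-k,\cR\eta^-_t]$ coincide with probability tending to $1$ as $t\to\infty$, for each fixed $k$. Since any bounded continuous function on $\Sigma^\odot$ is well approximated by one depending only on such a window, this gives $\Psi\eta^A_t$ and $\Psi\eta^-_t$ asymptotically the same law; combined with the existence of the Cesàro limit, one upgrades to an honest limit $\mu$ by a further monotonicity/FKG argument or by noting that $t\mapsto \mathrm{law}(\Psi\eta^-_t)$ is "monotone along the graphical construction" enough to force convergence of the whole trajectory. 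In the diagonal critical case $\lambda_i=\lambda_e=\lambda_c$ the process started from $\Z_-$ never dies, so the above coupling argument still runs, and $\mu$ is precisely the stationary measure of the critical contact process seen from the edge, as in~\cite{CoxDurrettSchinazi91}.

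The main obstacle I anticipate is the passage from the Cesàro/subsequential limit to genuine weak convergence $\Psi\eta^A_t\to\mu$ for all $t$, i.e.\ proving there is a \emph{unique} such stationary measure and that the dynamics mixes toward it. For the ordinary contact process seen from the edge this is handled via the self-duality/additivity of the process together with a coupling of $\eta^-$ and $\eta^+$; in the present model additivity fails (see the Remark after Lemma~\ref{lemma:attractive}), so the coupling must be done more carefully using only attractiveness and the one-dimensional geometry — specifically, the monotone coupling $\eta^A_t\subseteq\eta^-_t$ and the observation that the rightmost-site dynamics, unlike the leftmost one, is eventually insensitive to the left tail of the configuration. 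Making precise "the window near $\cR$ stabilizes" requires an estimate that, on survival, discrepancies between $\eta^A$ and $\eta^-$ travel leftward at linear speed relative to $\cR$, which is where Lemma~\ref{lemma:gspeed} (equality of edge speeds for all $B\in\Sigma^\ominus$) does the essential work. I would therefore structure the proof as: (i) Feller property and existence of a stationary $\mu$ via compactness; (ii) edge-speed equality plus a graphical coupling to show the law near $\cR\eta^A_t$ converges; (iii) identify the limit with the stationary $\mu$ and conclude. I defer the detailed verification to Appendix~\ref{appendix:existsinv}.
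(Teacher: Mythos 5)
There is a genuine gap, and it sits exactly where the paper's appendix does its real work. You assert that $\Sigma^\odot$ is ``a compact metrizable space (a closed subset of $\mathcal P(\Z)$)'' and therefore that your subsequential/Ces\`aro limit $\mu$ automatically lives on $\Sigma^\odot$. This is false: $\Sigma^\odot$ is not closed in the product topology, since e.g.\ $A_n=\{0\}\cup\{-n,-n-1,\dots\}$ converges to the finite configuration $\{0\}$. Only $\Sigma=\mathcal P(\Z)$ is compact, so a priori the limit measure may charge finite configurations (or, after recentering, lose mass to ``thin'' configurations). Ruling this out, i.e.\ proving $\mu(\Sigma^\odot)=1$, is the crux: the paper does it via Lemma~\ref{lemma:domination}, showing that $\Psi\eta^-_t$ stochastically dominates $\zeta\cap\Z_-$, where $\zeta$ is the upper invariant measure of the standard supercritical contact process with parameter $\lambda_i>\lambda_c$ (note $\theta(\lambda_i,\lambda_e)>0$ with $\lambda_e\leq\lambda_i$ forces $\lambda_i>\lambda_c$), by conditioning on the rightmost active path and comparing with backward paths to $-\infty$. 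Your proposal contains no substitute for this step, and without it your Krylov--Bogolyubov argument also collapses, because the process seen from the rightmost site is \emph{not} Feller: as the paper points out, the transition probabilities are discontinuous at finite configurations, and $A\mapsto\E[f(\Psi\eta^A_t)]$ is continuous only at points of $\Sigma^\odot$. So you need the support property $\mu(\Sigma^\odot)=1$ \emph{before} you can conclude $\Psi\eta^\mu_t\sim\mu$; your ``at least when the process survives'' remark does not resolve this circularity.

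Two further points. For the convergence $\Psi\eta^A_t\to\mu$ from a general $A\in\Sigma^\ominus$, your coupling sketch (discrepancies drifting left of the right edge, using Lemma~\ref{lemma:gspeed}) is in the right spirit but is far from a proof as stated; the paper instead invokes the construction of Galves--Presutti, which supplies this step once a subsequential limit along $\Psi\eta^-_{t_k}$ exists. And for the diagonal critical case $\lambda_i=\lambda_e=\lambda_c$ your claim that ``the above coupling argument still runs'' is unsupported: there $\theta=0$ and there is no nontrivial invariant measure to dominate by, so the tightness/support issue is genuinely delicate; the paper does not reprove it but cites Cox--Durrett--Schinazi (Theorem~1 of that paper) for exactly this case. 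You should either do the same or supply a separate argument.
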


Throughout this paper, the letter $ \mu $ refers to the above measure.
The case $\lambda_e=\lambda_i=\lambda_c$ is~\cite[Theorem~1]{CoxDurrettSchinazi91}.
So we have to prove Proposition~\ref{prop:existsinv} for $ \lambda_e \leq \lambda_i $ and $ \theta(\lambda_i,\lambda_e)>0 $.
We will use the following lemma.

\begin{lemma}
\label{lemma:domination}
Let $ \lambda_i > \lambda_c $ and $ \lambda_e \geq 0 $.
Then for all $ t \geq 0 $,
\[
\Psi \eta^-_t \succcurlyeq \zeta \cap \Z_-
,
\]
where $ \zeta $ denotes a random configuration distributed as the upper invariant measure for the standard contact process with parameter $ \lambda_i > \lambda_c $.
\end{lemma}
\begin{proof}
Fix $ t \geq 0 $.
Let $ \Gamma $ denote the rightmost active path for the initial configuration $ \Z_- $ in the time interval $[0, t] $.

For a deterministic path $(\gamma_s)_{s\in[0,t]}$,
let
$D_{\gamma}^\ominus = \{(x,s) \in \R \times [0,t] : x < \gamma_s \wedge \gamma_{s-} \}$, $D_{\gamma}^\odot = \{(x,s) \in \Z \times [0,t] : s=0 \text{ or } x = \gamma_s \wedge \gamma_{s-} \}$
and
$D_\gamma^{\oplus} = \{(x,s) \in \R \times [0,t] : x \geq \gamma_s \wedge \gamma_{s-} \}$.
So the path $\gamma$ is entirely inside $D_\gamma^{\oplus}$, but edges that connect $\gamma$ with $D_\gamma^{\ominus}$ are inside $D_\gamma^{\ominus}$.

On the event $\{\Gamma=\gamma\}$, the configuration $\eta^-_t$ is given by the set of sites that can be reached from $D_{\gamma}^\odot$ within $D_{\gamma}^\ominus$ via a $ \lambda_i $-open path, plus $\gamma_t$ itself (see Figure~\ref{fig:rightmost}).
On the other hand, the event $\{\Gamma=\gamma\}$ is determined by the graphical construction $\omega$ on the region ${{D_\gamma^{\oplus}}}$ (see Figure~\ref{fig:rightmost}).
Hence, the conditional distribution of $\eta_t^-$ given $\{\Gamma=\gamma\}$ is the distribution of the set of sites that can be reached from $D_{\gamma}^\odot$ within $D_{\gamma}^\ominus$, without conditioning.
This has the same distribution as the set of sites $x \leq \gamma_t$ such that there is a $\lambda_i$-open path starting at $(x,t)$, going backwards in time, and ending at either $\Z \times \{0\}$ or at a point of $\gamma$.
And since we are interested in the conditional distribution of $\Psi \eta^-_t$, we just need to translate $\gamma$ horizontally so that $\gamma_t=0$.

\begin{figure}[b!]
\hfil
\includegraphics[page=1,width=.95\textwidth]{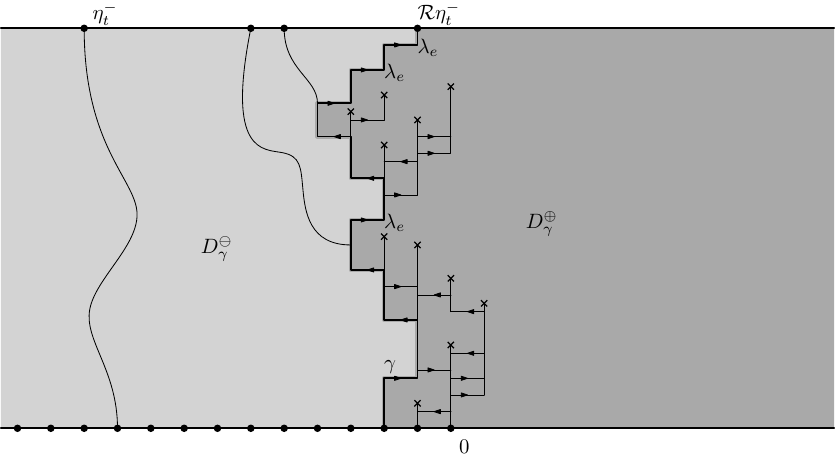}
\caption{%
Starting from the semi-infinite configuration $\eta_0=\Z_-$, the rightmost active path $\Gamma$ does not change if the graphical construction is changed to the left of it. This is because the event $\{\Gamma=\gamma\}$ is determined by the following conditions: $\gamma$ does not contain recovery marks, and its jumps happen on $\lambda_i$-open edges, except for the jumps taking place in times when no other active path exists to the right, in which case they must be $\lambda_e$-open, and active paths to the right of $\gamma$ will neither reach time $t$ nor connect back to $\gamma$ (otherwise they themselves would have been the rightmost active path). The configuration $\eta_s^-$ always has infinitely many infected sites to the left a.s.,\ therefore it does not have a leftmost site where the infection rate to the left would have been $\lambda_e$.
}
\label{fig:rightmost}
\end{figure}


Finally, notice that the random configuration $\zeta$ is distributed exactly as the collection of sites $x \in \Z$ such that there is an infinite $\lambda_i$-open path starting at $(x,t)$ and going backwards in time.
When we consider $\zeta \cap \Z_-$ instead, this is the same as restricting to sites $x \leq 0$.
Since it is easier to connect $(x,t)$ to $\Z_- \times \{0\}$ than to find an infinite backwards path, we conclude that $\Psi \eta^-_t$ dominates $\zeta \cap \Z_-$, which proves the lemma.
\end{proof}

\begin{proof}
[\proofname\ of Proposition~\ref{prop:existsinv}]
We adapt the argument in~\cite[\S3]{GalvesPresutti87}.
Consider the law of $ \Psi\eta^-_t $ at times $ t \in \N $.
Since $ \Sigma $ is a compact metric space, there is a subsequence $ (t_k)_k $ such that $ t_k \to +\infty $ and $ \Psi\eta^-_{t_k} \to \mu $ weakly as $ k\to\infty $, for some probability measure $ \mu $ on $ \Sigma $.

The proposition does not follow immediately from this, because the space $\Sigma^\odot \subseteq \Sigma$ is not itself compact, and moreover the transition probabilities of the process $ (\Psi\eta_t)_t $ are discontinuous at finite configurations.
That is, for fixed time $t>0$, in order to approximate transition probabilities after $t$ units of time, one may need to look arbitrarily far, and approximating a finite configuration by infinite configurations in the product topology can yield different distributions at time $t$.

However, it is still true that, for every $ t\geq 0 $ and every continuous bounded $ f:\Sigma \to\R $, the functional $ S_t f $ given by $ A \mapsto \E[f(\Psi\eta^A_t)] $ is continuous at infinite configurations $ A \in \Sigma^\odot $.

The construction in~\cite[\S3]{GalvesPresutti87} gives
\begin{align}
\label{eq:a1}
\Psi \eta^A_t \to \mu \text{ weakly as } t\to\infty, \text{ for every } A \in \Sigma^\odot
.
\end{align}
The argument is essentially that, since $\theta(\lambda_i,\lambda_e) > 0$, after some time the process will forget the initial configuration and will converge to the same $\mu$ for every $A$.
We refer to~\cite{GalvesPresutti87} for the details, keeping in mind that the assumption $ \lambda_e \leq \lambda_i $ provides attractiveness which is used there.

By~\eqref{eq:a1} and Lemma~\ref{lemma:domination}, we conclude that
\begin{align}
\label{eq:a2}
\mu(\Sigma^\odot) = 1
.
\end{align}
Since $ S_t f $ is continuous on $ \Sigma^\odot $ for every continuous $ f:\Sigma\to\R $, 
\eqref{eq:a1} and~\eqref{eq:a2} imply that $ \Psi\eta^\mu_t \sim \mu $ for every $ t \geq 0 $.
This concludes the proof of the proposition.
\end{proof}

\begin{proposition}
\label{prop:speedconstant}
Suppose $\lambda_e \leq \lambda_i$ and $\theta(\lambda_i,\lambda_e) > 0$.
Then
\begin{equation}
\nonumber
\E [\cR \eta^\mu_t] = \alpha(\lambda_i,\lambda_e) \cdot t
\end{equation}
for all $ t \geq 0 $, where $ \mu $ is given by Proposition~\ref{prop:existsinv}.
\end{proposition}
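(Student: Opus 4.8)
The plan is to show that $t \mapsto \E[\cR\eta^\mu_t]$ is additive, hence linear, and then identify the slope with $\alpha(\lambda_i,\lambda_e)$ using Lemma~\ref{lemma:gspeed}. First I would note that the quantity is well defined and finite: since $\mu$ is supported on $\Sigma^\odot$, we have $\cR\eta^\mu_0 = 0$, and $\cR\eta^\mu_t$ is dominated above by the position of a rate-$\lambda_e$ one-sided random walk (each rightward jump of the rightmost site happens at rate at most $\lambda_e$), so $\E[\cR\eta^\mu_t] \leq \lambda_e t < \infty$; a symmetric lower bound gives $\E[|\cR\eta^\mu_t|]<\infty$. Write $f(t) = \E[\cR\eta^\mu_t]$.

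For additivity, fix $s,t\geq 0$ and run the process using the graphical construction from \S\ref{sec:notation}. By the Markov property, $\eta^\mu_{s+t}$ can be obtained by first running to time $s$, then running the process $(\eta^{\eta^\mu_s}_{s,\,\cdot})$ from time $s$ onward. The key point is the shift-covariance of the dynamics: since the evolution rule depends on the configuration only through its shape relative to its own endpoints (the rightmost and leftmost infected sites), we have $\cR\eta^{B}_{s,s+t} = \cR B + \cR(\eta^{\Psi B}_{t}) $ in distribution for any fixed $B\in\Sigma^\ominus$ — more precisely, conditionally on $\eta^\mu_s$, the increment $\cR\eta^\mu_{s+t} - \cR\eta^\mu_s$ has the same distribution as $\cR\eta^{\Psi\eta^\mu_s}_{t}$, which is $\cR\eta^C_t$ with $C \sim \Psi\eta^\mu_s$. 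By Proposition~\ref{prop:existsinv}, $\Psi\eta^\mu_s \sim \mu$, so this increment has the same distribution as $\cR\eta^\mu_t$. Taking expectations (and using the finiteness just established to split the expectation),
\begin{equation}
\nonumber
f(s+t) = \E[\cR\eta^\mu_s] + \E\big[\,\E[\cR\eta^\mu_{s+t} - \cR\eta^\mu_s \mid \eta^\mu_s]\,\big] = f(s) + f(t).
\end{equation}
Since $f$ is additive and, being sandwiched between two linear functions, locally bounded, $f$ is linear: $f(t) = ct$ for some constant $c$.

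It remains to identify $c = \alpha(\lambda_i,\lambda_e)$. For this I would use the almost sure convergence from Lemma~\ref{lemma:gspeed}: for any fixed $B\in\Sigma^\ominus$ with $\theta(\lambda_i,\lambda_e)>0$, we have $t^{-1}\cR\eta^B_t \to \alpha(\lambda_i,\lambda_e)$ a.s. Conditioning on $\eta^\mu_0 = C$ for $\mu$-a.e.\ $C$ (which lies in $\Sigma^\odot\subseteq\Sigma^\ominus$), this gives $t^{-1}\cR\eta^\mu_t \to \alpha(\lambda_i,\lambda_e)$ a.s. Combined with $f(t)/t = c$ for all $t$, it suffices to upgrade the a.s.\ convergence to convergence in $L^1$, or equivalently to establish uniform integrability of $\{t^{-1}\cR\eta^\mu_t\}_{t\geq 1}$. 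This follows from the random-walk domination above: $|t^{-1}\cR\eta^\mu_t|$ is bounded by $t^{-1}$ times the absolute displacement of a rate-$\lambda_e$ random walk (plus a similar one-sided control from below via the leftmost site), and these are uniformly integrable. Hence $c = \lim_t f(t)/t = \alpha(\lambda_i,\lambda_e)$, completing the proof.

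The main obstacle is the increment identity in the additivity step: one must be careful that the claim ``$\Psi\eta^\mu_s \sim \mu$ and the dynamics is shift-covariant'' genuinely yields that $\cR\eta^\mu_{s+t}-\cR\eta^\mu_s$ is distributed as $\cR\eta^\mu_t$, rather than merely that $\Psi\eta^\mu_{s+t}\sim\mu$. The point is that Proposition~\ref{prop:existsinv} is a statement about the process seen from the rightmost site, and the displacement of that site is exactly the ``extra'' information not captured by $\Psi$; the conditional distribution of the displacement given $\eta^\mu_s$ depends only on $\Psi\eta^\mu_s$ by the translation invariance of the graphical construction, and then one averages over $\Psi\eta^\mu_s\sim\mu$. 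Everything else — finiteness, linearity from additivity, and the uniform-integrability upgrade — is routine given the random-walk comparison and Lemma~\ref{lemma:gspeed}.
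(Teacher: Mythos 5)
Your additivity/stationary-increments step and your use of Lemma~\ref{lemma:gspeed} to identify the slope follow the same route as the paper, but there is a genuine gap where you declare the integrability ``routine''. The upper bound $\E[\cR\eta^\mu_t]\leq\lambda_e t$ is fine, but there is no ``symmetric lower bound'': the front $\cR\eta^\mu_t$ does not move like a nearest-neighbour walk in the downward direction. When the rightmost infected site recovers, $\cR\eta_t$ drops by the gap $\cS\eta_t$ to the second rightmost infected site, which is unbounded, and nothing established so far gives integrability of this gap under $\mu$ or under the evolved law. Likewise your parenthetical appeal to ``control from below via the leftmost site'' is vacuous here, since $\mu$ is supported on $\Sigma^\odot\subseteq\Sigma^\ominus$ and these configurations have no leftmost site. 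Consequently neither $\E[|\cR\eta^\mu_t|]<\infty$ nor the uniform integrability of $\{t^{-1}\cR\eta^\mu_t\}$ is justified, and both your splitting of the expectation in the additivity step and your $L^1$ upgrade of the a.s.\ convergence rest on it.

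The paper sidesteps exactly this issue. It only uses that the positive part of $\cR\eta^\mu_1$ is integrable (the Poisson upper bound), so that stationarity of the increments gives $t^{-1}\E[\cR\eta^\mu_t]=\E[\cR\eta^\mu_1]\in[-\infty,\lambda_e]$, with $-\infty$ explicitly allowed; then Birkhoff's Ergodic Theorem (valid when one tail is integrable) yields $n^{-1}\cR\eta^\mu_n\to V$ a.s.\ with $\E V=\E[\cR\eta^\mu_1]$, and since Lemma~\ref{lemma:gspeed} forces $V=\alpha(\lambda_i,\lambda_e)$ a.s., the finiteness $\E[\cR\eta^\mu_1]=\alpha(\lambda_i,\lambda_e)$ comes out a posteriori rather than being assumed. (The same care appears in Proposition~\ref{prop:critinvspeedzero}, where the paper again allows the value $-\infty$.) To repair your write-up, either adopt this ergodic-theorem argument in place of the uniform-integrability claim, or supply a genuine proof that the gap $\cS$ has finite mean under the relevant laws -- which is not routine and is precisely what the paper's argument avoids.
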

\begin{proof}
Note that $ \cR \eta^\mu_t $ has stationary increments.
Thus, $ t^{-1} \E [\cR \eta^\mu_t] = \E [\cR \eta^\mu_1] \in [-\infty,\lambda_e]$ and, by the Ergodic Theorem, $ n^{-1} \cR \eta^\mu_n \to V $ a.s.,\ for (possibly random) $ V $ with $ \E V = \E [\cR \eta^\mu_1] $.
On the other hand, by Lemma~\ref{lemma:gspeed}, $ n^{-1} \cR \eta^\mu_n \to \alpha(\lambda_i,\lambda_e) $ a.s.,\ concluding the proof.
\end{proof}


\begin{proposition}
\label{prop:pathdomination}
Let $ \lambda_e \leq \lambda_i $ and suppose $\theta(\lambda_i,\lambda_e) > 0$, or that $ \lambda_e=\lambda_i=\lambda_c $.
If $ \zeta $ denotes a random configuration in $ \Sigma^\odot $ with law $ \mu $, then $ \Xi\zeta \succcurlyeq T\zeta $.
\end{proposition}

\begin{proof}
We will work with a finite time $t$ and later apply Proposition~\ref{prop:existsinv}.
Fix $ t >0 $.
Let $ \Gamma $ denote the rightmost active path connecting $ \eta^-_0 $ at time $ 0 $ to $ \eta^-_t $ at time $ t $.
As seen in the proof of Lemma~\ref{lemma:domination}, the conditional distribution of $\eta^-_t$ given the event $\{\Gamma = \gamma\}$ is the same as the unconditioned distribution of the set of sites $x \leq \gamma_t$ such that there is a $\lambda_i$-open path starting at $(x,t)$, going backwards in time, and ending at either $\Z \times \{0\}$ or at a point of $\gamma$ (the site $x=\gamma_t$ trivially satisfies this condition because $(x,t)$ is already in $\gamma$).
Denote this distribution by $\nu^{\gamma}$.

Define the re-centered curve $(\bar\gamma_s)_{s\in[0,t]}$ by $\bar\gamma_s = \gamma_s - \gamma_t$.
Since $\Gamma_t = \cR \eta^-_t $, by translation invariance, the conditional distribution of $ \Psi\eta^-_t $ given that $\{\Gamma=\gamma\}$ equals $\nu^{\bar\gamma}$.

We now consider the configuration $ \Xi\eta^-_t $, which equals $ \eta^-_t \cup \{\cR \eta^-_t +1\} $.
To give a similar description as before, we define the path $ (\gamma^+_s)_{s\in[0,t]} $ given by
\[
\gamma^+_s =
\begin{cases}
\gamma_s ,& s \in [0,t), \\
\gamma_t+1 ,& s=t.
\end{cases}
\]
Note that
the conditional distribution of $ \Xi\eta^-_t $ given $\{\Gamma=\gamma\}$ is the same as the unconditioned distribution of the set of sites $x \leq \gamma_t+1$ such that there is a $\lambda_i$-open path starting at $(x,t)$, going backwards in time, and ending at either $\Z \times \{0\}$ or at a point of $\gamma^+$ 
(the sites $x=\gamma_t$ and $x=\gamma_t+1$ trivially satisfy this condition because the corresponding points $(x,t)$ are already in $\gamma^+$).

As before, we define the re-centered curve $(\bar\gamma^+_s)_{s\in[0,t]}$ by $\bar\gamma^+_s = \gamma^+_s - \gamma^+_t$.
Since $\gamma^+_t = \cR( \Xi\eta^-_t)$, by translation invariance, the conditional distribution of $ \Psi \Xi \eta^-_t$ given that $\{\Gamma=\gamma\}$ equals $\nu^{\bar\gamma^+}$.

To obtain domination, notice that ${\bar\gamma^+} \leq {\bar\gamma}$, that is, the path ${\bar\gamma^+}$ is to the left of the path ${\bar\gamma}$.
Indeed, ${\bar\gamma^+_t} = {\bar\gamma_t} = 0$ and ${\bar\gamma^+_s} = {\bar\gamma_s} - 1$ for $ s \in [0,t) $.
So for $x \leq 0$, it is easier to find a backward $\lambda_i$-open path from $(x,t)$ to ${\bar\gamma^+}$ than from $(x,t)$ to ${\bar\gamma}$.
Hence, $\nu^{\bar\gamma^+} \succcurlyeq \nu^{\bar\gamma}$.
Integrating over $\gamma$ we get $ \Psi \Xi \eta^-_t \succcurlyeq \Psi \eta^-_t $.
Since $ \Xi \Psi = T \Psi \Xi $, this gives $ \Xi \Psi \eta^-_t \succcurlyeq T \Psi \eta^-_t $.

We now let $t \to \infty$.
By Proposition~\ref{prop:existsinv}, $ \Psi\eta^-_t \to \zeta $ in distribution.
Since $\Xi$ and $ T $ are continuous and preserve monotonicity, this gives $\Xi \zeta \succcurlyeq T \zeta$, proving the proposition.
\end{proof}

We now move to the last result of this section.
For the classical contact process with critical parameter $\lambda = \lambda_c$ (where it is known that $\alpha=0$), we would like a statement similar to Lemma~\ref{lemma:speed}, regarding both the mean and almost-sure behavior of the position of the rightmost infected site, starting from the invariant measure $\mu$.
However, Lemma~\ref{lemma:speed} is proved using subadditivity and only works for the extreme initial configuration $\eta_0 = \Z_-$.
In fact, since $\theta(\lambda_c,\lambda_c)=0$, one can easily construct infinite initial configurations $A \in \Sigma^\ominus$ for which ${t^{-1}}\,{\cR \eta_t^A} \to -\infty$ a.s.\ as $t\to\infty$, by taking $A$ sparse enough (this in contrast with how Proposition~\ref{prop:speedconstant} was obtained as an immediate consequence of Lemma~\ref{lemma:gspeed}).

The following proposition answers this question about the classical contact process at criticality, and we believe it is of independent interest.

\begin{proposition}
\label{prop:critinvspeedzero}
For $\lambda_i=\lambda_e=\lambda_c$,
\(
\lim\limits_{t \to \infty} \frac{\cR \eta^\mu_t}{t}
=
\E[\cR \eta_1^\mu]
=
\alpha(\lambda_c,\lambda_c)
\)
a.s.
\end{proposition}

\begin{proof}
As in the proof of Proposition~\ref{prop:speedconstant}, 
there is $ \beta \in [-\infty,\lambda_c] $ such that $ \E[\cR\eta^\mu_t] = \beta t $ for all $ t \geq 0 $ and
$
\frac{\cR \eta^\mu_t}{t} \to
V
$
a.s.,\ where $ V $ is such that $ \E V = \beta $.
However, here we cannot use Lemma~\ref{lemma:gspeed}.
Below we will show that $ V = \beta $ a.s.\ and that
$ \beta=\alpha $, where $ \alpha = \alpha(\lambda_c,\lambda_c) = 0 $.

Since $ \eta^\mu_t \subseteq \eta^-_t $,
we have
\[
V=
\lim_t \frac{\cR \eta^\mu_t}{t}
\leq \lim_t \frac{\cR \eta^-_t}{t}
=
\alpha
\text{  \ a.s.}
,
\]
where the last limit holds by Lemma~\ref{lemma:speed}.
So it is enough to prove that $ \beta \geq \alpha $.

For $ A \in \Sigma^\ominus $, let $ \cS A \in \N $ denote the distance between the rightmost and second rightmost site in $ A $.
We first note that
\[
\tfrac{\dd}{\dd t} \E[\cR\eta^\nu_t] = \lambda_c - \E[\cS \eta^\nu_t]
\]
for every measure $ \nu $ on $ \Sigma^\odot $.
More precisely,
if $ \E|\cR\eta^\nu_t| < \infty $ and $ \E[\cS \eta^\nu_t] < \infty $, then the above equation holds,
and
if $ \E[\cR\eta^\nu_t] = -\infty $ or $\E[\cS \eta^\nu_t]=+\infty$ then $ \E[\cR\eta^\nu_s] = -\infty $ for all $ s > t $.
This follows from the fact that $\cR\eta_t$ jumps by $+1$ at rate $\lambda_c$ and jumps by $- \cS \eta_t$ at rate $1$.

In particular, since $ \E[\cR\eta^\mu_t] = \beta t $ for every $t \geq 0$, we have that
\[
\lambda_c - \E[\cS \eta^\mu_t] = \beta
\]
for every $t \geq 0$.

On the other hand, by Proposition~\ref{prop:existsinv}, $ \Psi \eta^-_t {\to} \mu $ in distribution as $ t \to \infty $, hence
\[
\liminf_t
\E[\cS \eta^-_t]
\geq
\E[\cS \eta^\mu_t]
\]
by Fatou's lemma. (We believe $\E[\cS \eta^-_t] \to \E[\cS \eta^\mu_t]$ but the above inequality will be enough.)

Estimating the mean drift for the extreme initial configuration, we get
\[
\limsup_t
\tfrac{\dd}{\dd t} \E[\cR\eta^-_t]
=
\lambda_c -
\liminf_t
\E[\cS \eta^-_t]
\leq
\lambda_c - 
\E[\cS \eta^\mu_t]
=
\beta
.
\]

Finally, by~Lemma~\ref{lemma:speed},
\[
\frac{\E [\cR \eta_t^-]}{t}
\to
\alpha
\]
as $t \to \infty$.
Moreover, for all $t > 0$,
$
\frac{\E [\cR \eta_t^-]}{t}
\geq
\alpha
,
$
and since
$
\frac{\E [\cR \eta_t^-]}{t}
\leq
\lambda_c
$
we conclude that $\E |\cR \eta_t^-|<\infty$.

Combining this with the estimate on the mean drift, we get
\[
\alpha =
\lim_t
\frac{\E[\cR\eta^-_t]}{t}
\leq \beta
,
\]
which concludes the proof.
\end{proof}

\section{Zero speed at criticality}
\label{sec:critnospeed}

In this section we prove Theorem~\ref{thm:critnospeed}.
It is possible to prove this theorem rather easily once it is known that $\theta(\lambda_i, \lambda^e_*(\lambda_i))=0$ and $\theta(\lambda^i_*(\lambda_e), \lambda_e)=0$.
This is proved in Chapter~2 of~\cite{Terra24}, where the proof is an adaptation of~\cite{BezuidenhoutGrimmett90}. A more general result of this nature is available in \cite{BezuidenhoutGray94}. However all these references rely on the rather  involved dynamic renormalization technique. For this reason we provide a different proof.

The general structure of the proof is a classical block argument.
However, the specifics of this model bring a number of complications.
In particular, the building block is more subtle to define and analyze because whether the infection spreads through a given path may depend on the
configuration outside this path.

Assuming that $ \lambda_e \leq \lambda_i $ and $ \alpha(\lambda_i,\lambda_e)>0 $, we will derive a finite condition (a condition that depends on a bounded space-time box in the graphical construction) which in turn implies that $ \theta(\lambda_i,\lambda_e)>0 $.
Since said condition is finite, it is still satisfied for slightly smaller values of the parameters.
This way we can conclude that
$ \lambda_e > \lambda_*^e(\lambda_i) $
and
$ \lambda_i > \lambda_*^i(\lambda_e) $,
and this chain of implications combined with Lemmas~\ref{lemma:rightcontinuous} and~\ref{lemma:nonnegative} proves the theorem.

We call \emph{domain} any subset $ D \subseteq \Z \times \R $ of the form
$$ \{ (x,t) : t_0 \leq t \leq t_1, L(t) \leq x \leq R(t) \} $$
for some $ t_0 \leq t_1 \in \R $ and $ L,R:[t_0,t_1] \to \Z $ such that $ L(t) \leq R(t) $ for all $ t\in (t_0,t_1] $. The \emph{bottom} of $ D $ is given by the region $ \{ (x,t_0) : L(t_0) \leq x \leq R(t_0) \} $, the \emph{top} is defined analogously.

\begin{definition}
[Open branch]
A \emph{$ (\lambda_i,\lambda_e) $-open branch $ \mathcal{O} $ in a domain $ D $} is a collection of paths $ \gamma $ contained in $ D $ such that each path $ \gamma \in \mathcal{O} $ satisfies:
\begin{enumerate}
[$ (i) $]
\item
$ \gamma $ starts at the bottom of $ D $,
\item
$ \gamma $ contains no recovery marks,
\item
there is a $ \lambda_i $-mark at each jump of $ \gamma $,
\item
\label{item:condiv}
there is a $ \lambda_e $-mark at each jump of $ \gamma $ that points in a direction not occupied by other paths in $ \mathcal{O} $.
\end{enumerate}
For each $ t \in (t_0,t_1] $ denote $ \cO_t = \{ x : \exists \gamma\in\cO, \gamma(t)=x\} $. The points $ \cR \cO_t $ and $ \cL \cO_t $ denote the rightmost and leftmost points attained by this branch at time $ t $.
With this notation, condition~\ref{item:condiv} above reads as follows: for each $ \gamma \in \cO $, if a given jump of $ \gamma $ corresponds to a jump of $ \cL\cO_t $ to the left or a jump of $ \cR\cO_t $ to the right, then there is a $ \lambda_e $-mark corresponding to that jump of $ \gamma $.

An open branch is \emph{complete} if it has a path which starts at the bottom of $ D $ and reaches the top of $ D $.
\end{definition}

\begin{figure}[b!]
\hfil
\includegraphics[page=1,width=.95\textwidth]{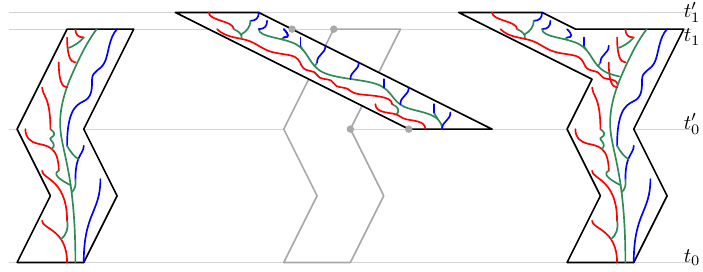}
\caption{%
Open branches in $ D $ (left), $ D' $ (center), and the grafting of $ D' $ onto $ D $ (right).
Blue paths only jump to the right, and are $ \lambda_e $-open.
Red paths only jump to the left, and are $ \lambda_e $-open.
Green paths jump in both directions and are $ \lambda_i $-open.
For each open branch, at each instant in time, the rightmost site of the branch is occupied by a blue path and the leftmost site of the branch is occupied by a red path.
Thin light gray lines indicate $t_0, t_0', t_1, t_1'$.
A grayed version of $D$ behind $D'$ illustrates their relative position, and the four gray dots illustrate conditions
$ R(t_0') \leq L'(t_0') $,
and
$ R'(t_1) \leq L(t_1) $.
\hfill (color online)
}
\label{fig:crossinglemma}
\end{figure}

As a side remark, for $ \lambda_e \leq \lambda_i $, a subset of an open branch need not be an open branch, but the union of two open branches in $ D $ is an open branch.

We now describe a way to concatenate open branches, see Figure~\ref{fig:crossinglemma}.

\begin{definition}
[Crossing domains and grafting]
Given two domains $ D $ and $ D' $ (with associated $ t_0,t_1,L,R $ and $ t_0',t_1',L',R' $, respectively), we say that $ D' $ \emph{crosses $ D $ from right to left} if
$ t_0 \leq t_0' \leq t_1 \leq t_1' $,
$ R(t_0') \leq L'(t_0') $,
and
$ R'(t_1) \leq L(t_1) $.
If $ D' $ crosses $ D $ from right to left, we define the \emph{grafting of $ D' $ onto $ D $} as the domain $ D'' $ defined as follows.
Take
$ t_0''=t_0 $,
$ t_1''=t_1' $,
\[
L''(t) =
\begin{cases}
L(t) , & t_0 \leq t \leq t_0'
,
\\
L(t) \wedge L'(t), & t_0' < t < t_1
,
\\
L'(t), & t_1 \leq t \leq t_1'
,
\end{cases}
\]
and
\[
R''(t) =
\begin{cases}
R(t) , & t_0 \leq t \leq t_0'
,
\\
R(t) \vee R'(t), & t_0' < t < t_1
,
\\
R'(t), & t_1 \leq t \leq t_1'
.
\end{cases}
\]
See Figure~\ref{fig:crossinglemma}.
\end{definition}

\begin{lemma}
[Grafting lemma]
\label{lemma:crossing}
Suppose a domain $ D' $ crosses a domain $ D $ from right to left.
For $ \lambda_e \leq \lambda_i $, if there are complete open branches in $ D $ and in $ D' $, then there is a complete open branch in the grafting of $ D' $ onto $ D $.
\end{lemma}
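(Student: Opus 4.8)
The plan: since $D\subseteq D''$ and the two domains have the same bottom, $\mathcal{O}$ is already an open branch in $D''$, only not complete, as its paths end at time $t_1<t_1''$. I will enlarge it by grafting on the part of $\mathcal{O}'$ lying above a well-chosen time $s$, prepending each surviving path of $\mathcal{O}'$ with a path of $\mathcal{O}$ so that it starts at the bottom of $D''$.

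To produce $s$, fix a complete path $\gamma\in\mathcal{O}$ (defined on $[t_0,t_1]$) and a complete path $\gamma'\in\mathcal{O}'$ (defined on $[t_0',t_1']$). Since $\gamma$ stays in $D$ and $\gamma'$ in $D'$, the crossing hypotheses give $\gamma(t_0')\leq R(t_0')\leq L'(t_0')\leq\gamma'(t_0')$ and $\gamma'(t_1)\leq R'(t_1)\leq L(t_1)\leq\gamma(t_1)$, so $\gamma-\gamma'$ is $\leq0$ at $t_0'$ and $\geq0$ at $t_1$. Because the marks of the graphical construction are a.s.\ pairwise distinct, $\gamma$ and $\gamma'$ never jump at the same time while occupying distinct sites, so $\gamma-\gamma'$ moves by unit steps; being càdlàg it must vanish at some $s\in[t_0',t_1]$, and one may take $s$ to be, say, the last such time.

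Next, let $\mathcal{O}''$ consist of $\mathcal{O}$ together with every path that agrees on $[t_0,s]$ with some $\gamma_1\in\mathcal{O}$ and agrees at all later times with some $\gamma_2\in\mathcal{O}'$, so that necessarily $\gamma_1(s)=\gamma_2(s)$. Conditions (i)--(iii) and containment in $D''$ are routine: each such path starts at the bottom of $D=D''$; none meets a recovery mark; each of its jumps is a jump of $\gamma_1$ or of $\gamma_2$ (the junction at $s$, when present, being a jump of $\gamma_1$), hence carries a $\lambda_i$-mark; and it lies in $D''$ since $D$ and the part of $D'$ at times $\geq s$ are both contained in $D''$ while the switch takes place at $s\in[t_0',t_1]$. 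Completeness holds because the path equal to $\gamma$ on $[t_0,s]$ and to $\gamma'$ afterwards belongs to $\mathcal{O}''$ (here $\gamma(s)=\gamma'(s)\in\mathcal{O}_s$) and runs from the bottom to the top of $D''$.

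The substance of the lemma is condition (iv). A jump at a time $t\leq s$ is free: a jumping path of $\mathcal{O}$ that lands on a site unoccupied by the other paths of $\mathcal{O}''\supseteq\mathcal{O}$ also lands on a site unoccupied by the other paths of $\mathcal{O}$, so condition (iv) for $\mathcal{O}$ already supplies the $\lambda_e$-mark; and a jumping grafted path has its underlying $\gamma_1\in\mathcal{O}''$ sitting on its own site for $t\leq s$, so its landing site is in fact occupied and no mark is needed. The obstacle is jumps of grafted paths at times $t>s$: such a jump is a jump of $\gamma_2\in\mathcal{O}'$, and one would like to invoke condition (iv) for $\mathcal{O}'$, but $\mathcal{O}''$ discards the ``orphan'' paths of $\mathcal{O}'$ — those not on a site of $\mathcal{O}_s$ at time $s$ — and an orphan could occupy the landing site in $\mathcal{O}'$, so that $\mathcal{O}'$ needs no mark there, while being absent from $\mathcal{O}''$, so that $\mathcal{O}''$ would. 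This orphan issue is where I expect the real difficulty to lie. I would attack it through the crossing geometry: before the footprints of $\mathcal{O}$ and $\mathcal{O}'$ first meet, that of $\mathcal{O}'$ lies entirely on one side of that of $\mathcal{O}$ (at $t_0'$ it is weakly to the right of $R(t_0')\geq\mathcal{R}\mathcal{O}_{t_0'}$), which should force any $\mathcal{O}'$-path able to shield a boundary jump of $\mathcal{O}''$ after time $s$ to have entered $\mathcal{O}$'s footprint in time to be retained. Making this precise — perhaps by taking $s$ to be the first time the two footprints meet, with a connectivity argument to keep the grafted branch complete, and by following the ``rightmost'' and ``leftmost'' $\lambda_e$-open paths of the two branches as in Figure~\ref{fig:crossinglemma} — is the technical core of the argument.
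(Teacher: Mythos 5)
Your construction has a genuine gap, and it is exactly at the point you flag yourself: condition (iv) for jumps after the junction time $s$. Because you retain only those paths of $\cO'$ that sit on the footprint of $\cO$ at the single time $s$, the branch $\cO''$ loses the ``orphan'' paths of $\cO'$; a retained path can later make a jump which in $\cO'$ was shielded by an orphan (so $\cO'$ needed no $\lambda_e$-mark there) but which in $\cO''$ becomes a jump of $\cR\cO''$ to the right or of $\cL\cO''$ to the left with no mark available. This is not a removable technicality of your construction: since $D'$ may well extend to the right of $D$ at time $s$, any path of $\cO'$ lying outside $D$ at that time is automatically an orphan, and nothing in the crossing geometry prevents it from being the path that shields later boundary jumps. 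Your closing sketch (taking $s$ to be the first meeting time of the footprints, following extremal $\lambda_e$-open paths) is not carried out, so the proof is incomplete at its core step. A smaller blemish: your intermediate-value argument for the existence of $s$ invokes almost-sure distinctness of marks, so your conclusion would hold only a.s., whereas the lemma is a deterministic statement and the paper's argument needs no such input.

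The paper avoids the orphan problem by never discarding anything: it keeps all of $\cO$, fixes one complete path $\gamma^*\in\cO$, and for every $\gamma'\in\cO'$ forms the path equal to $\gamma^*$ up to $t_0'$, to the pointwise minimum $\gamma^*\wedge\gamma'$ on $(t_0',t_1]$, and to $\gamma'$ afterwards; the crossing inequalities $\gamma^*(t_0')\leq R(t_0')\leq L'(t_0')\leq\gamma'(t_0')$ and $\gamma'(t_1)\leq R'(t_1)\leq L(t_1)\leq\gamma^*(t_1)$ ensure these are still unit-jump paths, with no meeting time needed. Then $\cO''_t=\cO_t$ before $t_0'$, $\cO''_t=\cO'_t$ after $t_1$, and on the overlap $\cR\cO''_t=\cR\cO_t$ and $\cL\cO''_t=\cL\cO_t\wedge\cL\cO'_t$; consequently every jump of $\cR\cO''$ to the right or of $\cL\cO''$ to the left is such a jump for $\cO$ or for $\cO'$ along the same edge and inherits its $\lambda_e$-mark, while all other jumps are jumps of paths of $\cO$ or $\cO'$ and carry $\lambda_i$-marks. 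Replacing your single-time concatenation by this ``minimum with a fixed complete path of $\cO$, keep everything'' device is the missing idea.
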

\begin{proof}
Let $ \cO $ and $ \cO' $ be complete open branches in $ D $ and $ D' $.
We will describe how to construct a complete open branch $ \cO'' $ in the grafting $ D'' $ of $ D' $ onto $ D $.
A look at Figure~\ref{fig:crossinglemma} may help understand how $ \cO'' $ is constructed.

Let $ t_0 \leq t_0' \leq t_1 \leq t_1' $ denote the start and end times of $ D $ and $ D' $.

Fix some path $ \gamma^* \in \cO $ that starts at the bottom of $ D $ and ends at the top of $ D $.
For each $ \gamma' \in \cO' $, write $ I $ for its time span and define $ \gamma'' $ by taking, for each $ t \in [t_0,t_0'] \cup I $,
\[
\gamma''(t) =
\begin{cases}
\gamma^*(t), & t \leq t_0',
\\
\gamma^*(t) \wedge \gamma'(t), & t \in (t_0',t_1] \cap I,
\\
\gamma'(t), & t \in (t_1,t_1'] \cap I.
\end{cases}
\]
Let $ \cO'' $ be the collection of all $ \gamma'' $ obtained this way, as well as all $ \gamma \in \cO $.

To prove the lemma, it is enough to check that $ \cO'' $ is a complete open branch in $ D'' $.
The key observation is that $ \cL \cO_t'' = \cL \cO_t \wedge \cL \cO_t' $ and $ \cR\cO_t'' = \cR\cO_t $ for every $ t \in [t_0',t_1] $.
Hence, each time $ \cL\cO''_t $ jumps to the left, it corresponds to $ \cL \cO_t $ or $ \cL \cO_t' $ jumping to the left, and in either case there is a corresponding $ \lambda_e $-open edge in the graphical construction.
Likewise, jumps of $ \cR\cO''_t $ to the right correspond to a $ \lambda_e $-open edge.
Other jumps correspond to a jump of a path in $ \cO $ or $ \cO' $, which in turn occur at a $ \lambda_i $-open edge.
For times $ t \leq  t_0' $, we have $ \cO''_t = \cO_t $, and for times $ t > t_1 $ we have $ \cO''_t = \cO'_t $.
It follows that $ \cO'' $ satisfies condition~\ref{item:condiv} above.
The other conditions are more immediate to verify, and we omit the tedious details.
\end{proof}

Given $ \alpha \ne 0 $, $ k>0 $ and $ L \in \N $, define the domain
\[
D_{L,k,\alpha} := \big\{ (x,t) \in \Z\times\R : t \in [0,\tfrac{k L}{|\alpha|}], |x-\alpha t| \leq L \big\}
.
\]

Denote by $ \sE_{L,k,\alpha,\lambda_i,\lambda_e} $ the event that $ D_{L,k,\alpha} $ has a complete $ (\lambda_i,\lambda_e) $-open branch.
Our building block is the following.

We leave $ k $ as a free parameter because the next lemma is already hard to visualize with $ k=\frac{5}{2} $, but later on we will use it with $ k=20 $.

\begin{lemma}
\label{lemma:block}
Suppose $ \lambda_e \leq \lambda_i $ and $ \alpha=\alpha(\lambda_i,\lambda_e)>0 $.
Then, for each $ k\in\N $ fixed, $ \Pb(\sE_{L,k,\alpha,\lambda_i,\lambda_e}) \to 1 $ as
$ L \to \infty,\ L \in 4\N $.
\end{lemma}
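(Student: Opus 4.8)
The plan is to build the complete open branch in $D_{L,k,\alpha}$ out of the process started from a half-line, using the definition of $\alpha$ as the asymptotic speed of $\cR\eta^-_t$ together with an analogous lower bound on $\cL\eta^+_t$, and then intersecting the two. First I would observe that the right and left boundaries of a supercritical (or merely positive-speed) process can each be tracked by a single path: if $\eta^0_t\neq\emptyset$ then $\cR\eta^-_t=\cR\eta^0_t$ is attained by a $(\lambda_i,\lambda_e)$-active path that only ever jumps right at $\lambda_e$-open edges (the rightmost active path), and symmetrically for $\cL\eta^+_t$. These two extremal paths, together with all the active paths of $\eta^-$ lying between $\cL\eta^-_t$ and $\cR\eta^-_t=\cR\eta^0_t$ restricted to the slab, are exactly the kind of collection that forms an open branch in the sense of the definition: interior jumps use $\lambda_i$-marks, and the only jumps that push the rightmost (resp.\ leftmost) occupied site outward are jumps of the extremal path, which sit on $\lambda_e$-open edges. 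So the real content is a quantitative statement: with probability tending to $1$ as $L\to\infty$, the process started from $\Z_-$ (shifted to start at the bottom of the domain) stays inside the spatial window $|x-\alpha t|\le L$ for all $t\in[0,kL/\alpha]$ \emph{and} its rightmost site travels from near the left edge of the bottom to near the right edge of the top, i.e.\ it crosses the domain.

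The key steps, in order, would be: (1) By Lemma~\ref{lemma:gspeed} applied to $B=\Z_-$ (or directly Lemma~\ref{lemma:speed}), $t^{-1}\cR\eta^-_t\to\alpha$ a.s.; by left-right symmetry of the graphical construction, $t^{-1}\cL\eta^+_t\to-\alpha$ a.s. Since $\alpha>0$, for the time horizon $T_L:=kL/|\alpha|$ we get $\cR\eta^-_{T_L}=\alpha T_L(1+o(1))=kL(1+o(1))$ and similarly $\cL\eta^+_{T_L}=-kL(1+o(1))$, and moreover, by a uniform (Egorov-type) strengthening or by a maximal/ballot-type estimate, $\sup_{0\le t\le T_L}|\cR\eta^-_t-\alpha t|$ and $\sup_{0\le t\le T_L}|\cL\eta^+_t+\alpha t|$ are both $o(L)$ with probability $\to 1$ as $L\to\infty$ (this is where $k\in\N$ being fixed matters: the window half-width $L$ and the horizon $kL/\alpha$ scale together, so a single a.s.\ limit controls the whole time interval after rescaling time by $L$). (2) From these two one-sided controls, place the box $D_{L,k,\alpha}$ so that its bottom is centered appropriately and deduce that, on an event of probability $\to1$, the process $\eta^-$ shifted to start at the bottom-left region has all its occupied sites inside $|x-\alpha t|\le L$ throughout $[0,T_L]$, while its rightmost site reaches the right portion of the top. (3) Extract the open branch: take the rightmost active path of this process as the ``blue'' path, the leftmost active path as the ``red'' path, and fill in with interior active paths; verify conditions $(i)$–$(iv)$ of the open-branch definition, the outward-jump condition being exactly the statement that the extremal paths jump outward only on $\lambda_e$-open edges, which holds by construction of the process (the rightmost infected site infects its right neighbor at rate $\lambda_e$). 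Conclude $\Pb(\sE_{L,k,\alpha,\lambda_i,\lambda_e})\to1$.

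The main obstacle I expect is step~(1): upgrading the a.s.\ convergence $t^{-1}\cR\eta^-_t\to\alpha$ to the uniform-in-$t$ deviation bound $\Pb\big(\sup_{0\le t\le kL/\alpha}|\cR\eta^-_t-\alpha t|> \delta L\big)\to 0$ for every $\delta>0$. One clean way is to rescale: let $f_L(s):=L^{-1}\cR\eta^-_{Ls/\alpha}$ for $s\in[0,k]$; the a.s.\ limit $t^{-1}\cR\eta^-_t\to\alpha$ says $f_L(s)\to s$ pointwise a.s., and since $s\mapsto \cR\eta^-_{\cdot}$ is only controlled in a weak (integrated) sense one needs monotonicity/subadditivity of $\cR\eta^-_t$ along with the convergence at the single endpoint $s=k$ to squeeze the whole path—a standard Dini-type argument for monotone-ish limits, but one must be careful that $t\mapsto\cR\eta^-_t$ is not monotone, only $\alpha_t/t$ is. The honest route is to use $\cR\eta^-_{t+s}\le \cR\eta^-_t + (\cR\eta^-_{t+s}-\cR\eta^-_t)$ together with stationarity of increments under the measure $\mu$ from Proposition~\ref{prop:existsinv} (and Proposition~\ref{prop:speedconstant}), reducing the fluctuation bound to a maximal inequality for a process with stationary increments and finite mean speed, which gives the required uniform control. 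A secondary, more bookkeeping-type obstacle is checking carefully that the collection of active paths one selects genuinely satisfies condition~\eqref{item:condiv}; but as with the grafting lemma this is a matter of tedious verification rather than a real difficulty, and I would state it and omit the details.
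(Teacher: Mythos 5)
Your step (1) is not where the difficulty lies, and your step (3) is where the argument actually breaks. On (1): since the tolerance in the event you need is of order $L$ (i.e.\ proportional to the time horizon $kL/\alpha$, with $k$ fixed), the a.s.\ convergence $t^{-1}\cR\eta^-_t\to\alpha$ already gives $\Pb\big(\sup_{t\le T}|\cR\eta^-_t-\alpha t|\le\varepsilon T\big)\to1$: beyond a random finite time the deviation is at most $\varepsilon t\le \varepsilon T$, and on the initial segment it is an a.s.\ finite random variable, eventually smaller than $\varepsilon T$. No maximal inequality or stationary-increments argument via $\mu$ is needed; this is exactly how the paper handles it.

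The genuine gap is the use of a \emph{single} left-edge process. The domain $D_{L,k,\alpha}$ is tilted: at time $t$ its spatial window is $[\alpha t-L,\alpha t+L]$, drifting right at speed $\alpha$, while the left edge of a process started from a right half-line drifts \emph{left} at speed $\alpha$. Hence your ``red'' path $\cL\eta^+_t$ stays inside the domain only for a time of order $L/\alpha$, a fraction $\sim 1/(2k)$ of the full time span $kL/\alpha$ (the lemma is used with $k=20$). After it exits, the leftmost point of your branch is carried by interior active paths of $\eta^-$, whose leftward jumps are only guaranteed $\lambda_i$-marks, so condition~\eqref{item:condiv} fails; nor can you simply truncate paths at the moving left wall, since paths in a branch must start at the bottom of $D$ and there is no control ensuring the region near the drifting left boundary remains covered by bottom-rooted, recovery-free paths. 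The paper's proof resolves exactly this by combining $n+1=2k+1$ events: one right-edge event $\sE'_{\varepsilon,T,3\ell}$ providing the blue path for the whole time span, and $n=2k$ events $\sE''_{\varepsilon,T,(4j-3)\ell}$ for staggered right half-lines, whose left edges successively take over as the leftmost paths of the branch (each valid during an overlapping time window of length $\sim 4\ell/\alpha$), glued together with interior ($\lambda_i$-open) paths as in Figure~\ref{fig:block2}. Your construction as written would only prove the statement for a domain whose duration is comparable to $L/\alpha$, not $kL/\alpha$ for arbitrary fixed $k$; to repair it you must introduce the staggered family of left-edge processes (or an equivalent device) and verify condition~\eqref{item:condiv} across the hand-offs.
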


\begin{figure}[b!]
\hfil
\includegraphics[page=1,width=0.95\textwidth]{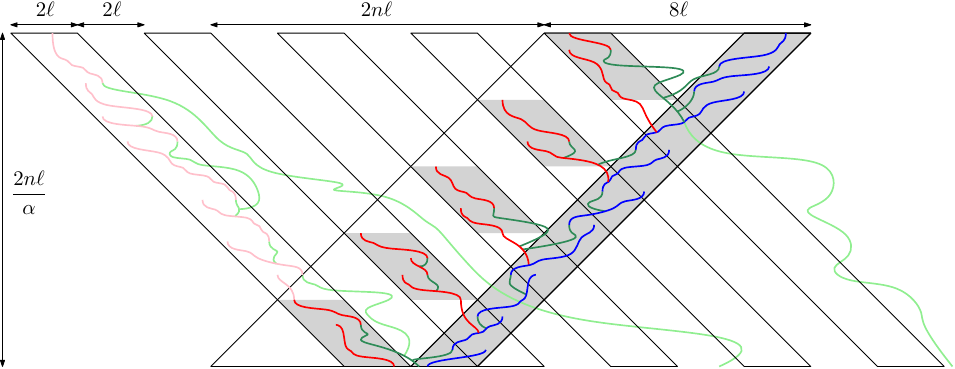}
\caption{%
Illustration for $ n=5 $ of how simultaneous occurrence of $ n+1 $ similar events results in a complete open branch in $ D_{4\ell,\frac{n}{2},\alpha} $.
Green paths are $ \lambda_i $-open, red and blue paths are $ \lambda_e$-open.
The existence of paths in the gray areas ensures that $ \lambda_i $-open edges in between can be used. The green paths may be needed as they connect and lead to blue and red paths.
\\
\hspace*{\fill}
(high resolution, color online)
}
\label{fig:block2}
\end{figure}

\begin{proof}
Given $ \varepsilon>0 $ and $T>0$,
let
$ \sE'_{\varepsilon,T} $ denote the event that
\[
|\cR\eta^-_t - \alpha t | \leq \varepsilon T\, \text{ for all } t\in[0,T].
\]
By Lemma~\ref{lemma:speed}, almost surely
$ |\cR\eta^-_t - \alpha t| \leq \varepsilon t $
for large enough $ t $.
Hence,
$ \Pb(\sE'_{\varepsilon,T}) \to 1 $ as $ T \to \infty $ for every $\varepsilon>0$.

Note the following about $ \sE'_{\varepsilon,T} $.
If this event occurs, then, for each $ t \in [0,T] $, there exists a $ \lambda_i $-open path $ \gamma $ from $ \Z^{-} \times \{0\} $ to $ (\cR\eta_t^-,t) $ such that $ \gamma_s \leq \cR \eta_s^- $ for all $ s \in [0,T] $ and each jump of $ \gamma $ to the right that coincides with a jump of $ \cR \eta_s^- $ to the right happens at a $ \lambda_e $-open edge.

For convenience,
let 
$ \sE'_{\varepsilon,T,x} $
denote the event that $ |\cR\eta^{(-\infty,x]}_t - \alpha t - x | \leq \varepsilon T\, \text{ for all } t\in[0,T] $.
In words, $ \sE'_{\varepsilon,T,x} $ is the same event as $ \sE'_{\varepsilon,T} $ except that the condition is translated by $ x $ units in space to the right.
Also, let
$ \sE''_{\varepsilon,T} $
denote the event that
$ |\cL \eta^+_t + \alpha t | \leq \varepsilon T $ for all $ t \in [0,T] $.
In words, $ \sE''_{\varepsilon,T} $ is the same event as $ \sE'_{\varepsilon,T} $ except that the condition is mirrored in space.
Finally, let $ \sE''_{\varepsilon,T,x} $ denote the translation of $ \sE''_{\varepsilon,T} $ by $ x $ units in space to the right.

Given positive integers $n$ and $\ell$, take $ \varepsilon = \frac{\alpha}{2n} $ and $T = \frac{2n\ell}{\alpha}$.
Note that the event $ \sE'_{\varepsilon,T} $ implies that the random points $ (\cR \eta^{-}_t, t) $ are in $ D_{\ell,2n,\alpha} $ for all $ t\in[0,T] $.
Combining several similar events together, we have
\[
\sE'_{\varepsilon,T,3\ell} \cap
\sE''_{\varepsilon,T,\ell} \cap
\sE''_{\varepsilon,T,5\ell} \cap
\sE''_{\varepsilon,T,9\ell} \cap \dots \cap
\sE''_{\varepsilon,T,(4n-3)\ell}
\subseteq
\sE_{4\ell,\frac{n}{2},\alpha,\lambda_i,\lambda_e}
\]
That is, simultaneous occurrence of $ n+1 $ events of the form $ \sE' $ and $ \sE'' $ imply existence of a complete open branch in $ D_{4\ell,\frac{n}{2},\alpha} $, as shown in Figure~\ref{fig:block2}.
In particular,
$ \Pb( \sE_{4\ell,\frac{n}{2},\alpha,\lambda_i,\lambda_e} ) \to 1 $ as $\ell \to \infty$ for each $n$ fixed.

Turning back to the statement of the lemma, we can take $n=2k$ and let $ L=4\ell\to\infty $,
so
we
get
$ \Pb( \sE_{L,k,\alpha,\lambda_i,\lambda_e} ) \to 1 $ as $L \to \infty$ in $4\N$,
which proves the lemma.
\end{proof}

\begin{proof}
[\proofname\ of Theorem~\ref{thm:critnospeed}]
\begin{figure}[b!]
\subfloat[]{\includegraphics[page=1,width=0.5\textwidth]{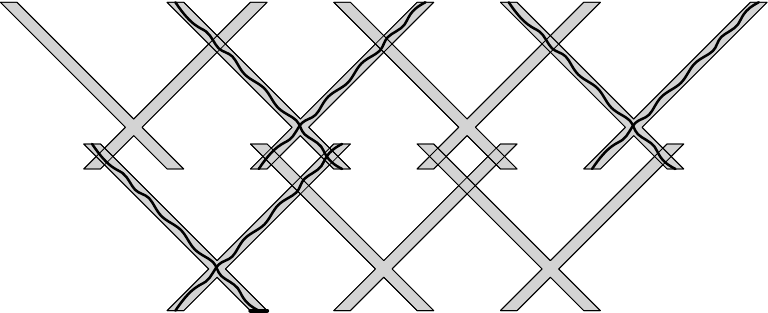}\label{fig:renormalizationa}}%
\subfloat[]{\includegraphics[page=2,width=0.5\textwidth]{renormalization}\label{fig:renormalizationb}}%
\caption{Good blocks and long paths.}
\label{fig:renormalizationz}
\end{figure}
We are assuming that $ \lambda_e \leq \lambda_i $ and $ \alpha=\alpha(\lambda_i,\lambda_e)>0 $.
Fix $ k=20 $ and, given $ L \in \N $, consider a grid made of copies of shear-shaped regions, as in Figure~\ref{fig:renormalizationa}.
Each shear consists of a translation of $ D_{L,k,\alpha} $ and a translation of $ D_{L,k,-\alpha} $.
(The choice of $ k=20 $ is not optimal, but it makes the picture more clear, especially considering that the figure was produced on a discrete space-time grid.)

We say that a shear is \emph{$ (\lambda_i,\lambda_e) $-good} if its blades contain complete $ (\lambda_i,\lambda_e) $-open branches.
Note that each shear only overlaps with its 6 nearest-neighboring shears (two below, two above, one on each side), and the goodness of shears in any collection not having nearest-neighbors is distributed as a product measure.
By~\cite{LiggettSchonmannStacey97}, there is $ \delta>0 $ such that, if $ \Pb(\sE_{L,k,\alpha,\lambda_i,\lambda_e})>1-\delta $, then the collection of $ (\lambda_i,\lambda_e) $-good shears stochastically dominates an i.i.d.\ configuration which is supercritical for oriented site percolation, illustrated in Figure~\ref{fig:renormalizationb}.
By Lemma~\ref{lemma:block}, we can fix $ L \in \N $ such that $ \Pb(\sE_{L,k,\alpha,\lambda_i,\lambda_e})>1-\delta $.
On the other hand, since $ D_{L,k,\alpha} $ is finite,
there exist $ \lambda_i'<\lambda_i $ and $ \lambda_e'<\lambda_e $ such that $ \lambda_e' \leq \lambda_i' $ and $ \Pb(\sE_{L,k,\alpha,\lambda_i',\lambda_e'})>1-\delta $ (note that we keep the same $ \alpha $).

So with positive probability there is an infinite oriented path of $ (\lambda_i',\lambda_e') $-good shears, such that the bottom of the first shear in this path is contained in $ A \times \{0\} $, where $ A := \{-6L,\dots,6L\} \subseteq \Z $.
We now argue that, when this event occurs, $ \eta^A_{0,t,\lambda_i',\lambda_e'}\ne\emptyset $ for all $ t\geq 0 $.
Indeed, suppose it occurs and let $ t>0 $.
Consider a shear whose time span includes $ t $ and which belongs to this infinite oriented path of $ (\lambda_i',\lambda_e') $-good shears.
By following this good path and applying Lemma~\ref{lemma:crossing} a finite number of times, we get a complete $ (\lambda_i',\lambda_e') $-open branch in a domain whose bottom is contained in $ A \times \{0\} $ and whose time span includes $ t $.
Hence, $ \eta^A_{0,t,\lambda_i',\lambda_e'} \ne\emptyset $ as claimed.

By Remark~\ref{rmk:allornothing}, $ \theta(\lambda_i',\lambda_e')>0 $.
Thus, $ \lambda_e > \lambda_*^e(\lambda_i) $
and
$ \lambda_i > \lambda_*^i(\lambda_e) $
whenever $ \lambda_e \leq \lambda_i $ and $ \alpha(\lambda_i,\lambda_e) >0 $.
Hence, for $ \lambda_e \leq \lambda_c $, since $ \lambda_*^i(\lambda_e) \geq \lambda_c \geq \lambda_e $, we have $\alpha(\lambda_*^i(\lambda_e),\lambda_e) \leq 0$.
Likewise, for $ \lambda_i \geq \lambda_c $, since $ \lambda_*^e(\lambda_i) \leq \lambda_c \leq \lambda_i $, we have $\alpha(\lambda_i,\lambda_*^e(\lambda_i)) \leq 0$.
On the other hand, by Lemmas~\ref{lemma:rightcontinuous} and~\ref{lemma:nonnegative} we have $\alpha(\lambda_*^i(\lambda_e),\lambda_e) \geq 0$ and $\alpha(\lambda_i,\lambda_*^e(\lambda_i)) \geq 0$, proving the theorem.
\end{proof}

\section{Speed increase with external infection rate}
\label{sec:speedincreasese}

In this section we prove Proposition~\ref{prop:speedincreasese}.
A small increase in the external infection rate can be seen as a forced infection of the site immediately to the right of the current configuration, with some small but positive frequency.
We want to show that this amounts to an increase in the growth speed.
To take advantage of this extra infected site, we compare the effect of infecting one site with the effect of shifting the configuration to the right (and the latter obviously amounts to a positive increase in the speed).
To achieve such a comparison (Proposition~\ref{prop:pathdomination}), we start from the invariant measure for the process seen from the rightmost infected site, provided by~Proposition~\ref{prop:existsinv}.


\emph{Notation.}
Throughout this section, letter $\eta$ means that parameters $(\lambda_i,\lambda_e)$ are being used,
letter $\xi$ means that parameters $(\lambda_i,\lambda_e+\varepsilon)$ are being used, via an extra Poisson clock of rate $\varepsilon$ to account for the increase in $\lambda_e$.
The values of $ \varepsilon>0 $ and $ (\lambda_i , \lambda_e) $ such that $\theta(\lambda_i,\lambda_e) > 0$ and $ \lambda_e + \varepsilon \leq \lambda_i $ are arbitrary but fixed.

\begin{proof}
[\proofname\ of Proposition~\ref{prop:speedincreasese}]
We will consider a sequence of coupled processes
 $(\eta^{n}_t,\xi^{n}_t)_{t \geq 0}$ indexed by $ n \in \N $.
Denote by $0<\tau_1<\tau_2<\dots$ the times of the external Poisson clock.

Start by defining the pair $ (\eta_t,\xi_t)_{t \geq 0} $ starting from $ \eta_0 = \xi_0 $ sampled from $ \mu $, which is supported on $ \Sigma^\odot $.
For all $t\in [0,\tau_1 )$ we have $\xi_t=\eta_t$ and at time $\tau_1$ we have
$$
\xi_{\tau_1}
=
\Xi \eta_{\tau_1}
=
\eta_{\tau_1} \cup \{\mathcal R \eta_{\tau_1}+1\}
,
$$
where $\Xi$ was defined in Section~\ref{sec:edge}.
Since the process $(\eta_t)_{t\geq 0}$ is independent of $\tau_1$, the distribution of $\Psi \eta_{\tau_1}$ is $\mu$.
Thus, by Proposition~\ref{prop:pathdomination}, $\Psi \xi_{\tau_1} \succcurlyeq \mu$.
Hence, enlarging the underlying probability space if necessary, there exists a random configuration $ \zeta^1 $
such that $ \Psi \zeta^1 $ is $\mu$-distributed, $ \cR \zeta^1 = \cR \xi_{\tau_1} $ and $ \zeta^1 \subseteq \xi_{\tau_1}$.
Moreover, $ \zeta^1 $ is independent of $ \tau_1 $ and of the graphical construction $ \omega $ after time $ \tau_1 $.

We now consider a coupled process $(\eta^1_{t},\xi^1_{t})_{t \geq \tau_1}$ which evolves using the same graphical construction and whose initial condition is $\eta^1_{\tau_1}=\xi^1_{\tau_1}=\zeta^1$.
By Lemma~\ref{lemma:attractive},
$
\eta^1_{\tau_1+t} \subseteq \xi^1_{\tau_1+t} \subseteq \xi_{\tau_1+t}
$
for all $ t \geq 0 $.
Moreover, since the process $(\eta^1_{\tau_1 +t} )_{t\geq 0}$ is equal in distribution to the process $(T \eta_{\tau_1 +t})_{t\geq 0}$ we have
$
\E [ \cR\eta^1_{\tau_1+t} ]
=
\E [ \cR\eta_{\tau_1+t} ]
+ 1.
$
Proceeding recursively, for each $ n $ we obtain processes $(\eta^n_{t},\xi^n_{t})_{t\geq \tau_n}$ such that
$
\eta^{n}_{\tau_n+t} \subseteq \xi^{n}_{\tau_n+t} \subseteq \xi_{\tau_n+t}
$
and
$
\E [ \cR\eta^n_{\tau_n+t} ]
=
\E [ \cR\eta_{\tau_n+t} ]
+ n
$
for all $ t \geq 0 $.
In particular, for all $ n \in \N $,
\[
\E[\cR \xi_{\tau_n}] \geq \E[\cR \eta_{\tau_n}] + n
.
\]

We now show from this inequality that
\[
\alpha(\lambda_i,\lambda_e+\varepsilon) \geq \alpha(\lambda_i,\lambda_e) + \varepsilon
.
\]

Since $ \E[\cR \eta_{t}] = \alpha(\lambda_i,\lambda_e) \cdot t $ by Proposition~\ref{prop:speedconstant}, and $ (\eta_t)_t $ is independent of $ (\tau_k)_k $, we have
\[
\E[\cR \eta_{\tau_n}] = \frac{\alpha(\lambda_i,\lambda_e)}{\varepsilon} \, n
,
\]
as $\tau_n$ is the $n$-th arrival of Poisson clock of rate $\varepsilon$.

This and the previous inequality imply that, for all $ n \in \N $,
\begin{equation*}\label{f}
\E\Big[\frac{\cR \xi_{\tau_n}}{n}\Big]
\geq
\frac{\alpha(\lambda_i,\lambda_e) + \varepsilon}{\varepsilon}
.
\end{equation*}
On the other hand, by Lemma~\ref{lemma:gspeed},
$ \frac{\cR \xi_{\tau_n}}{\tau_n} \to \alpha(\lambda_i,\lambda_e+\varepsilon) $
a.s.,
and therefore
\[
\frac{\cR \xi_{\tau_n}}{n}
\to
\frac{\alpha(\lambda_i,\lambda_e+\varepsilon)}{\varepsilon}
\quad
\text{ a.s.}
\]
To finish the proof, we want to combine the above equations, and for this we need to relate the a.s.\ limit of $ \frac{\cR \xi_{\tau_n}}{n} $ to its expectation.

Let $(Z_t)_{t\geq 0}$ be the process on $\Z$ that starts at the origin and jumps one unit to the right whenever the process $(\cR \xi_t)_{t\geq 0}$
jumps to the right.
Then $\cR \xi_t\leq Z_t$ a.s.\ for all $t\geq 0$.
Also, the sequence $\frac{Z_{\tau_n}}{n}$ converges a.s.\ and in mean to a constant $v$.
Applying Fatou's lemma to $\frac {Z_{\tau_n}-\cR \xi_{\tau_n}}{n}$, we get
\[
v-
\E \Big[\limsup_n \frac{\cR \xi_{\tau_n}}{n}\Big]
\leq
v-
\limsup_n \E \Big[\frac{\cR \xi_{\tau_n}}{n}\Big]
\]
Hence,
\[
\frac{\alpha(\lambda_i,\lambda_e) + \varepsilon}{\varepsilon}
\leq
\limsup_n \E \Big[\frac{\cR \xi_{\tau_n}}{n}\Big]
\leq
\E \Big[\limsup_n \frac{\cR \xi_{\tau_n}}{n}\Big]
=
\frac{\alpha(\lambda_i,\lambda_e+\varepsilon)}{\varepsilon}
,
\]
proving the proposition.
\end{proof}

\section{Speed increase with internal infection rates}
\label{sec:intinc}

In this section we prove Proposition~\ref{prop:speedincint}.
We want to show that a small increase in the internal infection rate amounts to an increase in the speed.
The argument here is rather indirect if compared to the previous section.
It considers an auxiliary process that has infection rate $ \lambda_e $ only at the rightmost site but still $ \lambda_i $ at the leftmost one, shows that this process survives with probability $ \theta_*>0 $, and then shows that adding an infected site to a given configuration at a certain time increases the expected position of the rightmost infected site by at least $ \theta_* $, at any future time.
This effect is cumulative, and from there we conclude that a process with larger $ \lambda_i $ has larger speed, on average.
Since speed can be defined as expected speed (Lemma~\ref{lemma:speed}), this is enough to derive Theorem~\ref{thm:nohorizontals}.


\emph{Notation.}
Throughout this section, letter $\eta$ means that parameters $(\lambda_i,\lambda_e)$ are being used, and $\xi$ means that parameters $(\lambda_i',\lambda_e)$ are being used.
An auxiliary process, denoted with $ \zeta $, has infections to the right of the rightmost infected site occurring at rate $ \lambda_e $ and other infections occurring at rate $ \lambda_i $.
The values of $ \lambda_i > \lambda_c $ and $ \lambda_i' > \lambda_i $ are fixed, and $ \lambda_e = \lambda_*^e(\lambda_i) \leq \lambda_c $.

Let
\[
\theta_* := \Pb(\zeta^0_t \neq \emptyset, \forall t\geq 0).
\]
\begin{lemma}
\label{lemma:survivalint}
We have
$ \theta_* > 0. $
\end{lemma}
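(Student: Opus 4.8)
\emph{Plan.} I would prove $\theta_*>0$ by the same block/renormalization scheme used in Section~\ref{sec:critnospeed}, but with \emph{asymmetric, left‑leaning} building blocks. The point is that $\zeta$ only handicaps the rightmost site, so $\zeta$ spreads to the left like a supercritical contact process and to the right like the process $\eta$ on the critical curve. Concretely, two observations set up the argument. First, since $(-\infty,x]$ has no leftmost point, the $\lambda_e$‑rule of $\zeta$ never acts on the left end, and therefore $\zeta^{(-\infty,x]}_t$ coincides with the process $\eta^{(-\infty,x]}_t$ with parameters $(\lambda_i,\lambda_e)=(\lambda_i,\lambda_*^e(\lambda_i))$; by Theorem~\ref{thm:critnospeed} and Lemma~\ref{lemma:speed} this gives $t^{-1}\cR\zeta^{(-\infty,x]}_t\to 0$ a.s. Symmetrically, $[x,\infty)$ has no rightmost point, so $\zeta^{[x,\infty)}_t$ coincides with the standard contact process with parameter $\lambda_i>\lambda_c$ started from $[x,\infty)$, whence $t^{-1}\cL\zeta^{[x,\infty)}_t\to -\alpha(\lambda_i,\lambda_i)$ a.s., where $\alpha(\lambda_i,\lambda_i)>0$ is the (classically) strictly positive edge speed of the supercritical contact process (see \cite[Chapter~VI]{Liggett05}). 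I also note that $\zeta$ is attractive on $\{\lambda_e\le\lambda_i\}$ exactly as in Lemma~\ref{lemma:attractive}, and that the zero/positive dichotomy of Remark~\ref{rmk:allornothing} holds for $\zeta$.

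Next I would introduce a notion of \emph{$\zeta$-open branch}, defined as in Section~\ref{sec:critnospeed} except that condition~(iv) (a $\lambda_e$‑mark is required) is imposed only on the jumps of the \emph{rightmost} path of the branch that move $\cR\cO_t$ to the right; no extra condition is needed on the leftmost path, because $\zeta$ performs leftward infections at rate $\lambda_i$ everywhere. The key inductive fact carries over: if a domain $D$ with bottom contained in $A\times\{0\}$ contains a complete $\zeta$-open branch, then $\cO_t\subseteq\zeta^A_t$ for all $t$ in the time span of $D$, hence $\zeta^A_t\neq\emptyset$ there. Indeed, along the induction $\cR\cO_t\le\cR\zeta^A_t$, so each rightward advance of $\cO$ occurs either at an internal site of $\zeta^A$ (where a $\lambda_i$‑open edge suffices) or at the rightmost site of $\zeta^A$ (where the $\lambda_e$‑open edge provided by condition~(iv) suffices). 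The Grafting Lemma~\ref{lemma:crossing} and the remark preceding Lemma~\ref{lemma:block} (existence of a unit‑jump $\lambda_i$‑open path tracking the right edge, with right‑edge jumps $\lambda_e$‑open) go through verbatim. For the building block I would use a left‑leaning scissor: a translate of a domain bounded on the right by the vertical line through its right corner and on the left by a line of slope $-\alpha(\lambda_i,\lambda_i)$; equivalently, after the shear $(x,t)\mapsto(x+\tfrac12\alpha(\lambda_i,\lambda_i)\,t,\,t)$ the two blades acquire the symmetric slopes $\pm\tfrac12\alpha(\lambda_i,\lambda_i)$, putting one back in the geometry of Figures~\ref{fig:crossinglemma} and~\ref{fig:block2}.

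The block estimate is then obtained exactly as in Lemma~\ref{lemma:block}: weaving together $O(1)$ translated copies of the events ``$|\cR\zeta^{(-\infty,x]}_t|\le\varepsilon T$ for all $t\in[0,T]$'' (which track the right blade, of slope $0$) and ``$|\cL\zeta^{[x,\infty)}_t+\alpha(\lambda_i,\lambda_i)\,t|\le\varepsilon T$ for all $t\in[0,T]$'' (which track the left blade), as in Figure~\ref{fig:block2}, produces a complete $\zeta$-open branch in a left‑leaning scissor of spatial width $L$; by the two convergences recorded above each such event has probability tending to $1$ as $L\to\infty$ (with $T$ linear in $L$), so the scissor carries a complete $\zeta$-open branch with probability tending to $1$. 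Since existence of a complete $\zeta$-open branch in a scissor depends only on the graphical construction inside that scissor, and each scissor overlaps only its few neighbours, the comparison of~\cite{LiggettSchonmannStacey97} applies: for $L$ large the good scissors dominate a supercritical oriented site percolation, so with positive probability there is an infinite oriented path of good scissors emanating from the scissor at the origin. Grafting along this path shows $\zeta^{A}_t\neq\emptyset$ for all $t$, where $A$ is the finite bottom of the first scissor; hence $\Pb(\zeta^{A}_t\neq\emptyset\ \forall t)>0$, and the Remark~\ref{rmk:allornothing} dichotomy yields $\theta_*=\Pb(\zeta^0_t\neq\emptyset\ \forall t)>0$.

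\emph{Main obstacle.} The delicate step is the same non‑locality that complicates Section~\ref{sec:critnospeed}: whether the infection propagates through a path depends on the global position of the rightmost infected site, which is not contained in any single block. This is exactly what the $\zeta$-open branch bookkeeping is designed to absorb — imposing condition~(iv) on the branch's own right boundary, together with the inductive inclusion $\cO_t\subseteq\zeta^A_t$ and $\cR\cO_t\le\cR\zeta^A_t$, makes the ``local'' open‑branch event imply the ``global'' statement $\zeta^A_t\neq\emptyset$. A secondary, purely geometric point (absent in Section~\ref{sec:critnospeed}, where the relevant speeds are symmetric) is that here the speeds are $0$ on the right and $-\alpha(\lambda_i,\lambda_i)<0$ on the left, forcing the sheared, left‑leaning blocks; once the shear is performed, all probabilistic inputs come from Lemma~\ref{lemma:speed} and from the classical positivity of the supercritical contact‑process edge speed.
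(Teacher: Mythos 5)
Your plan is built on exactly the same two probabilistic inputs as the paper's proof---since a configuration unbounded to the left has no leftmost point, $\cR\zeta^{-}_t/t\to 0$ a.s.\ by Theorem~\ref{thm:critnospeed} and Lemma~\ref{lemma:speed}, and since $\Z_+$ has no rightmost point, $\cL\zeta^{+}_t/t\to-\alpha(\lambda_i,\lambda_i)<0$ a.s.---but from there the routes diverge completely. The paper does not renormalize at all: it fixes $0<\delta<\delta'<\alpha(\lambda_i,\lambda_i)$ and $n$ so large that the events $\{\cR\eta^-_t\geq -n-\delta t \ \forall t\}$ and the translate by $-3n$ of $\{\cL\zeta^+_t\leq n-\delta' t\ \forall t\}$ each have probability above $\tfrac12$; on their (positive-probability) intersection the right and left edges of the finite process $\zeta^{\{-3n,\dots,0\}}$ are kept apart for all times, so it survives, and Remark~\ref{rmk:allornothing} then gives $\theta_*>0$ in a few lines. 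You instead rebuild the Section~\ref{sec:critnospeed} machinery with a right-edge-only open-branch condition and left-leaning blocks. Your reductions are correct (the identification of $\zeta$ from half-lines with $\eta$ on the critical curve and with the standard supercritical process, and the relaxed condition~(iv) bookkeeping that absorbs the non-locality), and I see no fatal obstruction; what your route buys is a finite-size criterion, hence robustness of survival under small perturbations of the parameters---which this lemma does not need---at the price of substantial geometric work that you assert rather than carry out: the asymmetric analogue of Lemma~\ref{lemma:block} (weaving events of slopes $0$ and $-\alpha(\lambda_i,\lambda_i)$ instead of $\pm\alpha$), a left-to-right companion of the Grafting Lemma~\ref{lemma:crossing}, and the sheared scissor lattice fed into the comparison with oriented percolation via~\cite{LiggettSchonmannStacey97}. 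If you want the short proof, note that the two a.s.\ limits you already recorded suffice, via the direct edge-crossing argument above, without any block construction.
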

\begin{proof}
By Lemma~\ref{lemma:speed} and Theorem~\ref{thm:critnospeed}, $ \frac{\cR \eta^-_t}{t} \to 0 $ a.s.
Hence, for any $\delta >0$
\[
\lim_n \Pb( \cR \eta^-_t \geq -n -\delta t \text{ for all } t \geq 0)=1.
\]
Similarly, $ \frac{\cL \zeta_t^ +}{t} \to -\alpha(\lambda_i,\lambda_i) $, so, for every $ \delta' < \alpha(\lambda_i,\lambda_i) $,
\[
\lim_n \Pb( \cL \zeta^+_t \leq n -\delta' t \text{ for all } t \geq 0)=1
.
\]
Now take some $ \delta $ and $ \delta' $ such that $ 0 < \delta < \delta' < \alpha(\lambda_i,\lambda_i) $, and take $ n $ such that the above probabilities are larger than $ \frac{1}{2} $.
Translating the latter event horizontally by $ -3n $, we get the event $ \{\cL \zeta^{\{-3n,\dots,0\}\cup \Z_+} \leq -2n - \delta' t \text{ for all } t \geq 0\} $.
Simultaneous occurrence of this event and $ \{\cR \eta^-_t \geq -n -\delta t \text{ for all } t \geq 0\} $ implies $ \{\zeta^{\{-3n,\dots,0\}}_t \ne \emptyset \text{ for all } t \geq 0 \} $.
By Remark~\ref{rmk:allornothing}, $ \theta_*>0 $.
\end{proof}

\begin{lemma}\label{expinc}
Let $ A \in \Sigma^\ominus $ and let $A'= A\cup \{x\}$ for some $ x > \cR A $.
Then,
\[
\E[ \cR\eta^{A'}_t ]
\geq
\E[ \cR\eta^A_t ] + \theta_*
\]
for all $ t \geq 0 $.
\end{lemma}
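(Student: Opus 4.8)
The plan is to couple the process $\eta^{A'}$ (started from $A'=A\cup\{x\}$) with $\eta^{A}$ (started from $A$) using the same graphical construction, and to extract from the extra site $x$ an independent copy of the one-sided process $\zeta$ whose survival probability is $\theta_*$. The heuristic is: on the event that the infection emanating from the single site $x$ survives forever, its rightmost point eventually dominates the rightmost point of $\eta^A$, contributing at least $+1$ to $\cR\eta^{A'}_t$ compared with $\cR\eta^A_t$; averaging, this gives at least $\theta_*$. But one has to be careful because the two clusters interact, so I will not literally run $\eta^x$ independently; instead I will run a $\zeta$-type process whose own right edge is the right edge of $\eta^{A'}$, coupled so that it stays inside $\eta^{A'}$ and so that its law (including its survival event) is that of $\zeta^x$ with parameters $(\lambda_i,\lambda_e)$.

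Concretely, first I would note that by Lemma~\ref{lemma:attractive}, since $\lambda_e\le\lambda_i$ and $A\subseteq A'$, we can couple so that $\eta^A_t\subseteq\eta^{A'}_t$ for all $t$, and in particular $\cR\eta^A_t\le\cR\eta^{A'}_t$. Next, define a process $\zeta'$ inside $\eta^{A'}$ started from $\{x\}$: it uses rate $\lambda_e$ for infections pushing \emph{its own} right edge rightward and rate $\lambda_i$ for every other infection, following the same Poisson marks as the graphical construction for $\eta^{A'}$. Because $\lambda_e\le\lambda_i$ and $x=\cR A'$, one checks that the right edge of $\eta^{A'}$ pushes rightward exactly when $\lambda_e$-open edges fire at the current rightmost site, which is also how $\zeta'$ pushes its right edge, so one can arrange the coupling so that $\cR\zeta'_t=\cR\eta^{A'}_t$ whenever $\zeta'_t\neq\emptyset$; and one checks $\zeta'_t\subseteq\eta^{A'}_t$ always, since $\zeta'$ is obtained by erasing some initially-infected sites and reveals no extra infections. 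The marginal law of $\zeta'$ is that of $\zeta^x$ (a translate of $\zeta^0$), whose survival event has probability $\theta_*$ by Lemma~\ref{lemma:survivalint} and Remark~\ref{rmk:allornothing}.

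Now on the survival event $S=\{\zeta'_s\neq\emptyset\ \forall s\}$, which has probability $\theta_*$, we have $\cR\eta^{A'}_t=\cR\zeta'_t$. It remains to argue $\cR\zeta'_t\ge \cR\eta^A_t+1$ on $S$ — equivalently $\cR\eta^{A'}_t\ge\cR\eta^A_t+1$. This is the step I expect to be the main obstacle, and I would handle it by a path-shifting / domination argument in the spirit of Lemma~\ref{lemma:pathdomination}: run in parallel the process $\eta^{TA}$ started from the shift $TA=A+1$; since $x>\cR A$, i.e.\ $x\ge\cR A+1=\cR(TA)$, the configuration $A'=A\cup\{x\}$ satisfies $A'\supseteq \{x\}$ and $A'\cup TA \supseteq TA$, and by monotonicity and a coupling argument one gets $\cR\eta^{A'}_t\ge\cR\eta^{TA}_t$ on the event that the $x$-cluster survives, because that cluster's right edge dominates — after shifting — the right edge driven by $TA$. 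Combined with $\E[\cR\eta^{TA}_t]=\E[\cR\eta^{A}_t]+1$ (translation invariance of the dynamics), and with $\cR\eta^{A'}_t\ge\cR\eta^A_t$ always, we obtain
\[
\E[\cR\eta^{A'}_t]\ \ge\ \E\big[\1_{S}\,\cR\eta^{TA}_t\big]+\E\big[\1_{S^c}\,\cR\eta^{A}_t\big],
\]
and since $\cR\eta^{TA}_t=\cR\eta^A_t+1$ in distribution under the coupling and $S$ is (arranged to be) independent of $(\eta^A_t)$ — using that $S$ depends only on the marks near the descendants of $x$, which can be taken independent of $A$'s cluster by the same rightmost-path decomposition as in Lemma~\ref{lemma:pathdomination} — this gives $\E[\cR\eta^{A'}_t]\ge\E[\cR\eta^A_t]+\Pb(S)=\E[\cR\eta^A_t]+\theta_*$, as desired. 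The delicate point, as in the proof of Lemma~\ref{lemma:pathdomination}, is setting up the conditioning on the rightmost active path so that the survival event of the added-site cluster is genuinely independent of the $A$-cluster and so that the inclusion $\zeta'_t\subseteq\eta^{A'}_t$ and the edge-matching $\cR\zeta'_t=\cR\eta^{A'}_t$ hold simultaneously; I would make this rigorous by conditioning on $\Gamma$, the rightmost $(\lambda_i,\lambda_e)$-active path from $A$, exactly as in Section~\ref{sec:speedincreasese}.
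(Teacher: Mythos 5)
Your first half is on target and matches the paper's idea: the descendants of $(x,0)$ inside $\eta^{A'}$ form a cluster which, while alive, contains the rightmost site of $\eta^{A'}$ (a nearest-neighbour crossing argument), hence evolves exactly as the one-sided process $\zeta^{x}$ and survives with probability $\theta_*$ by Lemma~\ref{lemma:survivalint}. The gap is in the step you yourself flag as the main obstacle: deducing $\cR\eta^{A'}_t\geq\cR\eta^A_t+1$ on the survival event $S$. Under your coupling (same graphical construction for $\eta^A\subseteq\eta^{A'}$) this inequality is simply not true in general: the two right edges are pushed by $\lambda_e$-marks sitting on \emph{different} edges, so $\cR\eta^{A'}_t$ can retreat by recoveries down to $\cR\eta^A_t$ (which is also a site of $\eta^{A'}_t$) while the $x$-cluster is still alive, and at such times the gap is $0$. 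The proposed rescue via $\eta^{TA}$ does not work either: $A'$ and $TA=A+1$ are not comparable as sets, so attractiveness gives nothing; Lemma~\ref{lemma:pathdomination} is a statement about the invariant measure $\mu$ (it compares $\Xi\zeta$ with $T\zeta$ in distribution for $\mu$-distributed $\zeta$, using the conditional structure of $\eta^-_t$ given the rightmost active path) and does not yield any domination between $\Xi A$ and $TA$ for a fixed $A\in\Sigma^\ominus$ — indeed, seen from their right edges these two configurations are in general incomparable. Finally, the independence of $S$ from $\eta^A_t$ that you want to ``arrange'' fails when everything is driven by the same marks, since the $x$-cluster and the $A$-cluster use overlapping randomness.

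The paper closes exactly this gap by a different coupling, a transplant rather than a common graphical construction: one first explores the active paths of $\eta^{A'}$ emanating from $(x,0)$, with extinction time $\tau$, and then \emph{reuses this very cluster}, translated by $y-x$ with $y=\cR A$, as the set of descendants of $(y,0)$ for the process started from $A$; the paths started from the remaining sites are run with the same graphical construction until they are absorbed by the respective cluster. Because the same (translated) cluster carries the right edge of both processes, one gets the pointwise identity $\cR\eta^{A'}_t=\cR\eta^A_t+(x-y)$ for $t<\tau$, and $\eta^{A'}_t\supseteq\eta^A_t$ for $t\geq\tau$, hence $\cR\eta^{A'}_t-\cR\eta^A_t\geq\1_{\{\tau>t\}}$ pointwise; taking expectations and using $\Pb(\tau>t)\geq\Pb(\tau=\infty)=\theta_*$ gives the lemma, with no independence or domination-by-shift argument needed. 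Without this transplant (or an equivalent device guaranteeing a persistent unit gap on $S$), your final display is unjustified.
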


\begin{proof}
We will construct a coupling of two contact processes $\eta^A$ and $\eta^{A'}$ starting from $A$ and $A'$, respectively.
Let $y=\cR A$.

For the process $\eta^{A'}$, we first explore the active paths for configuration $A'$ starting at $(x,0)$.
We then translate this collection of active paths by $(y-x,0)$ and use them for the process $\eta^{A}$.
Call $\tau \in (0,+\infty]$ the global extinction time of these active paths.

Then construct the paths starting to the left of $x$ with the same graphical construction up to the time they reach an already constructed active path, i.e.,\ an active path starting at $(x,0)$ for $\eta^{A'}$ and at $(y,0)$ for $\eta^{A}$.

We now observe that, for $t<\tau$, we have $\cR \eta^{A'}_t=\cR \eta^{A}_t +(x-y)$ and, for $t \geq \tau$ we have $\eta^{A'}_t \supseteq \eta^{A}_t$ because
for any active path for $A'$ which did not meet the descendants of $x$
there is a corresponding active path for $A$ which did not meet the descendants of $y$.
Therefore,
\[
\cR\eta^{A'}_t - \cR\eta^{A}_t
\geq
\1_{\{\tau > t\}}
.
\]
This proves the desired inequality and thus the lemma.
\end{proof}

\begin{proof}
[\proofname\ of Proposition~\ref{prop:speedincint}]
First note that there exists $ \delta>0 $ such that, for all $ A \in \Sigma^\ominus $, at $ t=1 $ we have $\Pb(\cR \xi^A_1\geq \cR \eta^A_1+1)\geq \delta$ (proof omitted).

Now for each $ n \in \N $, consider the process $ (\xi^n_t)_{t \geq n} $ starting from $ \xi^n_n = \eta^-_n $.
Using Lemma~\ref{expinc}, one gets $ \E [ \cR \xi^-_t ] \geq \E [ \cR \xi^1_t ] + \delta\theta_* $ for all $t\geq 1$.

Likewise, $\Pb(\cR \xi^1_2 \geq \cR \eta_2^- + 1)\geq \delta$, hence for $ t \geq 2 $
\begin{equation}
\nonumber
\E[ \cR \xi^-_t ]
\geq
\E[ \cR \xi^1_t ] + \delta\theta_*
\geq
\E[ \cR \xi^2_t ] + 2\delta\theta_*
\geq
\E[ \cR \eta^-_t ] + 2\delta\theta_*
.
\end{equation}
The induction is clear and gives
\begin{equation}
\nonumber
\E[ \cR \xi^-_t ]
\geq
\E[ \cR \eta^-_t ] + n\delta\theta_*
\end{equation}
for all $ n\in\N $ and $ t \geq n $.
Therefore,
\begin{equation}
\nonumber
\alpha(\lambda_i',\lambda_*^e(\lambda_i))
\geq
\alpha(\lambda_i,\lambda_*^e(\lambda_i))
+ \delta\theta_*
= \delta\theta_*
>0
\end{equation}
by
Theorem~\ref{thm:critnospeed}
and
Lemma~\ref{lemma:survivalint},
concluding the proof.
\end{proof}

\section{Domination in the non-attractive case}
\label{sec:nonattractive}

In this section we prove Theorem~\ref{thm:helpout}.

\emph{Notation.}
Throughout this section,
$ \lambda_i = \lambda_e = \lambda_c $.
Letter $\xi$ means that parameters $(\lambda_i,\lambda_e+\varepsilon)$ are being used, via an external Poisson clock to account for the $ \varepsilon $ increase.
The value of $ \varepsilon>0 $ is arbitrary but fixed.
As usual, letter $\eta$ means that parameters $(\lambda_i,\lambda_e)$ are being used.

Denote by $0=\tau_0<\tau_1<\tau_2<\dots$ the times of the external Poisson clock (which has intensity $ \varepsilon $).
We will construct a sequence of processes denoted
$(\xi^i_t)_{t \geq \tau_i}$, for $i \in \N_0$.
Their initial distribution will be the measure $\mu$ provided by Proposition~\ref{prop:existsinv} for $\lambda_i=\lambda_e=\lambda_c$, and the superscripts will be used to enumerate them.
The process $(\xi^0_t)_{t\geq 0}$ is given by $(\xi_t)_{t\geq 0}$ which starts from a random configuration $ \xi^0_0 \sim \mu $.

Suppose the process
$ (\xi^{i-1})_{t \geq \tau_{i-1}} $
has been constructed.
We will show how to construct
$ (\xi^{i})_{t \geq \tau_{i}} $.

We start by describing how we sample $ \xi^i_{\tau_i} $, the initial configuration at time $\tau_i$ for
$(\xi^i_t)_{t \geq \tau_i}$,
from
the configuration
$ \xi^{i-1}_{\tau_i} $.
This configuration $ \xi^i_{\tau_i} $ will satisfy $ \xi^i_{\tau_i} \subseteq \xi^{i-1}_{\tau_i} $, $\cR\xi^i_{\tau_i} = \cR\xi^{i-1}_{\tau_i} $ and $ \Psi\xi^i_{\tau_i} \sim \mu $.
We will describe it rather explicitly, because later on we will need to argue that a certain sequence is stationary, and this will be key to prove Theorem~\ref{thm:helpout}.
First note that, by Proposition~\ref{prop:existsinv}, $ \Psi \xi^{i-1}_{\tau_i-} \sim \mu$.
Also note that $ \xi^{i-1}_{\tau_i} = \Xi (\xi^{i-1}_{\tau_i-}) $.
By Proposition~\ref{prop:pathdomination}, $ \Psi \xi^{i-1}_{\tau_i} \succcurlyeq \mu $.
We will show how to use $ \xi^{i-1}_{\tau_i} $ and some extra randomness to sample the configuration $ \xi^i_{\tau_i} $ having the above properties.
Without loss of generality, we assume that $\cR \xi^{i-1}_{\tau_i}=0$, otherwise just use $\Psi \xi^{i-1}_{\tau_i}$ and translate the resulting $\xi^i_{\tau_i}$ accordingly.
Let $(V^i_k)_{k\in\N}$ be an infinite sequence of independent random variables uniformly distributed on $[0,1]$ which are also independent of all the Poisson processes used previously.
Also let $\nu$ be a distribution on $ \Sigma^\odot \times \Sigma^\odot$ such that its first marginal is $\mu$, its second marginal is the distribution of $\xi^{i-1}_{\tau_i}$ and $\nu(\{(\eta,\xi): \eta \subseteq \xi\})=1$.
This measure $\nu$ has a conditional distribution of $\eta$ given $\xi$, given by a regular conditional distribution.
We start by declaring that $0 \in \xi^i_{\tau_i}$.
Once $\xi^i_{\tau_i} \cap \{-k,\dots,0\}$ has been determined, we declare that $-k-1 \in \xi^i_{\tau_i}$ if
$V^i_{k+1} \leq \nu( -k-1 \in \eta \,|\, \xi=\xi^{i-1}_{\tau_i},\, \eta=\xi^i_{\tau_i} \text{ on } \{-k,\dots,0\} ) $,
and that $-k-1 \not\in \xi^i_{\tau_i}$ otherwise.
This determines $\xi^i_{\tau_i}$ with the claimed properties.

To describe the evolution of $(\xi^i_t)_{t\geq \tau_i}$, we start by looking at the active paths starting from the space-time point $(\cR \xi^{{i-1}}_{\tau_i},\tau_i)$ for the configuration $ \xi^{{i-1}}_{\tau_i} $.
These are the paths whose jumps are supported by a $ \lambda_c $-open edge, plus an extra jump to the right at the rightmost active path at times $\tau_{i+1},\tau_{i+2},\dots$.
For simplicity we assume this set of paths is finite in time (the opposite already implies Theorem~\ref{thm:helpout} anyway).
Let $A_{i,0}$ be the set of points in these paths, let $\tau_{i,0}=\tau_i$ and let
\[
\tau_{i,1}=\sup \{s: A_{i,0}\cap \{(x,s), x\in \Z\}\neq \emptyset \}
.
\]
Then proceed by induction on $j$: first let $A_{i,j-1}$ be the set of points in the open paths starting at $(\cR \xi^{i-1}_{\tau_{i,j-1}},\tau_{i,j-1})$
and after that let \newline $\tau_{i,j}=\sup \{s: A_{i,j-1}\cap \{(x,s), x\in \Z\}\neq \emptyset \}$.

The construction of the process $\xi^i$ is now done as follows:
\begin{enumerate}
\item
In the time interval $[\tau_{i,j},\tau_{i,j+1})$, the open paths starting at $(\cR \xi^i_{\tau_{i,j}},\tau_{i,j})$ are given by $T^{\cR \xi^i_{\tau_{i,j}} - \cR \xi^{i-1}_{\tau_{i,j}}} A_{i,j}$.
That is, we translate the open paths starting
from the rightmost point of $\xi^{i-1}_{\tau_{i,j}}$ to the rightmost point of $\xi^i_{\tau_{i,j}}$.
\item
The open paths in the same time interval starting at the other points of $\xi^i_{\tau_{i,j}}$ are given by the Poisson processes of parameters $\lambda_i$ and $1$ until they merge with the open paths starting from the rightmost point
of $\xi^i_{\tau_{i,j}}$ (if this occurs). 
\end{enumerate}

The point of this perhaps intricate construction is that it simultaneously satisfies two properties.
First, the sequence $ (\cR \xi^n_{\tau_n} - \cR \xi^{n-1}_{\tau_{n-1}})_{n\in\N} $ is stationary.
Second, stated below, it allows a comparison between $\xi^{i+1}$ and $\xi^i$.

\begin{lemma}
\label{forbidden zones}
With the above construction of the processes $\xi^i$, we have
$\cR \xi^{i+1}_t \leq \cR \xi^i_t $ for all $ i \in \N_0 $ and all $t\geq \tau_{i+1}$.
\end{lemma}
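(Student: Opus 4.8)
The plan is to prove Lemma~\ref{forbidden zones} by induction on $i$, comparing the process $\xi^{i+1}$ (which has one extra site added at time $\tau_{i+1}$ relative to $\xi^i$, up to a thinning) against $\xi^i$, and tracking only the position of the rightmost occupied site. The key structural fact to exploit is that, by construction, both processes drive their rightmost-site dynamics via translates of the \emph{same} families of active paths $A_{i,j}$ (resp.\ $A_{i+1,j}$) on the successive time intervals $[\tau_{i,j},\tau_{i,j+1})$, and the extra $\varepsilon$-jumps occur at the common times $\tau_{i+2},\tau_{i+3},\dots$. So the comparison should reduce to an interval-by-interval argument where on each interval we compare two copies of the same rightmost-path process started from possibly different positions, and then control what happens at the junction times.

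First I would set up the base case $i=0$ at time $\tau_1$: here $\xi^1_{\tau_1}\subseteq\xi^0_{\tau_1}$ with $\cR\xi^1_{\tau_1}=\cR\xi^0_{\tau_1}$ by the sampling construction, and I would argue that from this configuration onward the rightmost site of $\xi^1$ is governed by translates of the active-path families starting at $(\cR\xi^0_{\tau_{1,j}},\tau_{1,j})$, so on $[\tau_{1,0},\tau_{1,1})$ the two rightmost positions literally coincide (the translation amount is $0$), and at $\tau_{1,1}$ one restarts. The subtlety is that $\xi^1$ gets an extra right-jump at $\tau_2,\tau_3,\dots$ exactly as $\xi^0$ does, so these do not create a discrepancy; the only possible discrepancy is that $\xi^1$ may have \emph{fewer} sites than $\xi^0$ and so its secondary (non-rightmost) paths, which feed into the rightmost path when they merge with it, are a sub-collection — and losing feeder paths can only make the rightmost site move \emph{less}, hence $\cR\xi^1_t\le\cR\xi^0_t$. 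For the inductive step $i\to i+1$ the picture is the same but shifted: at time $\tau_{i+1}$ we have $\xi^{i+1}_{\tau_{i+1}}$ obtained by thinning $\Xi(\xi^{i+1}_{\tau_{i+1}-})$, wait — more precisely the relevant comparison is between $\xi^{i+1}$ and $\xi^i$, and at $\tau_{i+1}$ one uses $\xi^{i+1}_{\tau_{i+1}}\subseteq\xi^i_{\tau_{i+1}}$ with equal rightmost sites (this is exactly the property recorded when $\xi^{i+1}_{\tau_{i+1}}$ was sampled from $\xi^i_{\tau_{i+1}}$, using that the coupling measure $\nu$ is supported on ordered pairs), combined with the inductive hypothesis that $\cR\xi^i_t\le\cR\xi^{i-1}_t$ for $t\ge\tau_i$, which is not directly needed for this lemma's statement but keeps the inductive bookkeeping consistent.

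The core quantitative step I would isolate as a sub-claim: if two rightmost-path processes are driven by the same active-path families $A_{\cdot,j}$ translated so as to start at the current rightmost position, are fed by nested collections of secondary paths, and receive $\varepsilon$-jumps at the same times, and if at some junction time $\tau_{i+1,j}$ (a common refinement of the two processes' junction partitions) we have $\cR\xi^{i+1}\le\cR\xi^i$, then this inequality persists to the next junction time. On an interval with no extinction of the driving family, $\cR\xi^{i+1}_t-\cR\xi^{i+1}_{\tau}$ and $\cR\xi^{i}_t-\cR\xi^{i}_{\tau}$ are \emph{the same} increment (same translated paths), so the inequality is preserved trivially; the only place it can change is when the driving family of $\xi^{i+1}$ dies out while that of $\xi^i$ has not, at which point $\xi^{i+1}$ ``restarts'' from one of its surviving secondary paths — and since $\xi^{i+1}$'s secondary paths are a sub-collection of $\xi^i$'s (by the nestedness $\xi^{i+1}_{\tau_{i+1}}\subseteq\xi^i_{\tau_{i+1}}$ propagated through the identical graphical construction for the secondary paths), the restarted rightmost site of $\xi^{i+1}$ is at a position reachable by $\xi^i$ too, hence still $\le\cR\xi^i_t$. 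One has to be a little careful that the two processes' junction partitions $\{\tau_{i,j}\}_j$ and $\{\tau_{i+1,j}\}_j$ need not agree, so I would work with their common refinement and check that on each refined interval exactly one of the two families is the ``active driver'' whose death triggers a restart, handling the bookkeeping of which restart happens first.

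The main obstacle I anticipate is exactly this last point: making rigorous the claim that ``losing feeder paths can only slow the rightmost site down.'' In the non-attractive regime one cannot say $\xi^{i+1}_t\subseteq\xi^i_t$ — indeed the whole section is predicated on that failing — so one must phrase everything purely in terms of the rightmost coordinate and the explicit path families, never in terms of set inclusion of the configurations at generic times. The right framing is probably: define, for each process, the rightmost-site trajectory as a deterministic functional of (a) the initial rightmost position, (b) the sequence of driving families $A_{\cdot,j}$, (c) the collection of secondary paths and the graphical data governing when they merge upward, and (d) the $\varepsilon$-clock; then show this functional is monotone in a suitable sense under passing to a sub-collection in (c) while keeping (b) and (d) fixed and (a) equal. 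Verifying monotonicity of this functional is where the real work sits, and I would expect the argument to hinge on the observation that a secondary path can only ever \emph{raise} $\cR$ (when it overtakes and merges), never lower it, so deleting secondary paths is monotone; the induction on $i$ and on $j$ then just threads these facts together.
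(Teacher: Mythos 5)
Your overall strategy is the same as the paper's: induct over the junction intervals, observe that on each interval the rightmost point of $\xi^{i+1}$ moves by exactly the same increments as that of $\xi^i$ (being driven by a translate of the same path family), and at each junction use the nestedness of the surviving ``secondary'' structure to restart with the inequality intact. The paper formalizes your ``nested feeder paths'' step precisely as the induction hypothesis $\xi^{i+1}_{\tau_{i+1,k}}\subseteq\xi^i_{\tau_{i+1,k}}$ at the junction times, together with the observation that paths which merged into the (now dead) drivers have also vanished; this set inclusion at those specific times is perfectly legitimate despite non-attractiveness, because it is a property of this explicit coupling and not a general monotonicity statement, so you need not phrase everything purely in terms of the rightmost coordinate.

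The one place where your plan goes astray is the anticipated ``main obstacle'' of non-aligned junction partitions and of deciding ``which restart happens first''. This obstacle does not exist, and the case you leave unresolved (the driver of $\xi^i$ dying while that of $\xi^{i+1}$ is still alive, at which point $\cR\xi^i$ would drop and your ``equal increments'' argument alone would not preserve the inequality) never occurs. Indeed, by construction the rightmost-site driver of $\xi^{i+1}$ on $[\tau_{i+1,j},\tau_{i+1,j+1})$ is $T^{\cR\xi^{i+1}_{\tau_{i+1,j}}-\cR\xi^{i}_{\tau_{i+1,j}}}A_{i+1,j}$, i.e.\ a leftward translate of $A_{i+1,j}$, which is itself the family of descendants of $(\cR\xi^i_{\tau_{i+1,j}},\tau_{i+1,j})$ inside the process $\xi^i$; a translate dies exactly when the original does, namely at $\tau_{i+1,j+1}$, and during the interval $\cR\xi^i_t$ coincides with the rightmost point of $A_{i+1,j}$ (no active path of $\xi^i$ can overtake the descendants of its rightmost point without first touching them). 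Hence the only partition relevant to comparing $\xi^{i+1}$ with $\xi^i$ is $\{\tau_{i+1,j}\}_j$; the times $\tau_{i,j}$ play no role in this lemma, no common refinement is needed, and once this is noted your argument reduces exactly to the paper's induction on $j$.
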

\begin{proof}
We prove by induction that, for all $j\in \N\cup\{0\}$,
\[
\cR(\xi^{i+1}_t)\leq \cR (\xi^i_t)
\]
for all $t\in [\tau_{i+1,j},\tau_{i+1,j+1})$ and
$\xi^{i+1}_{\tau_{i+1,j}} \subseteq \xi^i_{\tau_{i+1,j}}$.
For $j=0$ we obviously have
 $\xi^{i+1}_{\tau_{i+1,0}} = \xi^{i+1}_{\tau_{i+1}} \subseteq \xi^i_{\tau_{i+1}}= \xi^i_{\tau_{i+1,0}}$ and
 $\cR (\xi^{i+1}_t)= \cR (\xi^i_t)$ for all $t\in [\tau_{i+1,0},\tau_{i+1,1})$. For the inductive step assume that
 $\cR (\xi^{i+1}_t) \leq \cR (\xi^i_t)$ 
 for all 
 $t\in [\tau_{i+1,k},\tau_{i+1,k+1})$ and that
 $\xi^{i+1}_{\tau_{i+1,k}} \subseteq \xi^i_{\tau_{i+1,k}}$. At time $\tau_{i+1,k+1}$ the open paths of $\xi^i $ starting at $(\cR (\xi^i_{\tau_{i+1,k}}), \tau_{i+1,k})$ and the open paths of $\xi^{i+1}$ starting at $(\cR (\xi^{i+1}_{\tau_{i+1,k}}), \tau_{i+1,k})$
 have vanished. Hence, the open paths starting at the left of these points which have merged with the paths starting at these points have also vanished. Since the open paths of $\xi^{i+1}$ starting at $(\cR (\xi^{i+1}_{\tau_{i+1,k}}), \tau_{i+1,k})$ are to the left of 
 the open paths of $\xi^i $ starting at $(\cR (\xi^i_{\tau_{i+1,k}}), \tau_{i+1,k})$ we have $\xi^{i+1}_{\tau_{i+1,k+1}}\subseteq \xi^i_{\tau_{i+1,k+1}}$ and this implies that 
 $\cR (\xi^{i+1}_t)\leq \cR (\xi^i_t)$ 
 for all $t\in [\tau_{i+1,k+1},\tau_{i+1,k+2})$ completing the inductive step and the proof of the lemma.
\end{proof}

To simplify the notation, let
$Y_n=\cR \xi^n_{\tau_n} - \cR \xi^{n-1}_{\tau_{n-1}}$.

\begin{lemma}
\label{lemma:sta-int}
The random variables $Y_n$ are integrable and $\E Y_n =1$.
\end{lemma}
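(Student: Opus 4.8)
The plan is to identify $Y_n$, in distribution, with $1$ plus the displacement of the rightmost point of a \emph{critical} contact process started from $\mu$ over an independent exponential time, and then to compute the mean using that this edge process has stationary increments and zero asymptotic speed.

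First I would unwind the construction. The thinning that builds $\xi^{i}_{\tau_i}$ out of $\xi^{i-1}_{\tau_i}$ keeps the rightmost site, so $\cR\xi^{n}_{\tau_n}=\cR\xi^{n-1}_{\tau_n}$; since $\tau_n$ is an $\varepsilon$-mark at which $\xi^{n-1}$ performs a rightward jump (indeed $\xi^{n-1}_{\tau_n}=\Xi\xi^{n-1}_{\tau_n-}$), also $\cR\xi^{n-1}_{\tau_n}=\cR\xi^{n-1}_{\tau_n-}+1$, whence
\[
Y_n=\bigl(\cR\xi^{n-1}_{\tau_n-}-\cR\xi^{n-1}_{\tau_{n-1}}\bigr)+1 .
\]
On $[\tau_{n-1},\tau_n)$ the process $\xi^{n-1}$ sees no $\varepsilon$-mark, and by the construction the increments $\cR\xi^{n-1}_{\tau_{n-1}+s}-\cR\xi^{n-1}_{\tau_{n-1}}$, $s\in[0,\tau_n-\tau_{n-1})$, have the law of $\cR\zeta_s-\cR\zeta_0$, where $\zeta$ is the critical $(\lambda_c,\lambda_c)$ contact process with $\zeta_0\sim\mu$ (so $\cR\zeta_0=0$); recall $\Psi\xi^{n-1}_{\tau_{n-1}}\sim\mu$ by construction. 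Moreover $S:=\tau_n-\tau_{n-1}\sim\mathrm{Exp}(\varepsilon)$ is independent of that evolution, so $Y_n\overset{d}{=}\cR\zeta_S+1$, which in particular pins down the common law of the $Y_n$, consistently with the stationarity of $(Y_n)$ that the construction was designed to achieve. Verifying this identification is where the details of the coupling in the construction are actually used.

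It then remains to show $\cR\zeta_S$ is integrable with mean $0$. Every rightward jump of $\cR\zeta$ is carried by a $\lambda_c$-open edge at the current rightmost site, so the number of such jumps before time $t$ has mean at most $\lambda_c t$; hence $\E[(\cR\zeta_S)^+]\leq\lambda_c/\varepsilon<\infty$ and $\E[Y_n^+]<\infty$. Since $\Psi\zeta_t\sim\mu$ for all $t$ (Proposition~\ref{prop:existsinv}), $t\mapsto\cR\zeta_t$ has stationary increments; combining this with the zero asymptotic speed of the critical contact process started from $\mu$ — this is Proposition~\ref{prop:critinvspeedzero}, and if one prefers not to invoke it one proves it here from $\alpha(\lambda_c,\lambda_c)=0$ (Theorem~\ref{thm:critnospeed} and Corollary~\ref{cor:critint}), which via attractiveness gives $\limsup_t\cR\zeta_t/t\leq 0$, the reverse inequality being the delicate point — one gets $\E[\cR\zeta_t]=0$ for every $t$, hence $\E[\cR\zeta_S]=0$ and $\E Y_n=1$; integrability of $Y_n$ follows since $\E Y_n$ is finite and $\E[Y_n^+]<\infty$.

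I expect the main obstacle to be the control of the downward moves of $\cR\zeta$: each recovery of the current rightmost site drops $\cR\zeta$ by the size of the ``top gap'' of the current configuration, and one needs this gap to have finite $\mu$-expectation (equivalently $\E_\mu[\mathrm{gap}]=\lambda_c$) both to make $\cR\zeta_t$ integrable and to run the compensation identity $\E[\cR\zeta_t]=\bigl(\lambda_c-\E_\mu[\mathrm{gap}]\bigr)t$ that yields the nonnegative-speed half. An alternative that sidesteps a direct gap estimate is to combine the stationarity of $(Y_n)$ with the iterated inequality $\sum_{k=1}^N Y_k=\cR\xi^N_{\tau_N}\leq\cR\xi^0_{\tau_N}$ from Lemma~\ref{forbidden zones} and a forward lower bound on $\cR\xi^N_{\tau_N}$; I would pursue whichever turns out cleaner.
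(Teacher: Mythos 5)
Your proof is correct and follows essentially the same route as the paper: write $Y_n = X_n+1$ with $X_n$ distributed as the increment of $\cR\eta^\mu$ over an independent $\mathrm{Exp}(\varepsilon)$ time, then deduce $\E X_n=0$ from Proposition~\ref{prop:critinvspeedzero}. The only difference is that you spell out the integrability bookkeeping (bounding $\E[(\cR\zeta_S)^+]$ by $\lambda_c/\varepsilon$ and applying Fubini), which the paper leaves implicit.
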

\begin{proof}
Write $Y_n=X_n+1$, so $X_n$ is distributed as the increment of the position of the rightmost point of $(\eta^{\mu})_{t\geq 0}$ in the time interval $[0,\tau]$ where $\tau$ is an exponential random variable that is independent of the process $(\eta^{\mu})_{t\geq 0}$.
That $ \E X_n = 0 $ follows from Proposition~\ref{prop:critinvspeedzero}.
\end{proof}

\begin{proof}
[\proofname\ of Theorem~\ref{thm:helpout}]
We will prove that
\begin{equation}
\label{eq:liminf}
\Pb \big(\liminf_t \tfrac{\cR \xi^{\mu}_t }{t}>0 \big)>0
,
\end{equation}
and then discuss how to get Theorem~\ref{thm:helpout} therefrom.
Remark~\ref{rmk:abitmore} requires extra work and is handled in the next section.

From the definition of $Y_k$, we have
\[
\cR \xi^n_{\tau_{n}}
=
\sum_{k=1}^{n} Y_k
\]
Recall that $ (Y_n)_n $ is stationary.
By the Ergodic Theorem, $\frac {\cR (\xi^n_{\tau_{n}})}{n} \to Y$ a.s., for some (possibly random) $Y$.
Moreover, by Lemma~\ref{lemma:sta-int}, $\E Y=1$ .
Hence,
\[
\Pb \big(\lim_n \tfrac {\cR (\xi^n_{\tau_{n}})}{n}>0 \big) > 0
.
\]
On the other hand, by Lemma~\ref{forbidden zones}, we have $\cR \xi^\mu_{\tau_{n}} \geq \cR \xi^n_{\tau_{n}}$,
whence
\[
\Pb \big(\liminf_n \tfrac {\cR (\xi^\mu_{\tau_{n}})}{n}>0 \big) > 0
.
\]
Since $\sup_{t\in [\tau_n,\tau_{n+1}]}(\cR (\xi^\mu_{\tau_{n+1}})-\cR (\xi^\mu_t))$ is bounded above by a geometric random variable with parameter $\frac {\varepsilon}{\lambda_c+\varepsilon}$, using a simple Borel-Cantelli argument and $\frac{\tau_n}{n} \to \varepsilon^{-1}$ a.s.,\ we can conclude~\eqref{eq:liminf}.

Now, consider a process $(\zeta_t)_t$ that has rate $\lambda_e+\varepsilon$ for infections to the right of the rightmost infected site and $\lambda_i$ everywhere else.
We claim that $\Pb(\zeta^0_t\ne \emptyset \, \forall t\geq 0) > 0$.
Indeed, if this probability were zero then there would exist a diverging sequence of random times $t_n$ such that $\xi^{\mu}_{t_n} = \eta^{\mu}_{t_n}$ where
$\eta^{\mu}$ is the process with parameters $(\lambda_c,\lambda_c)$, and, since $\lim \frac{\cR \eta^\mu_t}{t}=0$ a.s.\ by Proposition~\ref{prop:critinvspeedzero}, this would imply that
$\liminf_t \frac{\cR \xi^\mu_t}{t}=0$ a.s.

To get Theorem~\ref{thm:helpout} from the above claim, note that, although a direct comparison between $(\zeta^0_t)_t$ and $(\eta^0_t)_t$ using the same graphical construction may break down due to lack of attractiveness, we can compare the probability that these processes survive using Lemma~2.2 of~\cite{DurrettSchinazi00}, thus proving Theorem~\ref{thm:helpout}.
\end{proof}

\section{Survival of the critical contact process on $\N$ with enhancement at the boundary}
\label{sec:survn}

Continuing from the previous section, we now bootstrap from the intermediate results used in the proof of Theorem~\ref{thm:helpout} so as to finally justify Remark~\ref{rmk:abitmore}.

Note that survival of the process $(\zeta_t)_t$ on $\Z$ is a weaker version of Remark~\ref{rmk:abitmore}, differing only on the underlying physical space being $\Z$ instead of $\Z_+$.
So it remains to show that, with positive probability, the process survives even if the negative half-line is shut down.

Let us make this approach more precise.
On the event $\{\zeta^0_t \ne \emptyset \ \forall t\geq 0\}$, define the random path $(\Gamma(t))_{t \geq 0}$ as the rightmost infinite path started from $(0,0)$.
That is, take
\[
\Gamma(t)=\max\{x \in \Z: (0,0)
\text{ infects }
(x,t) \text{ and } (x,t)
\text{ infects }
\infty \}
,
\]
which corresponds to an infinite infection path in the graphical construction, and is such that any other infinite infection path $\gamma$ starting from $\gamma(0)=0$ satisfies $\gamma(t)\leq \Gamma(t)$ for all $t\geq 0$.
Then our goal is to show that $\Pb(\Gamma(t) \geq 0 \ \forall t)>0$, since on this event the process survives regardless of the infections taking place on the negative half-line, which can be completely suppressed.

\begin{figure}[b!]
\hfil
\includegraphics[page=1,width=.95\textwidth]{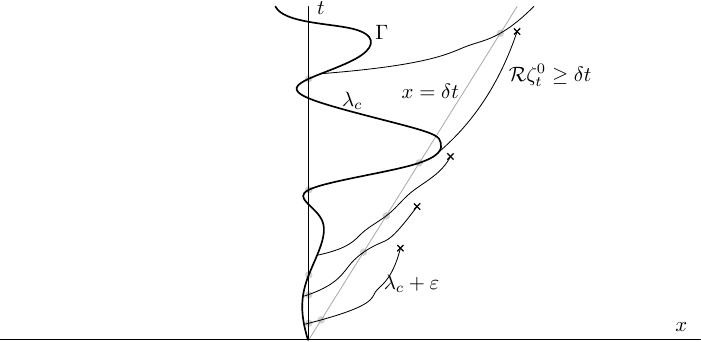}
\caption{%
By definition of $\Gamma$, every infection path to the right of $\Gamma$ is finite. Hence, in order to keep pushing the boundary so that $\cR \zeta^0_t \geq \delta t$ for all large $t$, infinitely many disjoint paths must start at $\Gamma$ and cross the line $x = \delta t$.
Since $\Gamma_t \leq 0$ infinitely many times, connecting pieces of $\Gamma$ and these disjoint paths will lead to infinitely many points $(0,t)$ which are connected to the line $x = \delta t$ via an infection path.
Since $\cR \zeta^0_t \geq \delta t$, the infection rate to the left of the line $x = \delta t$ is always $\lambda_c$, so these infection paths must be $\lambda_c$-open.
}
\label{fig:imcrossings}
\end{figure}

In the previous section we actually showed that $\{\zeta^0_t \ne \emptyset \ \forall t\geq0\} \cap \{\liminf_t \tfrac{\cR \xi^{\mu}_t }{t}>0\}$ has positive probability.
Now notice that, on the event $\{\zeta^0_t \ne \emptyset \ \forall t\geq0\}$, we have $\cR \zeta^0_t = \cR \xi^\mu_t  \ \forall t\geq0$, so combining these events we get
\[
\Pb( \cR \zeta^0_t \geq \delta t \ \forall t \geq T_0 ) > 0
\]
for some $T_0<\infty$ and some $\delta>0$.

So it is enough to show that the event $\{\cR \zeta^0_t \geq \delta t \ \forall t \geq T_0\} \cap \{\Gamma(t) \leq 0 \text{ i.o.}\}$ has probability zero.
Note that, on this event, there exists an increasing diverging sequence $(T_n)_n$ such that there are $\lambda_c$-open paths from $(0,T_n)$ to the line $\{(\lceil\delta t\rceil,t):t \geq 0\}$, see Figure~\ref{fig:imcrossings}.

To finish the proof of Remark~\ref{rmk:abitmore}, it remains to show that the latter event (infinitely many crossings of a cone by $\lambda_c$-open paths) has probability zero.

\begin{lemma}
Consider the critical contact process on $\Z$ and let $\delta > 0$.
The probability that there are infinitely many $\lambda_c$-open paths from $\{(0,t):t\geq 0\}$ to $\{(\lceil\delta t\rceil,t):t \geq 0\}$ is zero.
\end{lemma}
\begin{proof}
From Lemma~2 of~\cite{MountfordSweet00}, we get $0<c_1<C_1<\infty $ such that
\begin{equation}
\label{eq:expdecay1}
\Pb( \sup_{s \in [0,t]} \cR\eta^-_t \geq \delta t ) \leq C_1 e^{-c_1 t}
\end{equation}
for all $t \geq 0$.
Indeed, take $\lambda>\lambda_c$ such that $\alpha(\lambda,\lambda)<\delta$ and apply~\cite[Lemma~2]{MountfordSweet00} directly.

From~\eqref{eq:expdecay1}, we get
\[
\Pb( \sup_{s \in [t,2t]} \tfrac{\cR\eta^-_s}{s} \geq \delta ) \leq C_2 e^{-c_2 t}
\]
for all $t>0$ for new constants $0 < c_2 < C_2 < \infty$ (all constants depend on $\delta$).

Fix $t>0$.
Taking a union bound, the previous estimate gives
\[
\Pb( \sup_{s \geq t} \tfrac{\cR\eta^-_s}{s} \geq \delta )
\leq
\sum_{k \geq 1}
\Pb( \sup_{s \in [kt,2kt]} \tfrac{\cR\eta^-_s}{s} \geq \delta )
\leq
\sum_{k \geq 1}
C_2 e^{-c_2 k t}
\leq
C_3 e^{-c_3 t}
.
\]
Combining the previous estimates, we get
\begin{equation}
\label{eq:expdecay2}
\Pb( \sup_{s \geq 0} \tfrac{\cR\eta^-_s}{s+t} \geq \delta )
\leq
C_4 e^{-c_4 t}
,
\end{equation}
which is valid for all $t>0$.
We only need another tedious estimate, and the proof will be finished.

Fix $T>0$.
Let $A_T$ denote the event that there is a path from $\{0\}\times[T,+\infty)$ to $\{(\lceil\delta t\rceil,t):t \geq 0\}$.
Denote the recovery marks on $\{0\}\times[T,+\infty)$ by
$T+\tau_1,T+\tau_1+\tau_2,T+\tau_1+\tau_2+\tau_3,\dots$, and notice that $(\tau_n)_{n\in\N}$ are i.i.d.\ standard exponential variables.
Observe that $A_T \subseteq \cup_{n \in \N_0} A_{T,n}$, where $A_{T,n}$ is the event that there is a $\lambda_c$-open path from $T+\sum_{k=1}^n \tau_k$ to some $(x,t)$ with $ x \geq \delta t$.
Let $\beta = \E[e^{-c_4 \tau_1}] < 1$.
Using~\eqref{eq:expdecay2}, 
we get
\[
\Pb(A_{T,n})
=
\E[\Pb(A_{T_n}|\tau_1,\dots,\tau_n)]
\leq
\E[C_4 \exp(-c_4(T+\tau_1+\dots+\tau_n))]
\leq
C_4 e^{-c_4 T} \beta^n
,
\]
and thus
\[
\Pb(A_T)
\leq
\sum_{n=0}^\infty
\Pb(A_{T,n})
\leq
\frac{C_4 e^{-c_4 T}}{1-\beta}.
\]
Since this vanishes as $T \to \infty$, the lemma is proved.
\end{proof}
Using the above lemma, we conclude that $\Pb(\Gamma(t) \geq 0 \ \forall t)>0$, proving Remark~\ref{rmk:abitmore}.

\section{No survival for the critical contact process on $\N$ with finite enhancements}
\label{sec:zhang}
In this section we prove Theorem~\ref{thm:fixenhancement}.

The proof is by contradiction.
Let $k\in\N$, $\lambda<\infty$ and $\delta>0$.
Let $\zeta^0_{t;k,\lambda,\delta}$ denote the contact process on $\Z_+$ started from $\zeta_0=\{0\}$, having infection rate $\lambda$ and recovery rate $\delta$ at sites $\{0,1,\dots,k\}$, and infection rate $\lambda_c$ and recovery rate $1$ at sites $\{k+1,k+2,\dots\}$.
Suppose by contradiction that
$\Pb(\zeta^0_{t;k,\lambda,\delta}\ne\emptyset \,\forall t\geq 0)>0$.
Fix some $0 < \delta' < \delta'' < \delta$ and $\lambda' = \lambda + (\delta-\delta'')/2 < \infty$.
By defining both processes on the same graphical construction, we have
$\zeta^0_{t;k,\lambda',\delta'} \supseteq \zeta^0_{t;k,\lambda,\delta}$,
and therefore 
$\Pb(\zeta^0_{t;k,\lambda',\delta'}\ne\emptyset \,\forall t\geq 0)>0$.

We will now produce a new graphical construction with parameters $\delta$ and $\lambda$ from the graphical construction with parameters $\delta'$ and $\lambda'$, using extra randomness to resample some of the marks.
On this new graphical construction, we will define a process $(\xi_t)_{t\geq 0}$, which is thus distributed as $(\zeta^0_{t;k,\lambda,\delta})_{t \geq 0}$.
At each site $x\in\{0,\dots,k\}$, each $\lambda'$-infection mark from $x$ to $x\pm 1$ is converted into a recovery mark at $x$ with probability $1-\lambda/\lambda'$, independently for each mark; moreover, an independent Poisson clock of rate $\delta''-\delta'$ is added as new recovery marks.
This new graphical construction has the same distribution as the graphical construction with parameters $k$, $\lambda$ and $\delta$, and we use it to define the process $(\xi_t)_t$.
Below we show that $\Pb(\xi_t\ne\emptyset \, \forall t\geq0)=0$, contradicting the assumption that $\Pb(\zeta^0_{t;k,\lambda,\delta}\ne\emptyset \,\forall t\geq 0)>0$.


On the event $\{\zeta^0_{t;k,\lambda',\delta'} \ne \emptyset \,\forall t\geq 0\}$, let $\Gamma$ be the rightmost infinite open path started from $(0,0)$.
More precisely, and explicitly, take
\[
\Gamma(t)=\max\{x \in \Z_+: (0,0)
\underset{(k,\lambda',\delta')}{\rightsquigarrow}
(x,t) \text{ and } (x,t)
\underset{(k,\lambda',\delta')}{\rightsquigarrow}
\infty \}
,
\]
which corresponds to an infinite open path in the graphical construction, and is such that any other infinite open path $\gamma$ starting from $\gamma(0)=0$ satisfies $\gamma(t)\leq \Gamma(t)$ for all $t\geq 0$.
In the above definition, we are using the notation $(x,t) {\rightsquigarrow}_{(k,\lambda',\delta')} (y,s)$ to denote existence of a path from $(x,t)$ to $(y,s)$ in the model that uses parameters $(k,\lambda',\delta')$.

Now observe that, almost surely, $\Gamma(t)\leq k$ for arbitrarily large times, since otherwise we would have an infinite open path in the region $\{k+1,k+2,\dots\} \times [0,+\infty)$, contradicting~\cite{BezuidenhoutGrimmett90}.
So there is a divergent sequence of times $(t_n)_n$ such that $\Gamma(t_n) \leq k$ for all $n$.
We can assume that $t_{n+1}>t_n+1$ for all $n$.

Finally, let $\mathcal{A}_n$ denote the event that, for each site $x\in\{0,\dots,k\}$, during the time interval $[t_n,t_{n+1}]$ a recovery mark is added and the first infection mark (if there is one) is converted into a recovery mark.
The events $(\mathcal{A}_n)_n$ are conditionally independent given the original graphical construction, and have (conditional) probability at least $((1-\lambda/\lambda')(1-e^{\delta'-\delta''}))^{k+1}>0$.
In particular, a.s.\ at least one such event will occur.
Now observe that the occurrence of $\mathcal{A}_n$ for some $n$ is enough to simultaneously break every infinite open path that starts from $(0,0)$.
We have thus proved that $\Pb(\xi_t\ne\emptyset \, \forall t\geq0)=0$ as announced above, contradicting the assumption that this probability is positive.
This proves Theorem~\ref{thm:fixenhancement}.

\section{Open problems}
\label{sec:openproblems}

\begin{enumerate}
\item
As already said in Remark~\ref{rmk:zhang2}, it is not known whether the contact process on $\Z$ with parameter $\lambda_c$ dies out if we increase the rate of infection emanating from a fixed site.
\item
The proofs of the asymptotic speed results given in Section~\ref{sec:speedetc} depend on the Subadditive Ergodic Theorem. Do they still hold when $\lambda_i < \lambda_e$? The difficulty is due to the loss of attractiveness of the process.
\item
Does the conclusion of Proposition~\ref{prop:existsinv} still hold when $\lambda_i=\lambda_c< \lambda_e$?
\end{enumerate}

\begin{acks}
This paper was written while the first author was visiting IMPA in Rio de Janeiro. Thanks are given for its hospitality.
L.R. has been supported by FAPESP grants 2023/13453-5 and 2025/27064-6 and CNPq grant 408590/2024-6.
\end{acks}


%
%
%
%

%
%

\end{document}